\documentclass[a4paper,12pt,reqno]{amsart}
\usepackage{amssymb}
\usepackage{amsmath}
\usepackage{graphics}
\usepackage{color}
\usepackage{multirow}
\usepackage{pstricks}
\usepackage{graphicx,verbatim,psfrag,mathdots}
\usepackage[colorlinks=true]{hyperref}
\usepackage{ytableau}
\usepackage{mathabx}
\usepackage{rotating}
\usepackage{musicography}

\graphicspath{{figs/}}

\textwidth17cm
\oddsidemargin0cm
\evensidemargin0cm

\usepackage{tikz}
\usetikzlibrary{arrows}

\newcommand{\swne}{`` \tikz \draw (0,0) -- (.6,.3); ''}
\newcommand{\senw}{`` \tikz \draw (.6,0) -- (0,.3); ''}
\newcommand\mystrut{\rule{0pt}{8pt}}

\newcommand{\floor}[1]{\lfloor #1 \rfloor}

\newcommand{\x}{\bar{x}}
\newcommand{\ws}[1]{\operatorname{w}_{\operatorname{sp}}(#1)}
\newcommand{\oo}{so^{\mathrm{odd}}}
\renewcommand{\oe}{o^{\mathrm{even}}}
\renewcommand{\sp}{sp}
\newcommand{\woo}[1]{\operatorname{w}_{\operatorname{so}}(#1)}
\newcommand{\woe}[1]{\operatorname{w}_{\operatorname{o}}^{\operatorname{even}}(#1)}
\newcommand{\T}{\operatorname{T}}
\newcommand{\ST}{\operatorname{ST}}
\newcommand{\DT}{\operatorname{DT}}
\newcommand{\SDT}{\operatorname{SDT}}
\newcommand{\htm}[2]{\operatorname{HT\text{--}}^{#1}_{#2}}
\newcommand{\hhtm}[2]{\widehat{\operatorname{HT\text{--}}}^{#1}_{#2}}
\newcommand{\htp}[2]{\operatorname{HT+}^{#1}_{#2}}
\newcommand{\hhtp}[2]{\widehat{\operatorname{HT+}}^{#1}_{#2}}

\newcommand{\n}{\operatorname{n}}
\newcommand{\m}{\operatorname{M}}

\newcommand{\w}{\operatorname{w}}

\newcommand{\sgn}{\operatorname{sgn}}

\newcommand{\id}{\operatorname{Id}}

\renewcommand{\nu}{\operatorname{nu}}

\newtheorem{thm}{Theorem}
\newtheorem{lem}[thm]{Lemma}

\newtheorem{prop}[thm]{Proposition}
\newtheorem{rem}[thm]{Remark}
\newtheorem{defi}[thm]{Definition}

\newtheorem{example}[thm]{Example}

\numberwithin{equation}{section}
\numberwithin{thm}{section}

\DeclareMathOperator{\OP}{\mathcal{OP}}
\DeclareMathOperator{\SP}{\mathcal{SP}}
\DeclareMathOperator{\SOP}{\mathcal{SOP}}
\DeclareMathOperator{\SPT}{\mathcal{SPT}}
\DeclareMathOperator{\EOT}{\mathcal{EOT}}
\DeclareMathOperator{\OOT}{\mathcal{OOT}}

\begin{document}

\title{Bijective proofs of skew Schur polynomial factorizations}

\author[Arvind Ayyer]{Arvind Ayyer}
\address{Arvind Ayyer, Department of Mathematics, Indian Institute of Science, Bangalore - 560012, India}
\email{arvind@iisc.ac.in}
\author[Ilse Fischer]{Ilse Fischer}
\address{Ilse Fischer, Fakult\"{a}t f\"{u}r Mathematik, Universit\"{a}t Wien, Oskar-Morgenstern-Platz 1, 1090 Wien, Austria}
\email{ilse.fischer@univie.ac.at}

\begin{abstract} 
In a recent paper, Ayyer and Behrend present for a wide class of partitions factorizations of Schur polynomials with an even number of variables where half of the variables are the reciprocals of the others into symplectic and/or orthogonal group characters, thereby generalizing results of Ciucu and Krattenthaler for rectangular shapes. Their proofs proceed by manipulations of determinants underlying the characters. The purpose of the current paper is to provide bijective proofs of such factorizations. The quantities involved have known combinatorial interpretations in terms of Gelfand-Tsetlin patterns of various types or half Gelfand-Tsetlin patterns, which can in turn be transformed into perfect matchings of weighted trapezoidal honeycomb graphs. An important ingredient is then Ciucu's theorem for graphs with reflective symmetry. However, before being able to apply it, we need to employ a certain averaging procedure in order to achieve symmetric edge weights. This procedure is based on a ``randomized'' bijection, which can however also be turned into a  classical bijection. For one type of Schur polynomial factorization, we also need an additional graph operation that almost doubles the underlying graph. Finally, our combinatorial proofs reveal that the factorizations under consideration can in fact also be generalized to skew shapes as discussed at the end of the paper.
\end{abstract}

\maketitle

\section{Introduction}

\emph{Schur polynomials} $s_\lambda(x_1,\ldots,x_n)$ are central objects in algebraic combinatorics with various beautiful properties and numerous applications. In representation theory, they are the irreducible characters of polynomial representations of the \emph{general linear group} $GL_n(\mathbb{C})$. Ayyer and Behrend \cite[Theorem~1]{AyyBeh18} showed that for two families of partitions, Schur polynomials with $2n$ variables factorize into characters of other classical groups when specializing such that half of the variables are the reciprocals of the others. The two families of partitions are 
\begin{equation}(\lambda_1+1,\lambda_2+1, \dots, \lambda_n + 1, 
 - \lambda_n,-\lambda_{n-1},\dots, - \lambda_1)+ \lambda_1 \quad 
\end{equation} 
and 
\begin{equation}
(\lambda_1, \dots, \lambda_n, 
- \lambda_n, \dots, - \lambda_1)+\lambda_1,
\end{equation}
where $\lambda=(\lambda_1,\ldots,\lambda_n)$ is a partition, allowing here and throughout the whole paper zero parts in a partition and ``$+\lambda_1$'' means that we add $\lambda_1$ to each part.
An illustration of the families is provided in Figure~\ref{part}.
\begin{figure}
\scalebox{0.4}{
\psfrag{n}{\Large$n$}
\psfrag{L}{\Huge$\lambda$}
\psfrag{L1}{\Large$\lambda_1$}
\psfrag{L2}{\Huge\begin{turn}{180}$\lambda$ \end{turn}}
\psfrag{1}{\Large$1$}
\includegraphics{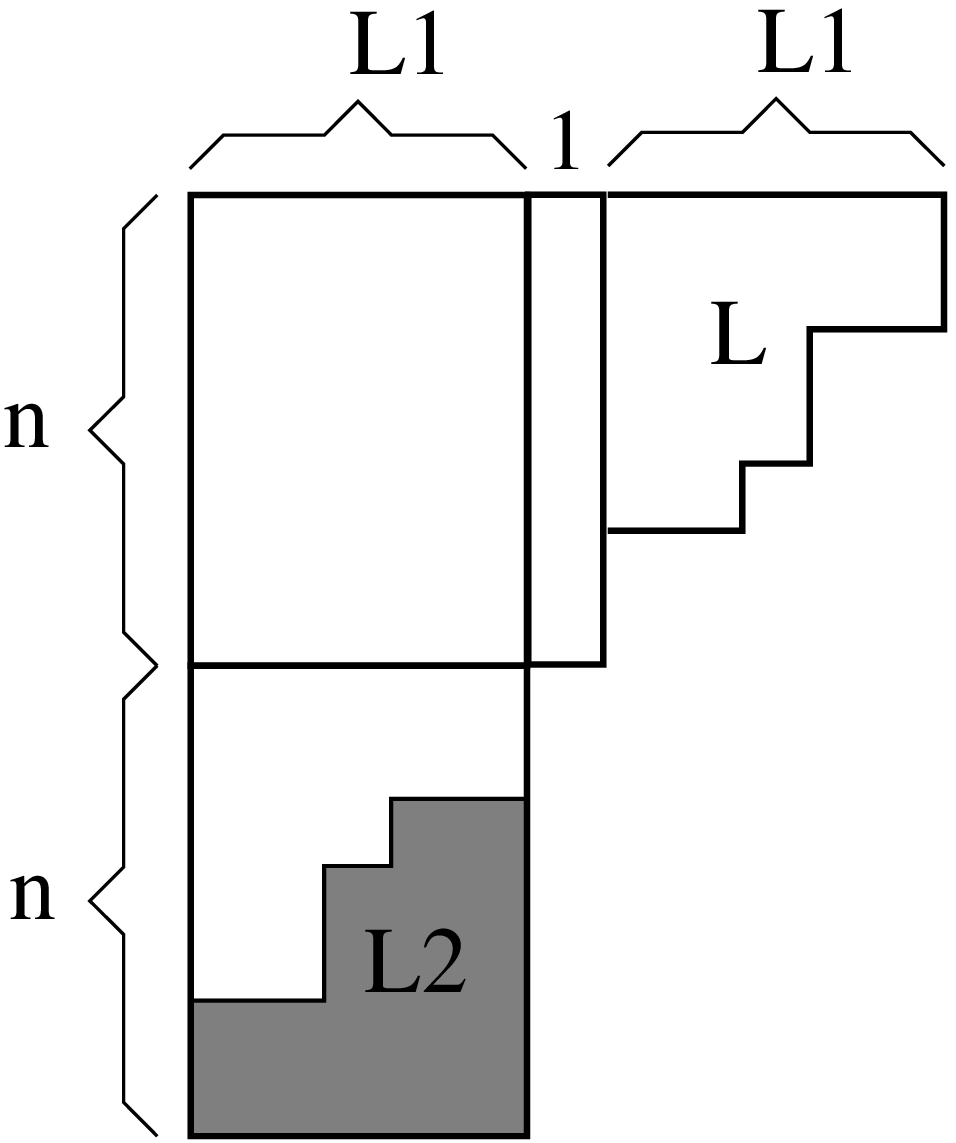}}
\hspace{2cm}
\scalebox{0.4}{
\psfrag{n}{\Large$n$}
\psfrag{L}{\Huge$\lambda$}
\psfrag{L1}{\Large$\lambda_1$}
\psfrag{L2}{\Huge\begin{turn}{180}$\lambda$ \end{turn}}
\psfrag{1}{\Large$1$}
\includegraphics{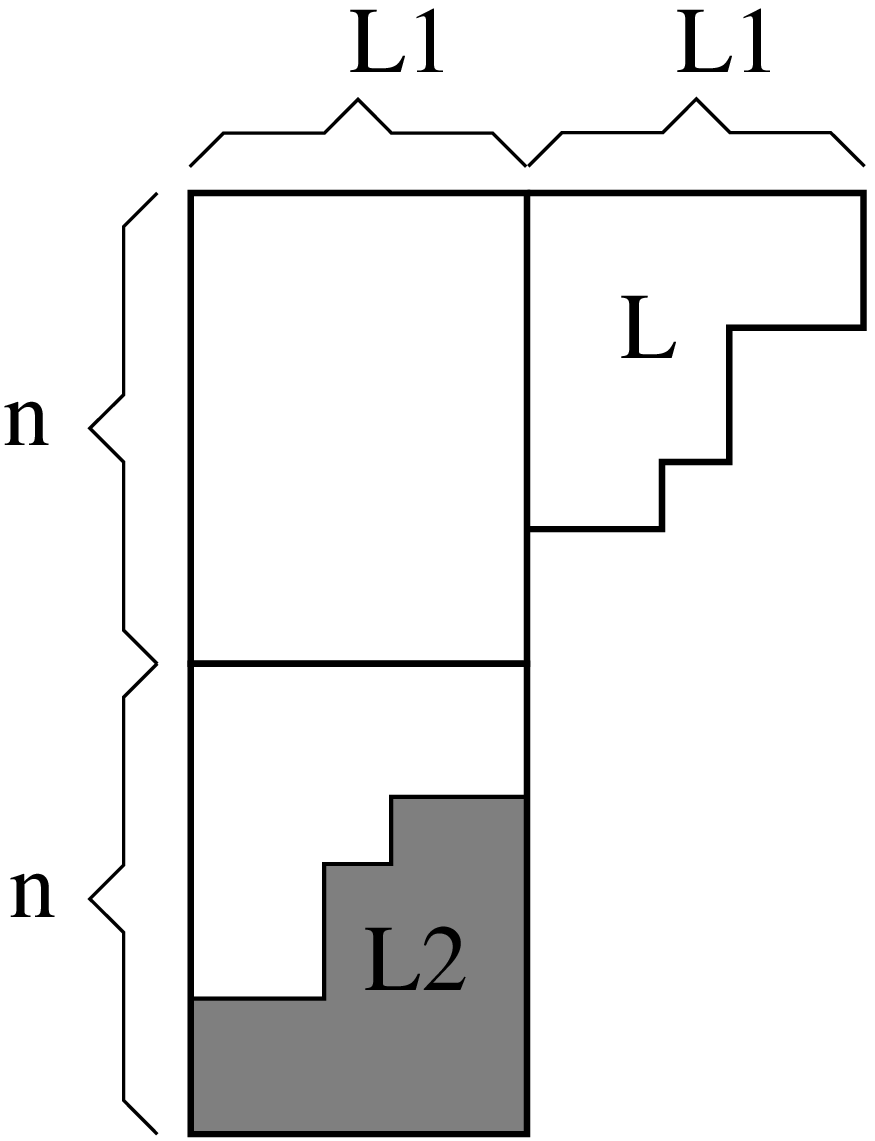}}
\caption{\label{part} The two families of partitions}
\end{figure}

In order to state the precise result, we use the following standard notation for characters of classical groups. Let $n$ be a positive integer and $\lambda=(\lambda_1,\ldots,\lambda_n)$ be a partition with at most $n$ non-zero parts, adding trailing zeros if necessary. Recall the following well-known formula for Schur polynomials.
\begin{equation}
s_{\lambda}(x_1,\ldots,x_n) = \frac{\det_{1 \le i,j \le n} \left( x_i^{\lambda_j+n-j} \right)}{\det_{1 \le i,j \le n} \left( x_i^{n-j} \right)}
\end{equation} 
The other characters appearing in this paper are the following. Throughout the article, we set $\bar{x} = x^{-1}$.
\begin{itemize} 
\item  {\em Symplectic characters} are the irreducible characters of the \emph{symplectic group} $Sp_{2n}(\mathbb{C})$, and they are given by 
\begin{equation}
sp_{\lambda}(x_1,\ldots,x_n)  = \frac{\det_{1 \le i,j \le n} \left( x_i^{\lambda_j+n-j+1} - \bar x_i^{\lambda_j+n-j+1}   \right)}{\det_{1 \le i,j \le n} \left( x_i^{n-j+1} - \bar x_i^{n-j+1}  \right)}. \end{equation} 
\item {\em Even orthogonal characters}  are the irreducible characters of the \emph{even orthogonal group} $O_{2n}(\mathbb{C})$, and they are given by 
\begin{equation}
\oe_\lambda(x_1,\ldots,x_n)  = (1 + [\lambda_n \neq 0]) \frac{\det_{1 \le i,j \le n} \left( x_i^{\lambda_j+n-j} + \bar x_i^{\lambda_j+n-j}   \right)}{\det_{1 \le i,j \le n} \left( x_i^{n-j} + \bar x_i^{n-j}  \right)},
\end{equation}
where we use the Iverson bracket, i.e., $[ S ]$ is $1$ if $S$ is true and $0$ if $S$ is false.
 \item {\em Odd orthogonal characters} are the irreducible characters of the \emph{special odd orthogonal group} $SO_{2n+1}(\mathbb{C})$, and they are given by
 \begin{equation}
 \oo_\lambda(x_1,\ldots,x_n)  = \frac{\det_{1 \le i,j \le n} \left( x_i^{\lambda_j+n-j+1/2} - \bar x_i^{\lambda_j+n-j+1/2}   \right)}{\det_{1 \le i,j \le n} \left( x_i^{n-j+1/2} - \bar x_i^{n-j+1/2}  \right)}, 
 \end{equation}  
 where $\delta$ is the Kronecker delta.
\end{itemize}
A \emph{half-integer} is an odd integer divided by $2$. A \emph{half-integer partition} is a finite weakly decreasing sequence of positive half-integers. In the case of the even orthogonal group, the character formula has a representation theoretic meaning as characters of spin covering groups when $\lambda$ is a half-integer partition. For more information see, e.g., the book by Fulton and Harris  \cite[Chap. 24]{FulHar91}.

The starting point for the research presented in this paper was to provide a combinatorial proof of the following theorem, which appeared in \cite[Theorem~1]{AyyBeh18} in a slightly different but equivalent form as also explained there: The case of \cite[Theorem~1]{AyyBeh18} in which all parts of $\lambda$ are half-integers corresponds to the first part of the theorem below  (see also \cite[Equation (19)]{AyyBeh18}), while the case in which all parts of $\lambda$ are integers corresponds to the second part
 (see also \cite[Equation (18)]{AyyBeh18}).

For a partition $\lambda$ and an integer or half-integer $\ell$, we denote by $\lambda + \ell$ the tuple obtained by adding $\ell$ to each part of $\lambda$, and we set
$\bar x = x^{-1}$.

\begin{thm}
\label{thm:fact1}
Let $n$ be a positive integer and $\lambda = (\lambda_1,\dots,\lambda_n)$ be a partition.
\begin{enumerate}
\item \label{item:part1}
For 
\begin{equation}
\widehat{\lambda} = (\lambda_1+1,\lambda_2+1, \dots, \lambda_n + 1, 
 - \lambda_n,-\lambda_{n-1},\dots, - \lambda_1)+ \lambda_1,
\end{equation}
we have 
\begin{equation}
\label{toprove}
s_{\widehat{\lambda}}(x_1,\x_1, \ldots,x_{n},\x_{n})=
\sp_{\lambda}(x_1,\ldots,x_{n}) \: \oe_{\lambda+1}(x_1,\ldots,x_{n}).
\end{equation}
\item  \label{item:part2}

For
\begin{equation}
\widehat{\lambda} = (\lambda_1, \dots, \lambda_n, 
- \lambda_n, \dots, - \lambda_1)+\lambda_1,
\end{equation}
we have 
\begin{equation}
\label{toprove2}
s_{\widehat{\lambda}}(x_1,\x_1,\ldots,x_{n},\x_{n})= \prod_{i=1}^{n} (x_i^{1/2}+\x_i^{1/2})^{-1} 
\oo_{\lambda}(x_1,\ldots,x_{n}) \: \oe_{\lambda+\frac{1}{2}}(x_1,\ldots,x_{n}).
\end{equation}
\end{enumerate}
\end{thm}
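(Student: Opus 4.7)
The plan is to give both identities the same combinatorial backbone: express every character appearing in \eqref{toprove} and \eqref{toprove2} as a weighted generating function of perfect matchings of a trapezoidal honeycomb graph, and then derive the factorization from Ciucu's factorization theorem for graphs with reflective symmetry. Concretely, I would first rewrite $s_{\widehat{\lambda}}(x_1,\bar x_1,\ldots,x_n,\bar x_n)$, $\sp_\lambda(x_1,\ldots,x_n)$, $\oo_\lambda(x_1,\ldots,x_n)$, and $\oe_\lambda(x_1,\ldots,x_n)$ via their standard Gelfand--Tsetlin (respectively half-Gelfand--Tsetlin) pattern interpretations. Each pattern model is in bijection with perfect matchings of a weighted trapezoidal honeycomb graph whose columns carry the variables $x_i$ (or $x_i,\bar x_i$) in a prescribed way, so both identities reduce to matching-polynomial identities for explicit weighted graphs.

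For the Schur side, the reciprocal pairing $(x_i,\bar x_i)$ should induce a vertical reflective symmetry of the ambient honeycomb, and the characters on the right should correspond to the matching polynomials of the two ``halves''. The obstruction to invoking Ciucu's theorem directly is that the edge weights coming from the Gelfand--Tsetlin model are not literally symmetric under the reflection: a pair of mirror edges typically carries weights that differ by a residual monomial. My plan to overcome this is to apply a weight-averaging step that redistributes the weight around small cycles of the honeycomb \emph{without} changing the total weighted sum of perfect matchings, so that the resulting graph carries a genuinely reflection-invariant weight function. This averaging is what I expect to be the technical heart of the argument; the wording of the abstract suggests it is most naturally phrased as a ``randomized'' bijection, which can then be derandomized into a classical bijection.

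Once symmetric weights are in place, Ciucu's theorem yields a factorization of the matching polynomial of the Schur honeycomb as a product of matching polynomials of the two reflected halves. It then remains to identify those halves with the symplectic and orthogonal honeycombs, accounting for the shifts $\lambda\mapsto\lambda+1$ (in part \ref{item:part1}) and $\lambda\mapsto\lambda+\tfrac{1}{2}$ (in part \ref{item:part2}) that arise from the parities of the vertex rows along the reflection axis. Part \ref{item:part1} should drop out of this scheme directly. For part \ref{item:part2}, the prefactor $\prod_{i=1}^{n}(x_i^{1/2}+\bar x_i^{1/2})^{-1}$ signals that the natural honeycomb does not split evenly across the axis, and I would handle this by an auxiliary graph operation that ``almost doubles'' the honeycomb, as hinted in the abstract, thereby absorbing the denominator into a larger graph whose matching polynomial factors cleanly under the same averaging-plus-Ciucu strategy. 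The principal difficulty, as already indicated, will be designing the averaging bijection so that it is simultaneously compatible with the integer and half-integer shifts, the half-Gelfand--Tsetlin boundary conditions, and, for part \ref{item:part2}, the doubling operation.
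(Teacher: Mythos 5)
Your proposal follows essentially the same route as the paper: Gelfand--Tsetlin/half-pattern models for all four characters turned into honeycomb matching generating functions, a randomized-bijection weight symmetrization, Ciucu's factorization theorem for graphs with reflective symmetry, and an ``almost doubling'' graph operation to handle part (\ref{item:part2}). Be aware, though, that what you have written is the strategy (essentially the abstract) rather than a proof: the averaging step --- which the paper realizes not by redistributing weight around small cycles but by reflecting maximal matched ``sections'' of each odd row on a randomly chosen side of the symmetry line and summing over the $2^n$ substitutions $x_i\leftrightarrow\bar x_i$ --- the involution needed when odd starters vanish, and the hexagon-replacement rule for the doubling are exactly the constructions you defer, as is the identification of the two Ciucu halves with $\sp_{\lambda}$ and $\oe_{\lambda+1}$ (resp.\ $\oo_{\lambda}$ and $\oe_{\lambda+\frac{1}{2}}$) via half-trapezoidal graphs and half-pattern models.
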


Special cases of this theorem were known earlier; those for rectangular shapes in \cite{CiuKra09} and those for double-staircase shapes were announced in \cite{AyyBehFis16,BehFisKon17}. The known proofs of Theorem ~\ref{thm:fact1} and its special cases all proceed by manipulations of determinants underlying the characters. In this paper, we interpret the characters as generating functions of Gelfand-Tsetlin patterns, or, equivalently, as matching generating functions of edge-weighted subgraphs of the hexagonal grid, which makes it possible to provide a combinatorial proof. The partitions 
$\widehat{\lambda}$ in Theorem~\ref{thm:fact1} are chosen in such a way that the graphs employ a vertical symmetry, which suggests the use of Ciucu's factorization theorem for graphs with reflective symmetry \cite{Ciu97}. However, since the edge weights of the graphs are not symmetric, we need to ``symmetrize'' the weights. This is accomplished by a certain averaging procedure. This procedure is most conveniently explained by what we call a ``randomized'' bijection (but it  can also be turned into a classical bijection). This procedure suffices to fully deal with the identity in Theorem~\ref{thm:fact1}(\ref{item:part1}). In the case of Theorem~\ref{thm:fact1}(\ref{item:part2}), this procedure results in graphs with symmetric edge weights except for edges incident with the symmetry axis. We resolve this problem by applying a certain graph operation that, in a sense, almost doubles the graph and then Ciucu's factorization theorem is applicable also in this case.

A merit of bijective proofs is often that they reveal more about the relation between two types of objects than ``just'' the fact that they are counted by the same numbers.
In our case, it actually reveals quite naturally the following generalization of Theorem~\ref{thm:fact1} to skew shapes. This was not obvious from the previous proofs as they are based on determinantal formulas for the group characters that do not generalize to skew shapes. For the definition of skew symplectic characters and of skew orthogonal characters as well as a discussion of their appearance in the literature as restrictions of straight characters to certain subgroups, we defer to Section~\ref{sec_skew}.

\begin{thm}
\label{skew_theo}
Let $m,n$ be non-negative integers with $m < n$, and $\mu=(\mu_1,\ldots,\mu_{m})$ and $\lambda=(\lambda_1,\ldots,\lambda_n)$ be partitions. 
\begin{enumerate} 
\item For 
\begin{align}
\widehat{\mu} &= (\mu_1+1,\mu_2+1, \dots, \mu_{m} + 1, 
 - \mu_{m},-\mu_{m-1},\dots, - \mu_1)+ \lambda_1, \\ 
\widehat{\lambda} & = (\lambda_1+1,\lambda_2+1, \dots, \lambda_n + 1, 
 - \lambda_n,-\lambda_{n-1},\dots, - \lambda_1)+ \lambda_1,
\end{align}
we have 
\begin{equation}
\label{toshow_skew}
s_{\widehat{\lambda}/\widehat{\mu}}(x_1,\x_1, \ldots,x_{n-m},\x_{n-m})=
\sp_{\lambda/\mu}(x_1,\ldots,x_{n-m}) \: \oe_{(\lambda+1)/(\mu+1)}(x_1,\ldots,x_{n-m}).
\end{equation}
\item For
\begin{align}
\widehat{\mu} &= (\mu_1,\mu_2, \dots, \mu_{m}, 
 - \mu_{m},-\mu_{m-1},\dots, - \mu_1)+ \lambda_1, \\ 
\widehat{\lambda} & = (\lambda_1,\lambda_2, \dots, \lambda_n, 
 - \lambda_n,-\lambda_{n-1},\dots, - \lambda_1)+ \lambda_1,
\end{align}
we have 
\begin{equation}
\label{toshow_skew2}
s_{\widehat{\lambda}/\widehat{\mu}}(x_1,\x_1,\ldots,x_{n-m},\x_{n-m})= \prod_{i=1}^{n-m} (x_i^{1/2}+\x_i^{1/2})^{-1} 
\oo_{\lambda/\mu}(x_1,\ldots,x_{n-m}) \: \oe_{\left(\lambda+\frac{1}{2}\right)/\left(\mu+\frac{1}{2}\right)}(x_1,\ldots,x_{n-m}).
\end{equation}
\end{enumerate}
In both statements, the skew Schur polynomial on the left-hand side has to be interpreted to be zero if the shape $\widehat{\mu}$ is not contained in $\widehat{\lambda}$, and the situation is similar for the characters appearing on the right-hand side. 
\end{thm}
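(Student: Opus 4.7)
The plan is to observe that the entire combinatorial proof of Theorem~\ref{thm:fact1} is only sensitive to the top row of the Gelfand--Tsetlin patterns and the corresponding top boundary of the associated trapezoidal honeycomb graphs. The argument relies on (i) an averaging/symmetrization of interior edge weights, (ii) Ciucu's factorization theorem for reflectively symmetric graphs, and, for part~(\ref{item:part2}), (iii) an almost-doubling graph operation. All three operations act on the interior of the graph, so they are indifferent to whether the bottom row is set to zero (the straight-shape case) or to an arbitrary partition (the skew case). I would therefore run the entire machinery with $\widehat{\mu}$ prescribed as the bottom row in place of the zero row, and show that the resulting identity is exactly Theorem~\ref{skew_theo}.

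Concretely, the steps I would carry out are the following. First, I would express $s_{\widehat{\lambda}/\widehat{\mu}}(x_1,\x_1,\ldots,x_{n-m},\x_{n-m})$ as a generating function over skew Gelfand--Tsetlin patterns with prescribed top row $\widehat{\lambda}$ and prescribed bottom row $\widehat{\mu}$, and similarly express $\sp_{\lambda/\mu}$, $\oe_{(\lambda+1)/(\mu+1)}$, $\oo_{\lambda/\mu}$, $\oe_{(\lambda+\frac{1}{2})/(\mu+\frac{1}{2})}$ as generating functions over the appropriate skew symplectic, orthogonal, and half Gelfand--Tsetlin patterns, using the combinatorial descriptions set up in Section~\ref{sec_skew}. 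Second, I would translate each of these generating functions into a perfect-matching count on a vertically symmetric trapezoidal subgraph of the hexagonal grid, now with \emph{both} the top and the bottom boundary prescribed (rather than only the top). Third, I would apply verbatim the same averaging of interior edge weights as in the straight case, which yields a reflectively symmetric edge-weighted trapezoid, and then invoke Ciucu's factorization theorem \cite{Ciu97} to split the matching generating function into a product over the two half-graphs. In part~(\ref{item:part1}) these halves produce $\sp_{\lambda/\mu}$ and $\oe_{(\lambda+1)/(\mu+1)}$ directly; in part~(\ref{item:part2}) I would first apply the almost-doubling graph operation used in the proof of Theorem~\ref{thm:fact1}(\ref{item:part2}) and only then apply Ciucu's theorem, taking care that the doubling operation treats the prescribed bottom row $\widehat{\mu}$ in the same way as it treats the top row $\widehat{\lambda}$.

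The main obstacle I expect is setting up the correct dictionary between the skew characters defined in Section~\ref{sec_skew} and the prescribed-bottom-row perfect matchings. For the half Gelfand--Tsetlin patterns used for $\oo_{\lambda/\mu}$ one must pin down the correct half-integer shift for the bottom row, and for the almost-doubling step in part~(\ref{item:part2}) one must check that doubling $\widehat{\mu}$ produces exactly the boundary data needed to read off $\oo_{\lambda/\mu}$ and $\oe_{(\lambda+\frac{1}{2})/(\mu+\frac{1}{2})}$ on the two halves. A secondary but essential verification is the degenerate case: one must confirm that whenever $\widehat{\mu} \not\subseteq \widehat{\lambda}$ both sides of (\ref{toshow_skew}) and (\ref{toshow_skew2}) vanish, which reduces to checking that forbidding a valid perfect matching of the prescribed-boundary trapezoid forces at least one of the two half-graphs to admit no matching either. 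Once these dictionary and vanishing checks are in place, the algebraic content of the proof is literally the content of the straight-shape proof, because averaging and Ciucu's theorem are entirely local to the interior and do not see the choice of bottom row.
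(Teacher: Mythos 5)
Your proposal follows essentially the same route as the paper: encode the inner shape $\widehat{\mu}$ as extra boundary edges on the trapezoidal honeycomb graph, observe that the averaging procedure and Ciucu's factorization (and, for part two, the doubling operation) act row-locally and are insensitive to this extra prescribed boundary, and then identify the two half-graphs with the trapezoidal pattern models of the skew characters defined in Section~\ref{sec_skew}. The only discrepancy is the immaterial one that in the paper's convention the inner shape sits at the \emph{top} of the graph and the outer shape at the bottom, the reverse of what you write; otherwise your ``dictionary'' and vanishing checks are exactly the steps the paper highlights.
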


It would be interesting to find representation-theoretic proofs of Theorems~\ref{thm:fact1} and \ref{skew_theo}.

\subsection*{Structure of the paper} The paper is organized as follows. In Section~\ref{patterns}, we establish graphical interpretations of the group characters appearing in Theorem~\ref{thm:fact1}. For Schur polynomials, this is fairly standard. For the other characters, we rely on the work of Proctor~\cite{Kin76,KinElS84,Pro94} where he provides combinatorial interpretations in terms of various types of half (Gelfand-Testlin) patterns. We then use the general idea from the Schur case to obtain equivalent graphical models in terms of honeycomb graphs. However, in these cases there are a few subtleties to take into account. In Section~\ref{sec_symmetrizing}, we then perform the above mentioned averaging procedure to achieve symmetric edge weights in the case of Theorem~\ref{thm:fact1}(\ref{item:part1}). The proof of this identity is then concluded in Section~\ref{sec_ciucu} using Ciucu's factorization result. As for the proof of 
Theorem~\ref{thm:fact1}(\ref{item:part2}), we introduce the above mentioned doubling operation in Section~\ref{sec_doubling} and then complete the proof again by using Ciucu's factorization result. In Section~\ref{sec_skew}, we deal with the case of skew Schur polynomials, where it will be seen that it is straightforward to generalize the proof for straight shapes. Building on previous work by Koike and Terada~\cite{KoiTer90} and others, we briefly discuss the previous appearance of the factors on the right-hand sides of \eqref{toshow_skew} and \eqref{toshow_skew2} in a representation theoretic setting. 

\subsection*{Conventions} Throughout the article, we set \begin{equation}\bar x = x^{-1}.\end{equation}
For a partition $\lambda=(\lambda_1,\ldots,\lambda_n)$, we always allow zero parts in a partition. 
Our graphs are edge-weighted in general and if the weight of an edge is not specified, it is 1.
Whenever we speak of a matching of a graph, we usually mean a perfect matching unless stated differently.

\section{Combinatorial interpretations of group characters as matching generating functions}
\label{patterns}

The purpose of this section is to provide combinatorial interpretations of the quantities in Theorem~\ref{thm:fact1} in terms of matching generating functions. 

\subsection{The general linear group}

For a positive integer $n$ and a partition $\lambda$, the associated \emph{Schur polynomial} $s_{\lambda}(x_1,\ldots,x_n)$ is known to be the generating function of \emph{semistandard tableaux of shape $\lambda$}\footnote{That is fillings of the Young diagram of shape $\lambda$ which are weakly increasing along rows and strictly increasing along columns.} with entries in $\{1,2,\ldots,n\}$ 
where the weight of a particular semistandard tableau $T$ is 
\begin{equation}
\label{tableau-weight}
x_1^\text{$\#$ of $1$'s in $T$} x_2^\text{$\#$ of $2$'s in $T$} \cdots 
x_n^\text{$\#$ of $n$'s in $T$}. 
\end{equation}
From this interpretation it is obvious that the Schur polynomial $s_{\lambda}(x_1,\ldots,x_n)$ vanishes if $\lambda$ has more than $n$ non-zero parts.
It is fundamental to our combinatorial proof to work with a different interpretation as a generating function which is in terms of the \emph{(perfect) matching generating function} of a certain subgraph of the \emph{hexagonal grid} of trapezoidal shape. The graphs relevant for our graphical model are the following.

\begin{defi}[The graph $\T_{n,k}$] Let $n,k$ be positive integers. The subgraph of the hexagonal grid that consists of $n-1$ centered rows of consecutive hexagons of lengths $k,k+1,\ldots,k+n-2$ with two edges added, one incident with the bottom vertex of the leftmost vertical edge and the other incident with the bottom vertex of the rightmost vertical edge, is denoted by $\T_{n,k}$. The degenerate case $\T_{1,k}$ consists of a zig-zag line with $2k+1$ vertices.
\end{defi}

See Figure~\ref{T68}(a) for a drawing of $\T_{6,8}$, where the two additional edges are marked in red. The graph $\T_{n,k}$ is a bipartite graph that has $n$ more vertices in one vertex class than in the other vertex class, thus it has no perfect matching. This can be changed by attaching $n$ vertical edges to a selection of $n$ of the $n+k$ bottommost vertices.  In addition, we also introduce edge weights.

\begin{figure}
\begin{tabular}{c c}
\scalebox{0.35}{\includegraphics{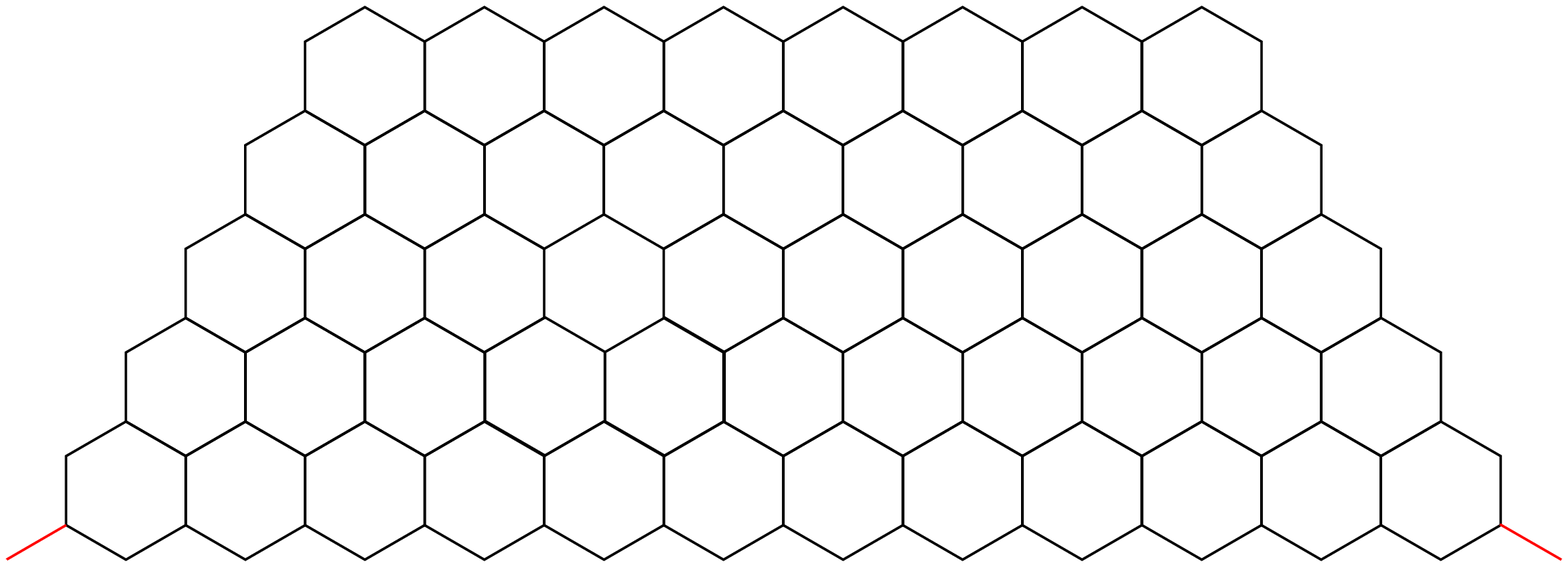}}
&
\scalebox{0.35}{
\psfrag{1}{\Large$1$}
\psfrag{2}{\Large$2$}
\psfrag{3}{\Large$3$}
\psfrag{4}{\Large$4$}
\psfrag{5}{\Large$5$}
\psfrag{6}{\Large$6$}
\includegraphics{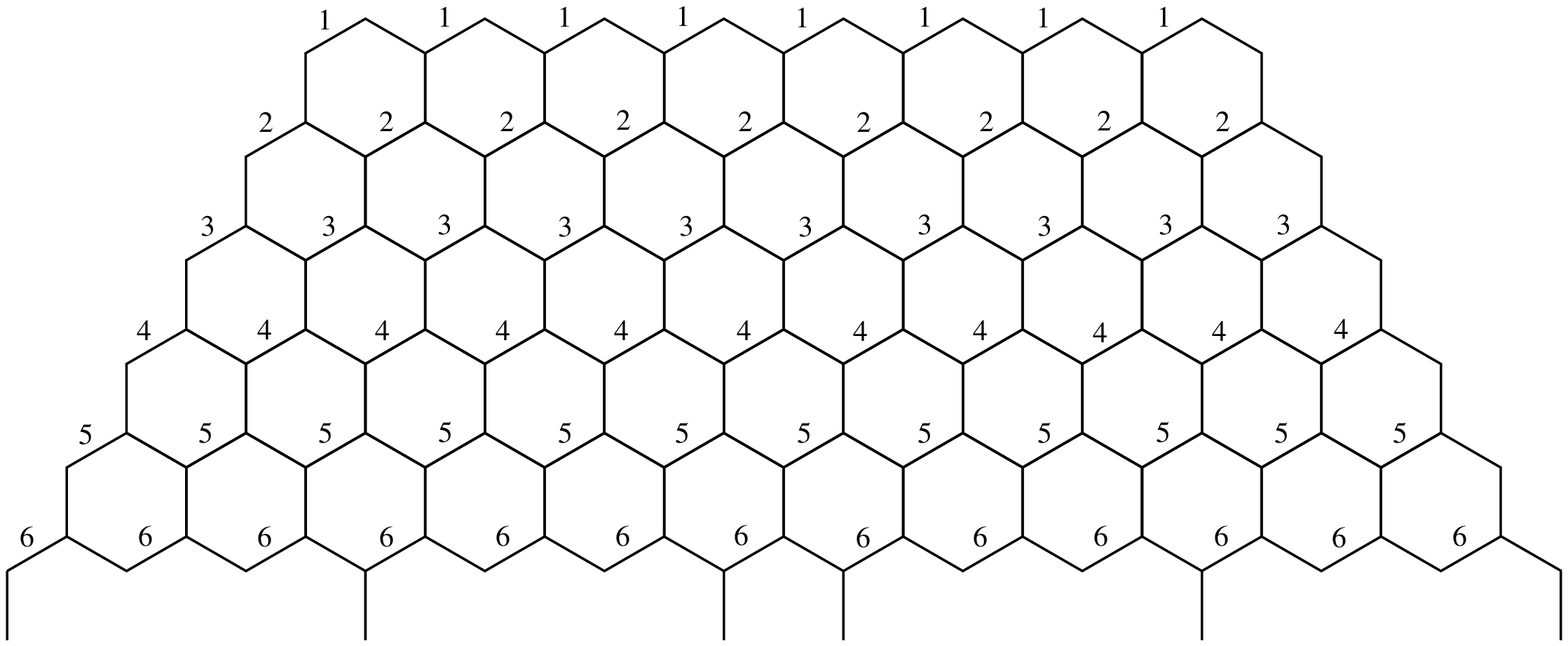}} \\
(a) & (b) 
\end{tabular}
\caption{\label{T68} The trapezoidal honeycomb graphs (a) $T_{6,8}$ and 
(b) $T_{6,8}^{1,4,7,8,11,14}(x_1,\ldots,x_6)$.}
\end{figure}

\begin{defi}[The graphs $\T_{n,k}^{\mathbf{p}}$ and $\T_{n,k}^{\mathbf{p}}(x_1,\ldots,x_n)$] Let $n,k$ be positive integers and $\mathbf{p}=(p_1,\ldots,p_n)$ be a sequence of integers with $1 \le p_1 < p_2 < \ldots < p_n \le n+k$.
\begin{enumerate} 
\item The graph $\T_{n,k}^{\mathbf{p}}$ is obtained from $\T_{n,k}$ by attaching vertical edges to the vertices in positions $p_1,p_2,\ldots,p_n$ at the bottom, where the bottommost vertices are numbered from left to right with $1,2,\ldots,n+k$.
\item The weighted graph $\T_{n,k}^{\mathbf{p}}(x_1,\ldots,x_n)$ is obtained from $\T_{n,k}^{\mathbf{p}}$ as follows: Each edge of type \swne (that is SW-NE edges) in row $i$ of zig-zag lines (counted from the top) carries the weight $x_i$, while all other edges have weight $1$. 
\end{enumerate}
\end{defi}

See Figure~\ref{T68}(b) for the graph $\T_{6,8}^{1,4,7,8,11,14}$.
The weights are also indicated in this figure, where $x_i$ is abbreviated as $i$, and, by our convention, edges have weight $1$ if no weight is indicated. As usual, the weight of a matching is the product of the weights of all edges that are contained in the matching, and the matching generating function is the sum of all matching weights. In general, the matching generating function is denoted by $\m(G)$, where $G$ is an edge-weighted graph, and $\mathcal{M}(G)$ denotes the set of all matchings of $G$. 

We are now in a position to state the different interpretation of $s_{\lambda}(x_1,\ldots,x_n)$ as a generating function. As noted above, the Schur polynomial is zero unless $\lambda$ has at most $n$ parts. For partitions with less than $n$ parts, it is convenient to fill up $\lambda$ with zero parts so that it has precisely $n$ parts.

\begin{thm}
\label{schur-matching} 
For a partition $\lambda=(\lambda_1,\ldots,\lambda_n)$, we have 
\begin{equation}
s_{\lambda}(x_1,\ldots,x_n) = 
\m(\T_{n,\lambda_1}^{\lambda_n+1,\lambda_{n-1}+2,\ldots,\lambda_{1}+n}(x_1,\ldots,x_n)).
\end{equation}
\end{thm}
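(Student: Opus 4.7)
The plan is to prove the theorem bijectively by factoring through Gelfand--Tsetlin patterns: I would combine the classical bijection between semistandard tableaux and GT patterns with the standard correspondence between GT patterns and rhombus tilings of a trapezoidal region, the latter being in obvious bijection with perfect matchings of $\T_{n,\lambda_1}^{\mathbf{p}}$.

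First, recall that each SSYT $T$ of shape $\lambda$ with entries in $\{1,\ldots,n\}$ is encoded by the Gelfand--Tsetlin pattern $A=(a_{i,j})_{1\le j\le i\le n}$ whose $i$-th row records the shape of the subtableau of $T$ consisting of entries at most $i$. The interlacing $a_{i+1,j+1}\le a_{i,j}\le a_{i+1,j}$ is equivalent to the SSYT condition, the last row equals $\lambda$, and the weight \eqref{tableau-weight} of $T$ equals $\prod_{i=1}^{n} x_i^{|A_i|-|A_{i-1}|}$, where $|A_i|=\sum_{j} a_{i,j}$ and $|A_0|=0$.

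Second, I would identify GT patterns with perfect matchings of $\T_{n,\lambda_1}^{\mathbf{p}}$ for $\mathbf{p}=(\lambda_n+1,\lambda_{n-1}+2,\ldots,\lambda_1+n)$ via the standard rhombus-tiling viewpoint. A perfect matching of the honeycomb graph corresponds to a tiling of the associated trapezoidal region by three types of unit rhombi; reading the horizontal positions of the ``vertical'' rhombi in each of the $n$ rows gives a sequence of $n$ strictly increasing tuples that interlace, and the usual strict-to-weak shift then produces a GT pattern. The $n$ vertical edges at the positions $\mathbf{p}$ force the last row of the pattern to equal $\lambda$ (after the shift, $\lambda_n+1<\lambda_{n-1}+2<\cdots<\lambda_1+n$ is converted precisely to the weakly decreasing sequence $\lambda_1\ge\cdots\ge\lambda_n$), and the two extra corner edges of $\T_{n,\lambda_1}$ handle the boundary so that the top row consists of a single free entry. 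An alternative, more elementary way to make the bijection precise is to induct on $n$: peel off the bottom zig-zag together with its $n$ vertical edges and observe that the residual configuration is of the same type with parameters $(n-1,\lambda_1)$, whose boundary data now encode a GT row interlacing $\lambda$.

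Finally, I would match weights. Only SW--NE edges in row $i$ carry a nontrivial weight, namely $x_i$; under the bijection above, the number of such edges used by a given matching in row $i$ is exactly $|A_i|-|A_{i-1}|$, which by the first step is the number of $i$'s in $T$. Hence the matching weight equals the tableau weight, and summing over all matchings yields $s_{\lambda}(x_1,\ldots,x_n)$. The main obstacle is purely notational: aligning the three conventions---the reversal that converts $(\lambda_1,\ldots,\lambda_n)$ into the boundary data $(\lambda_n+1,\ldots,\lambda_1+n)$, the strict-to-weak shift on GT rows, and the precise role of the two red corner edges of $\T_{n,\lambda_1}$---so that the dictionary between hexagonal edges, rhombi and GT entries is consistent. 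Once this bookkeeping is pinned down, each step is a routine verification.
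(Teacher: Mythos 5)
Your proposal is correct and follows essentially the same route as the paper: the paper also factors the bijection through Gelfand--Tsetlin patterns (row $i$ records the shape of entries $\le i$), applies the strict-to-weak shift by adding $i$ to the $i$-th $\nearrow$-diagonal, reads off the matching from the positions of the vertical edges in each row, and verifies weight-preservation by counting \swne\ edges per row. The paper presents this only by worked example with a reference to \cite{cohn98}, deferring the detailed weight computation to the proof of Lemma~\ref{prop:even-ortho-nonneg-pattern-character}, so your write-up is if anything slightly more explicit about the bookkeeping.
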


This relation between semistandard tableaux and matchings of the trapezoidal honeycomb graph is not new. 
We illustrate it now with the help of an example where $n=6$. We consider the following semistandard tableau $T$ of shape $(8,6,4,4,2,0)$.
\begin{equation}
\label{eg-ssyt}
\begin{ytableau}
1 & 1 & 1 & 2 & 2 & 5 & 5 & 6 \\
2 & 2 & 3 & 3 & 5 & 6   \\
3 & 4 & 4 & 4    \\
5 & 5 & 6 & 6   \\
6 & 6 
\end{ytableau}
\end{equation}
By using a standard procedure \cite[p.\ 313ff.]{Sta99}, we transform the semistandard tableau into a Gelfand-Tsetlin pattern with $n$ rows as follows: Row $i$ of the Gelfand-Tsetlin pattern is essentially the shape of the entries less than or equal to $i$ in $T$, written in reverse order and filled up with zeros if necessary so that it has length $i$. For the semistandard tableau in \eqref{eg-ssyt}, we obtain the following pattern. 
\begin{equation}
\begin{array}{ccccccccccc}
&&&&&3&&&&& \\
&&&&2&&5&&&& \\
&&&1&&4&&5&&& \\
&&0&&4&&4&&5&& \\
&0&&2&&4&&5&&7& \\ 
0&&2&&4&&4&&6&&8
\end{array}
\end{equation}
The weight of a Gelfand-Tsetlin pattern with $n$ rows is
$\prod_{i=1}^n x_i^{r_i - r_{i-1}}$, where $r_i$ is the sum of the entries in the $i$-th row and $r_0 = 0$.
Now we add $i$ to the $i$-th $\nearrow$-diagonal, where we count the diagonals from left to right.
\begin{equation}
\begin{array}{ccccccccccc}
&&&&&4&&&&& \\
&&&&3&&7&&&& \\
&&&2&&6&&8&&& \\
&&1&&6&&7&&9&& \\
&1&&4&&7&&9&&12& \\ 
1&&4&&7&&8&&11&&14
\end{array}
\end{equation}
We translate this pattern into a matching of $\T_{6,8}^{1,4,7,8,11,14}$; see also \cite[Proposition~2.1]{cohn98}.  First note that the positions of the vertical edges added at the bottom of the graph are just $\lambda_6+1,\lambda_5+2,\ldots,\lambda_1+6$ and that is also the bottom row of the pattern. In general, row $i$ of the pattern lists the positions of the vertical matching edges in row $i$ of $\T_{6,8}(1,4,7,8,11,14)$, counting from the left starting with $1$. All other matching edges are forced then. 
The matching corresponding to our running example in \eqref{eg-ssyt} is given in Figure~\ref{T68_matching}. It is also not difficult to see that the bijection is weight-preserving. (See the proof of Lemma~\ref{prop:even-ortho-nonneg-pattern-character} for a case where we show in detail that a similar bijection is weight-preserving.)

\begin{figure}
\scalebox{0.35}{\includegraphics{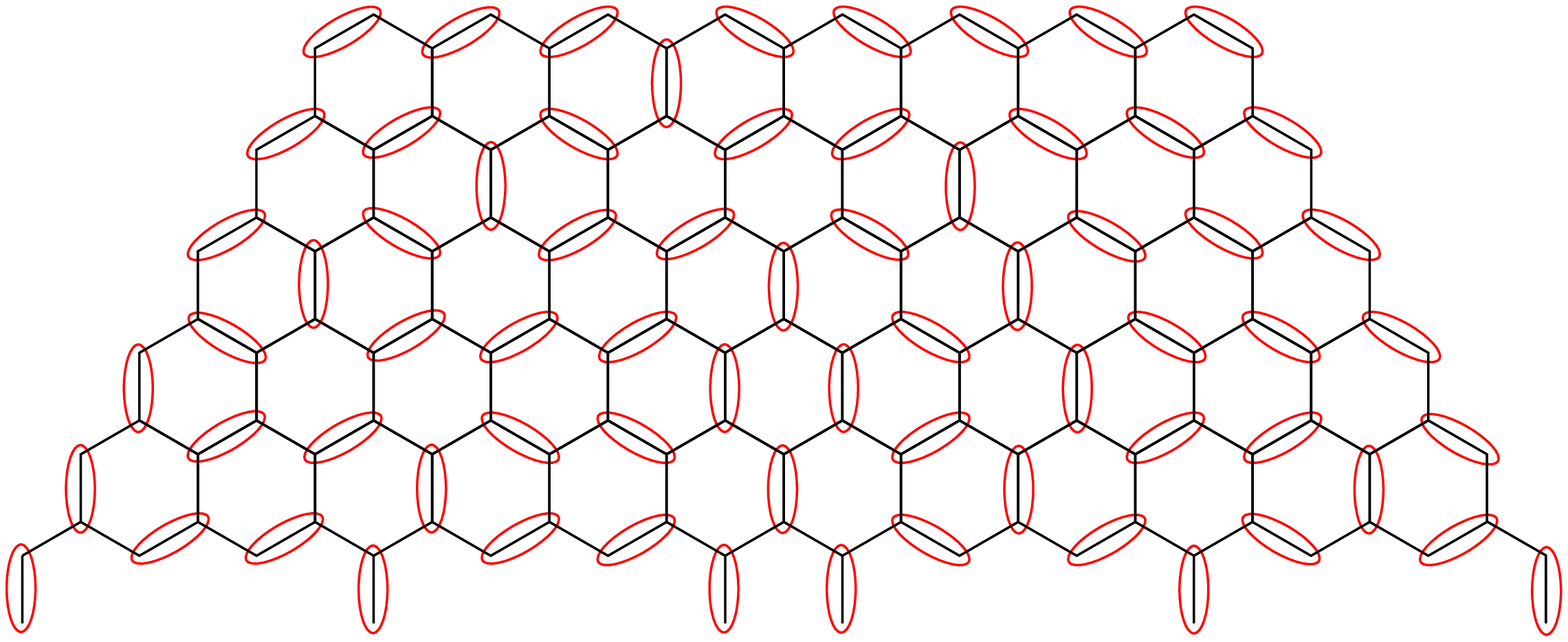}}
\caption{\label{T68_matching} The matching corresponding to the semistandard tableau in \eqref{eg-ssyt}.}
\end{figure}

\subsection{The symplectic group}

As for the other classical groups dealt with in this paper, we rely on variants of Gelfand-Tsetlin patterns~\cite{Kin76,KinElS84,Pro94} . Our patterns are half-turn rotated versions of his, but the labelling of the rows and the inequalities are exactly the same.

To define Gelfand-Tsetlin patterns for the symplectic groups as well as the orthogonal groups, we need the notion of a \emph{half pattern}. 

\begin{defi}[Half patterns]
\label{def:nhalf}
Let $n$ be a positive integer. An {\em $n$-half (Gelfand-Tsetlin) pattern} $P$ is an array of $n$ rows of integers or half-integers of lengths $1,1,2,2,\dots, \lceil n /2 \rceil $ aligned as follows (for $n=6$)
\begin{equation}
\begin{array}{ccccccc}
P_{1,1} \\[0.2cm]
& P_{2,1} \\[0.2cm]
P_{3,1} & & P_{3,2} \\[0.2cm]
& P_{4,1} &  & P_{4,2} \\[0.2cm]
P_{5,1} &  & P_{5,2} &  & P_{5,3} \\[0.2cm]
& P_{6,1} &  & P_{6,2} &  & P_{6,3} 
\end{array},
\end{equation}
such that the entries are weakly increasing along $\nearrow$-diagonals and  
$\searrow$-diagonals.
The first entries in the odd rows are called {\em odd starters}.
\end{defi}

We now define the patterns underlying symplectic characters and their weights.

\begin{defi}[Symplectic patterns]
\label{def:nsympl}
Let $n$ be a positive integer. A {\em $(2n)$-symplectic (Gelfand-Tsetlin) pattern} $P=(P_{i,j})$ is a $(2n)$-half pattern whose entries are all non-negative integers.
The {\em weight} $\ws P$ of a $(2n)$-symplectic pattern $P$ is given by
\begin{equation}
\ws P = \prod_{i=1}^n x_i^{r_{2i}-2r_{2i-1}+r_{2i-2}},
\end{equation}
where $r_i = \sum_j P_{i,j}$ is the sum of entries in row $i$ for $1 \leq i \leq  2n$ and $r_0=0$.
\end{defi}

For a partition $\lambda=(\lambda_1,\ldots,\lambda_n)$, denote the set of all $(2n)$-symplectic patterns with bottom row $\lambda$ in increasing order as $\SP_\lambda$. 
A combinatorial interpretation of symplectic characters in the form of a generating function is provided next.

\begin{thm}[{\cite[Theorem 4.2]{Pro94}}]
\label{proc-symb}
Let $\lambda$ be a partition with $n$ parts. Then 
\begin{equation}
\sp_\lambda(x_1,\ldots,x_n) = \sum_{P \in \SP_\lambda} \ws P.
\end{equation}
\end{thm}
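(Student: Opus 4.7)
The plan is to prove Theorem~\ref{proc-symb} by induction on $n$. Since the weight $\ws P$ involves $x_n$ only through the exponent $r_{2n} - 2 r_{2n-1} + r_{2n-2}$, it is enough to establish a ``branching'' identity of the form
\begin{equation}
\label{eq:sp-branch-plan}
\sp_\lambda(x_1,\ldots,x_n) \;=\; \sum_{\mu,\nu} x_n^{|\lambda| - 2|\mu| + |\nu|}\, \sp_\nu(x_1,\ldots,x_{n-1}),
\end{equation}
where the sum runs over partitions $\mu=(\mu_1,\ldots,\mu_n)$ and $\nu=(\nu_1,\ldots,\nu_{n-1})$ with non-negative parts satisfying
\[
\lambda_1 \ge \mu_1 \ge \lambda_2 \ge \mu_2 \ge \ldots \ge \lambda_n \ge \mu_n, \qquad \mu_1 \ge \nu_1 \ge \mu_2 \ge \nu_2 \ge \ldots \ge \mu_{n-1} \ge \nu_{n-1} \ge \mu_n.
\]
These two interlacing chains are exactly the admissibility conditions between rows $2n-2, 2n-1, 2n$ of a $(2n)$-symplectic pattern with fixed bottom row $\lambda$, with $\mu$ and $\nu$ reading row $2n-1$ and row $2n-2$ respectively. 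Once \eqref{eq:sp-branch-plan} is available, applying the inductive hypothesis to each $\sp_\nu(x_1,\ldots,x_{n-1})$ fills in the remaining $2n-2$ rows of the pattern with the correct weight contribution, closing the induction.

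To derive \eqref{eq:sp-branch-plan} I would start from the bialternant expression for $\sp_\lambda$ given in the introduction and isolate the variable $x_n$ from both numerator and denominator. The key identity is
\[
x_n^{a+1} - \bar x_n^{a+1} \;=\; (x_n - \bar x_n) \sum_{k=0}^{a} x_n^{a - 2k},
\]
which, applied to every entry of the $n$-th row of the numerator determinant, extracts a factor of $x_n - \bar x_n$ that cancels against the corresponding factor from the $n$-th column of the denominator. The remaining multi-sums can then be reorganized via a double telescoping that reflects the two interlacing chains above, leaving a sum indexed by pairs $(\mu,\nu)$ whose inner determinant is precisely the bialternant for $\sp_\nu(x_1,\ldots,x_{n-1})$. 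The base case $n=1$ is immediate: the only free entry is $P_{1,1} \in \{0,1,\ldots,\lambda_1\}$, and $\sum_{k=0}^{\lambda_1} x_1^{\lambda_1 - 2k} = (x_1^{\lambda_1+1} - \bar x_1^{\lambda_1+1})/(x_1 - \bar x_1) = \sp_{\lambda_1}(x_1)$.

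The main obstacle is the determinantal bookkeeping inside \eqref{eq:sp-branch-plan}: one has to extract $x_n - \bar x_n$ from the $n$-th row of the numerator, cancel it against the denominator, and then reshuffle the resulting double sums so that the antisymmetrization collapses back onto the bialternant in $x_1,\ldots,x_{n-1}$ of shape $\nu$, all while tracking signs and interlacing inequalities carefully. Should this direct manipulation prove too cumbersome, an alternative very much in the spirit of the rest of this paper would be to interpret both sides of Theorem~\ref{proc-symb} as weighted sums of non-intersecting lattice paths and invoke the Lindström--Gessel--Viennot lemma, with the involution $x \leftrightarrow \bar x$ handled by a folding argument of the type used below for the honeycomb graph models.
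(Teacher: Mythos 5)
The paper does not actually prove this statement: it is quoted verbatim from Proctor \cite[Theorem 4.2]{Pro94}, so there is no internal proof to compare against. That said, your overall architecture is the standard one for this result and matches how it is established in the literature the paper is citing: induct on $n$ by peeling off the bottom three rows of the pattern. Your reduction is correct. Rows $2n$, $2n-1$, $2n-2$ of a $(2n)$-symplectic pattern with bottom row $\lambda$ are exactly a pair $(\mu,\nu)$ satisfying your two interlacing chains; the rows above them form a $(2n-2)$-symplectic pattern with bottom row $\nu$; the weight factors as $x_n^{|\lambda|-2|\mu|+|\nu|}$ times the weight of that smaller pattern; and your base case $n=1$ is right.

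The gap is that the branching identity
\[
\sp_\lambda(x_1,\ldots,x_n) \;=\; \sum_{\mu,\nu} x_n^{|\lambda| - 2|\mu| + |\nu|}\, \sp_\nu(x_1,\ldots,x_{n-1})
\]
is where essentially all of the content of the theorem lives, and you have not proved it. Extracting $(x_n-\bar x_n)$ from the last row of the numerator and of the denominator is fine, and after cancellation the last row of the numerator consists of the $SL_2$-characters $\sum_{k=0}^{\lambda_j+n-j} x_n^{\lambda_j+n-j-2k}$. But a single round of column manipulations --- the analogue of what proves the $GL_n$ branching rule $s_\lambda(x_1,\ldots,x_n)=\sum_{\mu\prec\lambda}x_n^{|\lambda|-|\mu|}s_\mu(x_1,\ldots,x_{n-1})$ --- produces only \emph{one} interlacing chain. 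Producing the second chain requires a genuinely additional step: the intermediate object obtained after the first round is neither a smaller symplectic character nor a Schur polynomial (Proctor handles this by interposing ``odd symplectic'' characters between $\sp_\lambda(x_1,\ldots,x_n)$ and $\sp_\nu(x_1,\ldots,x_{n-1})$, and Zhelobenko's original determinantal derivation is likewise a substantial computation). The phrase ``reorganized via a double telescoping'' names this difficulty rather than resolving it, and the Lindstr\"om--Gessel--Viennot alternative you mention is similarly only a pointer, since the reflection argument needed to encode $x\leftrightarrow\bar x$ for symplectic tableaux as lattice paths is itself a nontrivial (if known) construction. To complete the proof you should either cite the two-step branching rule as a known result or carry out one of these arguments in full; everything surrounding it in your write-up is sound.
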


\begin{example}
Let $n=2$ and $\lambda = (1,0)$. Then the four $4$-symplectic patterns contributing to $\sp_{(1,0)}(x_1,x_2)$ and their weights are as follows.
\begin{equation}
\begin{array}{cccc}
\begin{array}{cccc}
1 \\
& 1 \\
0 && 1 \\
& 0 && 1
\end{array}
&
\begin{array}{cccc}
0 \\
& 1 \\
0 && 1 \\
& 0 && 1
\end{array}
&
\begin{array}{cccc}
0 \\
& 0 \\
0 && 1 \\
& 0 && 1
\end{array}
&
\begin{array}{cccc}
0 \\
& 0 \\
0 && 0 \\
& 0 && 1
\end{array} \\
\hline
\x_1 & x_1 & \x_2 & x_2 
\end{array}
\end{equation}
\end{example}

Analogous to the case of Schur polynomials, we express the symplectic characters as a matching generating function of certain weighted graphs. Let $\htm{}{2n,k}$ denote the half-trapezoidal honeycomb graph consisting of $2n-1$ left-justified rows of consecutive hexagons of lengths $k,k,k+1,k+1,\dots,k+n-2,k+n-2,k+n-1$, with one edge added incident with the bottom vertex of the rightmost vertical edge; see Figure~\ref{fig:eg-htm-graph} for an example. Note that the odd and even rows are vertically aligned separately, and we fix the convention that the odd rows are shifted by half a hexagon to the left of the even rows. Just like $\T_{n,k}$, the bipartite graph $\htm{}{2n,k}$ does not have a perfect matching as it has $n$ more vertices in one vertex class than in the other. We attach vertical edges to $n$ of the $n+k$ bottommost vertices at positions $p_1< \dots < p_n$ (numbered $1, \dots, n+k$ from left to right) to form the graph $\htm{\mathbf{p}}{2n,k}$. 
We also consider an edge-weighted version: The graph $\htm{\mathbf{p}}{2n,k}(x_1,\dots,x_{2n})$ is formed by weighting each edge of type \senw in row $i$ by $x_i$. All other edges are weighted 1.
\begin{figure}
\begin{center}
\scalebox{1.2}{\includegraphics{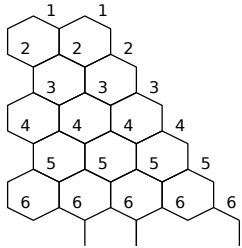}}
\caption{The half-trapezoidal honeycomb graph 
$\htm{2,3,5}{6,2}(x_1,\dots,x_6)$.}
\label{fig:eg-htm-graph}
\end{center}
\end{figure}

\begin{thm}
\label{prop:symplectic-pattern-character}
For a partition $\lambda = (\lambda_1,\dots,\lambda_n)$, we have
\begin{equation}
\sp_\lambda(x_1,\ldots,x_n)
 = M(\htm{\lambda_n+1,\lambda_{n-1}+2,\dots,\lambda_1+n}
 {2n,\lambda_1}
 (x_1,\x_1, \dots,x_{n},\x_n)).
\end{equation}
\end{thm}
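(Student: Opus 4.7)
The approach is to mimic the proof of Theorem~\ref{schur-matching}: I would establish a weight-preserving bijection between $\SP_\lambda$ and the set of perfect matchings of $\htm{\lambda_n+1,\lambda_{n-1}+2,\ldots,\lambda_1+n}{2n,\lambda_1}(x_1,\bar x_1,\ldots,x_n,\bar x_n)$, and then invoke Proctor's formula (Theorem~\ref{proc-symb}) to conclude.

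\emph{The bijection.} Given $P\in\SP_\lambda$, shift row $i$ by adding $j$ to its $j$-th entry from the left; this turns each row into a strictly increasing sequence whose $(2n)$-th row is exactly $(\lambda_n+1,\lambda_{n-1}+2,\dots,\lambda_1+n)$. I then declare that the shifted row $i$ lists the positions (numbered from the left, among the bottommost vertices of row $i$) of the vertical matching edges in row $i$ of the half-trapezoidal graph. The alternating pattern-row lengths $1,1,2,2,\dots,n,n$ mesh exactly with the alternating shape of $\htm{\mathbf{p}}{2n,\lambda_1}$ in which odd rows are offset by half a hexagon to the left of even rows. The weakly increasing conditions along $\nearrow$- and $\searrow$-diagonals of $P$ translate, after the shift, into the strict interlacing between vertical-edge positions of consecutive rows that is necessary and sufficient for the remaining zig-zag edges to be uniquely forced into a perfect matching. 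Non-negativity of the odd starters amounts to the leftmost vertical-edge position respecting the graph's left boundary, and the single extra edge attached on the right fixes the rightmost bottom vertical edge at position $\lambda_1+n$. The inverse is the natural one: from a perfect matching, read off the positions of vertical matching edges row by row and undo the shift.

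\emph{Weight matching.} Under the substitution $(x_1,\bar x_1,\dots,x_n,\bar x_n)$, the SE-NW edges in row $2i-1$ are weighted $x_i$ and those in row $2i$ are weighted $\bar x_i$. If $\nu_i$ denotes the number of SE-NW edges in row $i$ of the matching, the matching weight is $\prod_{i=1}^n x_i^{\nu_{2i-1}-\nu_{2i}}$. A local count in each horizontal strip (the analogue of the Schur calculation showing that the number of SW-NE edges in row $i$ equals $r_i-r_{i-1}$) expresses $\nu_i$ in terms of the row sums $r_i$ of the pattern; the difference $\nu_{2i-1}-\nu_{2i}$ telescopes across each adjacent pair of strips to exactly $r_{2i}-2r_{2i-1}+r_{2i-2}$, so the matching weight equals $\prod_{i=1}^n x_i^{r_{2i}-2r_{2i-1}+r_{2i-2}}=\ws{P}$, as required.

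\emph{Main obstacle.} The most delicate step is the weight count, since the half-trapezoid alternates between two geometrically different kinds of horizontal strip (odd-row-above-even versus even-row-above-odd). One has to perform the local SE-NW count separately in each type, and the interplay of these two counts under the substitution $x_{2i-1}\mapsto x_i$, $x_{2i}\mapsto\bar x_i$ is precisely what produces the symplectic alternating combination $r_{2i}-2r_{2i-1}+r_{2i-2}$ in the exponent of each $x_i$. Everything else, including the forcedness argument for the non-vertical edges and the bijectivity of the inverse map, is a direct adaptation of the Schur case.
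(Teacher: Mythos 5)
Your proposal is correct and follows essentially the same route as the paper: read off the positions of the vertical matching edges row by row, apply the (diagonal/row) shift to pass to a $(2n)$-symplectic pattern, and verify weight-preservation by counting \senw{} edges per zig-zag row so that the exponent of $x_i$ telescopes to $r_{2i}-2r_{2i-1}+r_{2i-2}$. The paper likewise only sketches this count, deferring the detailed computation to the analogous argument in Lemma~\ref{prop:even-ortho-nonneg-pattern-character}.
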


In Figure~\ref{fig:eg-htm-matching}, we illustrate Theorem~\ref{prop:symplectic-pattern-character} with an example where the underlying graph is the one in Figure~\ref{fig:eg-htm-graph}.

\begin{figure}[htbp!]
\begin{center}
\begin{tabular}{ccc}
\includegraphics[scale=1]{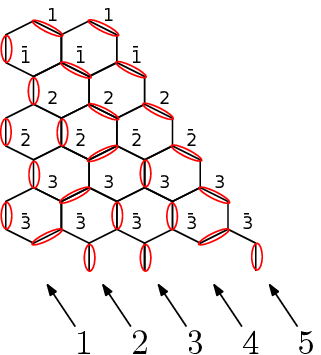} 
&
\hspace*{1cm}
\raisebox{2.0cm}{
\begin{tabular}{cccccc}
3 \\
& 3 \\
2 && 3 \\
& 2 && 4 \\
1 && 3 && 4 \\
& 2 && 3 && 5
\end{tabular}
}
&
\hspace*{1cm}
\raisebox{2.0cm}{
\begin{tabular}{cccccc}
0 \\
& 0 \\
0 && 0 \\
& 0 && 1 \\
0 && 1 && 1 \\
& 1 && 1 && 2
\end{tabular}
} 
\\
(a) & (b) & (c) 
\end{tabular}
\caption{
(a) A matching of the half-trapezoidal honeycomb graph 
$\htm{2,3,5}{6,2}(x_1,x_2,x_3,\x_1,\x_2,\x_3)$ corresponding to the partition $\lambda = (2,1,1)$ with weight 
$x_2 x_3$. The numbering of the vertical edges is indicated below.
(b) The positions of the vertical edges in rows $1$ through $6$ arranged in the shape of a $6$-half pattern.
(c) A $6$-symplectic pattern with bottom row $\lambda$ having the same weight as the matching on the left.}
\label{fig:eg-htm-matching}
\end{center}
\end{figure}

\begin{proof}
Similar to the case of Schur polynomials, we construct a weight-preserving bijection between the sets corresponding to the two sides of the equation.

First of all, note that a matching in 
$\htm{p_1,\dots,p_n}{2n,k}$
is completely determined by the positions of the vertical edges in each row. Further, the number of vertical edges in the $i$-th row can be seen to be $\lceil \frac{i}{2} \rceil$. Let the positions of the vertical edges in row $i$ be labelled starting from the left with $n+1-\lceil \frac{i}{2} \rceil$.

Arranging the positions of these vertical edges in the form of a $(2n)$-half pattern, we see that the $\nearrow$-diagonals are strictly increasing, and the $\searrow$-diagonals are weakly increasing. Therefore, decreasing the $j$-th 
$\searrow$-diagonal (counted from the left, starting with $1$) by $j$, we obtain a $(2n)$-symplectic pattern according to Definition~\ref{def:nsympl}. This construction is illustrated in Figure~\ref{fig:eg-htm-matching}.

The proof that this map is weight-preserving is very similar to that for the general linear group. (We give more details on this in a similar situation in the proof of Lemma~\ref{prop:even-ortho-nonneg-pattern-character}.)
\end{proof}

\subsection{The even orthogonal group}

We now define Gelfand-Tsetlin patterns for the even orthogonal groups. 
For an $n$-half pattern $P=(P_{i,j})$ as described in Definition~\ref{def:nhalf},
the {\em absolute row sum} of the $i$-th row is given by $r_i^+ = \sum_j |P_{i,j}|$.
Let $\sgn(x) = \begin{cases} 1 & x \geq 0, \\ -1 & x<0. \end{cases}$

\begin{defi}[Orthogonal patterns]
\label{defi_evenpatterns}
Let $n$ be a positive integer. A {\em $(2n-1)$-orthogonal (Gelfand-Tsetlin) pattern} $P$ is a $(2n-1)$-half pattern whose entries are either all integers or all half-integers, and which satisfy the following conditions:
\begin{itemize}
\item All entries except the odd starters are non-negative.
\item The odd starters satisfy 
$ |P_{2i-1,1}| \leq \min(P_{2i-2,1},P_{2i,1})$ (with $P_{0,1},P_{2n,1}=+\infty$).
\end{itemize}
The {\em weight} $\woe P$ of a $(2n-1)$-orthogonal pattern $P$ is given by
\begin{equation}
\label{wt-even-orthog}
\woe P = \prod_{i=1}^{n} x_i^{\sgn(P_{2i-1,1}) \sgn(P_{2i-3,1}) (r_{2i-1}^+ - 2r_{2i-2}^+ + r_{2i-3}^+)},
\end{equation}
where we set $r^+_0 = r^+_{-1} = 0$ and $P_{-1,1} = 0$.
\end{defi}

For an integer partition or a half-integer partition $\lambda = 
(\lambda_1,\dots,\lambda_n)$, we set 
$\lambda^- = (\lambda_1,\dots,\lambda_{n-1},\allowbreak -\lambda_n)$.
Denote the set of all $(2n-1)$-orthogonal patterns with bottom row $\lambda$ or $\lambda^-$ in increasing order as $\OP_\lambda$ or $\OP_{\lambda^{-}}$, respectively. 

\begin{thm}[{\cite[First part of Theorem 7.3]{Pro94}}]
\label{even}
Let $\lambda=(\lambda_1,\ldots,\lambda_n)$ be either an integer partition or a half-integer partition. Then 
\begin{equation}
\oe_\lambda(x_1,\ldots,x_n) 
= \sum_{P \in \OP_\lambda \cup \OP_{\lambda^-}} \woe P.
\end{equation}
\end{thm}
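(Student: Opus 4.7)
The plan is to prove the theorem by induction on $n$, following the restriction chain $O_{2n} \supset O_{2n-1} \supset O_{2n-2}$ that underlies the structure of the pattern. The base case $n=1$ is direct: a $1$-orthogonal pattern is a single odd starter $P_{1,1}$ with $|P_{1,1}| = \lambda_1$ and weight $x_1^{\sgn(P_{1,1})\lambda_1}$. Summing over $\OP_\lambda \cup \OP_{\lambda^-}$ recovers $x_1^{\lambda_1}+\bar x_1^{\lambda_1}$ when $\lambda_1 > 0$ (two distinct sign choices) and $1$ when $\lambda_1 = 0$ (only one pattern, since the two sets coincide), which matches $\oe_{\lambda_1}(x_1)$ computed from the determinantal definition, taking into account the prefactor $(1+[\lambda_1 \neq 0])$.

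For the inductive step, I would first derive from the defining determinantal formula the two-step branching identity
\begin{equation*}
\oe_\lambda(x_1,\ldots,x_n) \;=\; \sum_{(\nu,\mu)} x_n^{e(\lambda,\nu,\mu)}\,\oe_\mu(x_1,\ldots,x_{n-1}),
\end{equation*}
where $\nu$ ranges over (half-)integer sequences of length $n-1$ with non-negative entries interlacing with $\lambda$ (the intermediate $O_{2n-1}$-highest weight), $\mu$ ranges over $O_{2n-2}$-highest weights of length $n-1$ interlacing with $\nu$ (last entry allowed negative), and the exponent $e(\lambda,\nu,\mu)$ reproduces $\sgn(P_{2n-1,1})\sgn(P_{2n-3,1})(r_{2n-1}^+ - 2r_{2n-2}^+ + r_{2n-3}^+)$ of \eqref{wt-even-orthog} when $\lambda$, $\nu$, $\mu$ are identified with rows $2n-1$, $2n-2$, $2n-3$ of a $(2n-1)$-orthogonal pattern. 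Applying the inductive hypothesis to each $\oe_\mu(x_1,\ldots,x_{n-1})$ expresses it as a sum over $(2n-3)$-orthogonal patterns with bottom row $\mu$ or $\mu^-$, and stacking each such sub-pattern below the three prescribed rows produces exactly the elements of $\OP_\lambda \cup \OP_{\lambda^-}$, with matching weights.

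The main difficulty lies in the sign bookkeeping for odd starters. Each branching $O_{2k} \to O_{2k-1}$ introduces a sign choice on the smallest coordinate — this is the combinatorial origin of the union $\OP_\lambda \cup \OP_{\lambda^-}$ — and the product $\sgn(P_{2i-1,1})\sgn(P_{2i-3,1})$ in the weight formula is precisely what arises when one propagates these sign choices through two consecutive restrictions and rewrites the natural alternating form of the branching weight as a single monomial. Verifying this sign identity, together with its degenerate compatibility when some odd starter vanishes (so that the Iverson bracket in the determinantal prefactor collapses a would-be duplication), is the most delicate technical step and would absorb most of the work. An alternative would be to apply the Lindström–Gessel–Viennot lemma to the determinants in the numerator and denominator of $\oe_\lambda$ and biject the resulting non-intersecting lattice-path systems directly with orthogonal patterns; this avoids induction but relocates the same sign subtleties into the path framework.
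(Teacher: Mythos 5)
First, note that the paper does not prove this statement at all: it is imported verbatim from Proctor, cited as the first part of Theorem~7.3 of [Pro94], so there is no internal argument to compare yours against. Your strategy --- induction on $n$ through the two-step restriction $O_{2n}\supset O_{2n-1}\supset O_{2n-2}$ --- is essentially the classical route underlying Proctor's treatment, and your base case $n=1$ is verified correctly, including the role of the prefactor $(1+[\lambda_1\neq 0])$.

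However, the proposal has a genuine gap: the displayed two-step branching identity is the entire content of the theorem, and you assert it rather than derive it. Two specific difficulties are hidden there. First, $\oe_\lambda$ as defined here (with the factor $1+[\lambda_n\neq 0]$) is not an irreducible character when $\lambda_n\neq 0$ but a sum of two $SO(2n)$-irreducibles, so ``deriving the branching from the determinantal formula'' is a genuine computation (row/column manipulations, or the Lindstr\"om--Gessel--Viennot route you mention), not an appeal to standard branching rules. Second, and more seriously, the identity as written does not close the induction: the exponent $e(\lambda,\nu,\mu)$ of $x_n$ depends on $\sgn(P_{2n-3,1})$, i.e.\ on the sign of the last entry of $\mu$, whereas the inductive hypothesis expresses $\oe_{|\mu|}(x_1,\ldots,x_{n-1})$ as a sum over $\OP_{|\mu|}\cup\OP_{|\mu|^-}$, which runs over \emph{both} signs of that same entry. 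The sum over sub-patterns whose bottom odd starter has a fixed sign is not itself an even orthogonal character, so one cannot simply substitute the inductive hypothesis into your displayed formula; the terms must first be regrouped so that the $x_n$-exponent and the sign choice in row $2n-3$ are disentangled, with the degenerate case $\mu_{n-1}=0$ (where $\OP_{|\mu|}$ and $\OP_{|\mu|^-}$ coincide) handled separately. This is exactly the ``sign bookkeeping'' you defer, and it is the substance of the proof rather than a routine verification. (A small cosmetic point: with the paper's convention that $\lambda$ is the \emph{bottom} row, the inductively obtained sub-pattern is stacked above rows $2n-2$ and $2n-1$, not below them.)
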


\begin{example}
Let $n=2$ and $\lambda = (1,1)$. Then the six $3$-orthogonal patterns contributing to $\oe_{(1,1)}(x_1,x_2)$ and their weights are as follows.
\begin{equation}
\begin{array}{cccccc}
\begin{array}{ccc}
-1 \\
& 1 \\
1 && 1
\end{array}
&
\begin{array}{ccc}
0 \\
& 1 \\
1 && 1
\end{array}
&
\begin{array}{ccc}
1 \\
& 1 \\
1 && 1
\end{array}
&
\begin{array}{ccc}
-1 \\
& 1 \\
-1 && 1 
\end{array}
&
\begin{array}{ccc}
0 \\
& 1 \\
-1 && 1 
\end{array}
&
\begin{array}{ccc}
1 \\
& 1 \\
-1 && 1 
\end{array} \\
\hline
\x_1 \x_2 & 1 & x_1 x_2 &
\x_1 x_2 & 1 & x_1 \x_2
\end{array}
\end{equation}
\end{example}

Again we aim to relate the even orthogonal characters to the matching generating functions of certain honeycomb graphs. This relation is not as straightforward as in the case of symplectic characters in Theorem~\ref{prop:symplectic-pattern-character}. 
Let $\htp{}{2n,k}$ be a half-trapezoidal honeycomb graph consisting of $2n-1$ right-justified rows of consecutive hexagons of length $k, k+1, k+1, \dots, k+n-1,k+n-1$ with three extra edges (see Figure~\ref{fig:eg-htp-graph} for an example): one at the end of the top zig-zag row, and two at the beginning and end  of the bottom zig-zag row.  Again, the odd and even rows are aligned separately, and we fix the even rows to be aligned half a hexagon to the right of the odd rows. 

We attach $n$ vertical edges to vertices in the bottom row at positions $p_1 < \dots < p_n$ numbered $0,1, \dots, n+k$ from \emph{right to left} to form the graph $\htp{\mathbf{p}}{2n,k}$. 
In the edge-weighted variant $\htp{\mathbf{p}}{2n,k}(x_1,\dots,x_n)$, each edge of type \swne in row $i$ is weighted $x_i$ and rightmost vertical edges are weighted by $\frac{1}{2}$. The version where the rightmost vertical edge are weighted by $1$ is denoted by $\hhtp{\mathbf{p}}{2n,k}(x_1,\dots,x_n)$.\footnote{Note that combining $\htp{p_1,\dots,p_n}{2n,k}$ with $\htm{q_1,\dots,q_n}{2n,k}$ by adding one edge to each of the $2n$ leftmost vertices of the latter graph, we obtain the trapezoidal graph $\T_{2n,2k+1}^{n+k+1-p_n,n+k+1-p_{n-1},\ldots,n+k+1-p_1,n+k+1+q_1,\dots,n+k+1+q_n}$.}

\begin{figure}[htbp!]
\begin{center}
\includegraphics[scale=1.2]{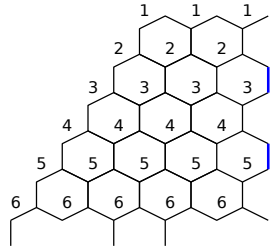}
\caption{The half-trapezoidal honeycomb graph 
$\htp{2,3,5}{6,2}(x_1,\dots,x_6)$. The blue edges on the right are weighted $\frac{1}{2}$.}
\label{fig:eg-htp-graph}
\end{center}
\end{figure}

To prove the main result of this section, we need a preliminary identity involving a subset of orthogonal patterns. We say that an orthogonal pattern is {\em non-negative} if all its entries are non-negative. 

\begin{lem}
\label{prop:even-ortho-nonneg-pattern-character}
For a partition $\lambda = (\lambda_1,\dots,\lambda_n)$, we have
\begin{equation}
 \sum_{\substack{P \in \OP_\lambda \\ \text{$P$ non-negative}}} \woe P  = 
M(\hhtp{\lambda_n,\lambda_{n-1}+1,\dots,\lambda_1+n-1}
 {2n,\lambda_1-1}
 (x_1,\x_1,\dots,x_{n},\x_n)).
\end{equation}
\end{lem}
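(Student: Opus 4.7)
The plan is to adapt the proof of Theorem~\ref{prop:symplectic-pattern-character} to the present setting: I construct a weight-preserving bijection between non-negative $(2n-1)$-orthogonal patterns with bottom row $\lambda$ and perfect matchings of
\[
\hhtp{\lambda_n,\lambda_{n-1}+1,\dots,\lambda_1+n-1}{2n,\lambda_1-1}(x_1,\x_1,\dots,x_n,\x_n).
\]

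First I would check, as in the symplectic case, that any perfect matching of this graph is determined by the positions of its vertical edges, and that the bipartite imbalance of each zig-zag row together with the three additional boundary edges forces the numbers of vertical edges in rows $1,2,\dots,2n$ (counted from the top) to be $0,1,1,2,2,\dots,n-1,n-1,n$, respectively. In particular, the extra edge at the right end of the top zig-zag row forces that row to contain no vertical edge. Reading off the positions of the vertical edges in rows $2,\dots,2n$ (numbered from the right, starting at $0$) and then subtracting $j-1$ from the $j$th $\searrow$-diagonal produces a $(2n-1)$-half pattern in the sense of Definition~\ref{def:nhalf} with non-negative entries whose bottom row, written in increasing order, is exactly $\lambda$; the monotonicity along both diagonals follows from the local hexagonal structure of the matching, exactly as in the proof of Theorem~\ref{prop:symplectic-pattern-character}.

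Next I would verify that this map identifies matchings with the non-negative elements of $\OP_\lambda$. Once all entries are non-negative, the odd-starter condition $|P_{2i-1,1}|\le\min(P_{2i-2,1},P_{2i,1})$ of Definition~\ref{defi_evenpatterns} reduces to $P_{2i-1,1}\le\min(P_{2i-2,1},P_{2i,1})$, which is already implied by the half-pattern diagonal inequalities. Conversely, every non-negative member of $\OP_\lambda$ is a non-negative $(2n-1)$-half pattern with bottom row $\lambda$ and is therefore the image of a unique matching.

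The crux is weight-preservation. Under the specialization $(x_1,\x_1,\dots,x_n,\x_n)$, zig-zag row $2i-1$ contributes a factor of $x_i$ per SW-NE edge while row $2i$ contributes $\x_i$ per SW-NE edge, so the exponent of $x_i$ in the matching weight equals the number of SW-NE edges in row $2i-1$ minus the number in row $2i$. A local analysis of how the positions of vertical edges in adjacent rows determine the slanted edges of the matching shows that, for each zig-zag row $j\ge 2$, the number of SW-NE edges equals $r_{j-2}-r_{j-1}$ up to a constant depending only on $j$ (with the conventions $r_{-1}=r_0=0$ and pattern-row sums indexed so that graph row $j$ corresponds to pattern row $j-1$). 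The differences telescope and the row-dependent constants cancel between rows $2i-1$ and $2i$, yielding the exponent $r_{2i-1}-2r_{2i-2}+r_{2i-3}$ of $x_i$. Since on non-negative patterns the sign factors in~\eqref{wt-even-orthog} are all $+1$ and $r_i^+=r_i$, this is precisely the exponent of $x_i$ in $\woe P$. The main obstacle will be the boundary bookkeeping in this weight calculation: one must show that the three extra edges of $\hhtp{}{2n,k}$ contribute no net exponent to any $x_i$, and the boundary case $i=1$, where the absence of vertical edges in the top zig-zag row combined with the conventions $r_{-1}=r_0=0$ should give the exponent $r_1=P_{1,1}$ of $x_1$, requires particular care and will pin down the geometric role of the extra edge at the top right.
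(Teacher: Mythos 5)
Your proposal is correct and follows essentially the same route as the paper's proof: read off the positions of the vertical matching edges row by row (the top zig-zag row being forced empty by the extra right-hand edge), shift the $\searrow$-diagonals to obtain a non-negative half pattern, and verify weight-preservation by counting the \swne{} edges in rows $2i-1$ and $2i$ and observing that the row sums telescope to $r_{2i-1}-2r_{2i-2}+r_{2i-3}$ while the row-dependent constants cancel. Your added observation that for non-negative patterns the odd-starter condition collapses to the half-pattern interlacing inequalities, and your flagging of the boundary bookkeeping, match what the paper carries out explicitly.
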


We illustrate Lemma~\ref{prop:even-ortho-nonneg-pattern-character} with an example in Figure~\ref{fig:eg-htp-matching} where the underlying graph is the one given in Figure~\ref{fig:eg-htp-graph}.

\begin{figure}[htbp!]
\begin{center}
\begin{tabular}{ccc}
\includegraphics[scale=1]{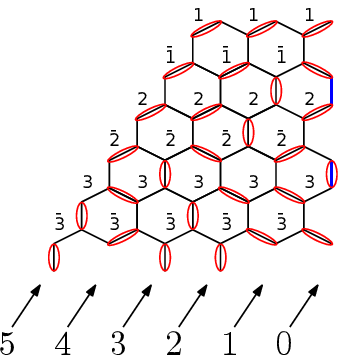} 
&
\hspace*{1cm}
\raisebox{1.8cm}{
\begin{tabular}{cccccc}
3 \\
& 3 \\
1 && 4 \\
& 3 && 5 \\
2 && 3 && 5 
\end{tabular}
}
&
\hspace*{1cm}
\raisebox{1.8cm}{
\begin{tabular}{cccccc}
1 \\
& 1 \\
0 && 2 \\
& 2 && 3 \\
2 && 2 && 3 
\end{tabular}
} \\
(a) & (b) & (c) 
\end{tabular}
\caption{
(a) A matching of the half-trapezoidal honeycomb graph 
$\htp{2,3,5}{6,2}(x_1,\x_1,x_2,\x_2,x_3,\x_3)$ corresponding to the partition $\lambda = (3,2,2)$ with weight 
$(x_1 x_2 \x_3)/2$. The positions of the vertical edges are marked below. 
(b) The positions of the vertical edges in rows $2$ through $6$ arranged in the shape of a $5$-half pattern.
(c) The corresponding $5$-orthogonal pattern with bottom row $\lambda$ is displayed which is obtained by subtracting $i$ from the $i$-th $\searrow$-diagonal, where diagonals are counted from the left starting with $0$.}
\label{fig:eg-htp-matching}
\end{center}
\end{figure}

\begin{proof}
We construct a weight-preserving bijection between $(2n-1)$-orthogonal patterns with all odd starters being non-negative and $\mathcal{M}(\hhtp{\lambda_n,\lambda_{n-1}+1,\dots,\lambda_1+n-1} {2n,\lambda_1-1} (x_1,\x_1,\dots,x_{n},\x_n))$. This proceeds similar to the proof of Theorem~\ref{prop:symplectic-pattern-character}.

As usual, each matching is uniquely determined by the positions of the vertical edges in each row, with there being $ \lfloor \frac{i}{2} \rfloor$ matching edges in the $i$-th row. (Note that the edges in the top zig-zag row are forced because of the jutting edge ``/'' on the right, and, therefore, there are no vertical matching edges in the top row.) Label the positions of the vertical edges starting from the left in each row with $n+k$ and decreasing to the right, and construct an array that has the form of a $(2n-1)$-half pattern by listing the positions of the vertical edges in each row in increasing order. 

We then see that the $\nearrow$-diagonals are strictly increasing, while the $\searrow$-diagonals are weakly increasing. Subtract $i$ from the $i$-th $\searrow$-diagonal, counting from the left starting with $0$. The odd starters are non-negative by construction. 

We now show that this bijection is weight-preserving. 
The formula for the weight of the pattern in \eqref{wt-even-orthog} simplifies since $r_i^+ = r_i$ for each $i$ and the signs are all $1$. In the graphical model, let the positions of the vertical matching edges in the $i$-th row in increasing order (i.e.\ right to left) be $T_{i,1}, \dots, T_{i,\floor{i/2}}$ for $i=1,2,\ldots,2n$ (see the middle column of Figure~\ref{fig:eg-htp-matching}). The corresponding row of the orthogonal pattern $P$ is given by $P_{i-1,1}, \dots, P_{i-1,\floor{i/2}}$, where 
\begin{equation}
\label{PT-relation}
P_{i-1,j} = T_{i,j} - (n-\floor{i/2} + j-1). 
\end{equation}

To compare the exponents of $x_i$, we need to consider the matching edges  of type \swne in rows $2i-1$ and $2i$ of zig-zags. To this end, we need consider the vertically matching edges in rows $2i-2, 2i-1$ and $2i$. These are given by the tuples $(T_{2i-2,1},\dots,T_{2i-2,i-1})$, 
$(T_{2i-1,1},\dots,T_{2i-1,i-1})$ and
$(T_{2i,1},\dots,T_{2i,i})$. The number of matching edges of type \swne in row $2i-1$  is 
\begin{multline}
(T_{2i-2,1} - (n-i+1)) +  (T_{2i-2,2} - T_{2i-1,1} - 1) 
+ \cdots + (T_{2i-2,i-1} - T_{2i-1,i-2} - 1) 
+ (n + \lambda_1 - 1 - T_{2i-1,i-1}) 
 \\ = \sum_{j=1}^{i-1} (T_{2i-2,j} - T_{2i-1,j}) + \lambda_1.
\end{multline}
Similarly, the number of matching edges of type ``/'' in row $2i$ is
\begin{multline}
 (T_{2i-1,1} - T_{2i,1} - 1)
+ \cdots + (T_{2i-1,i-1} - T_{2i,i-1} - 1) 
+ (n + \lambda_1 - 1 - T_{2i,i}) \\
= \sum_{j=1}^{i-1} T_{2i-1,j} - \sum_{j=1}^{i} T_{2i,j}
- i + n + \lambda_1.
\end{multline}
Therefore, the total exponent of $x_i$ is the difference of these expressions, which is
\begin{equation}
\sum_{j=1}^{i-1} T_{2i-2,j} -2 \sum_{j=1}^{i-1} T_{2i-1,j} + \sum_{j=1}^{i} T_{2i,j} - n + i.
\end{equation}
Using \eqref{PT-relation}, this can be shown to be equal to 
$r_{2i-1} - 2r_{2i-2} + r_{2i-3}$, as desired.
\end{proof}

We need to consider the following operation on orthogonal patterns. 
This is a variation of the Bender-Knuth involution~\cite{BenKnu72}.

\begin{prop}
\label{prop:OP-involution}
Let $\lambda = (\lambda_1,\dots,\lambda_n)$ be a partition.
Define the map $J_i$ on orthogonal patterns $\OP_\lambda$ 
for $2 \leq i \leq n$ as follows. For $P \in \OP_\lambda$, $J_i(P)$ leaves all the rows of $P$, except for row $2i-2$ unchanged, and
\begin{equation}
(J_i(P))_{2i-2,j} = \max(|P_{2i-3,j}|,|P_{2i-1,j}|) + \min(P_{2i-3,j+1},P_{2i-1,j+1}) - P_{2i-2,j}, \quad 1 \leq j \leq i-1,
\end{equation}
where we fix $P_{2i-3,i} = +\infty$.
Then the following properties hold for $J_i$.
\begin{itemize}
\item $J_i(P) \in \OP_\lambda$.
\item $J_i$ is an involution.
\item If either of the odd starters in rows $2i-3$ or $2i-1$ is $0$,
then the  exponent of $x_i$ in $\woe{J_i(P)}$ is the negative of the exponent of $x_i$ in $\woe{P}$.
\end{itemize}
\end{prop}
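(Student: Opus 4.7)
The plan is to verify the three assertions in order. For the first, observe that the defining formula is a reflection: if one sets $M_j := \max(|P_{2i-3,j}|, |P_{2i-1,j}|)$ and $m_j := \min(P_{2i-3,j+1}, P_{2i-1,j+1})$, then $(J_i(P))_{2i-2,j} = M_j + m_j - P_{2i-2,j}$, which lies in $[M_j, m_j]$ whenever $P_{2i-2,j}$ does. Hence it suffices to show $M_j \leq P_{2i-2,j} \leq m_j$. For $j \geq 2$ all entries involved are non-negative, so the absolute values drop out and this is just the half-pattern inequality along the $\nearrow$- and $\searrow$-diagonals through $P_{2i-2,j}$. For $j = 1$ the lower bound $M_1 \leq P_{2i-2,1}$ is exactly the extra odd-starter condition $|P_{2i-3,1}|, |P_{2i-1,1}| \leq P_{2i-2,1}$ from Definition~\ref{defi_evenpatterns}, and the convention $P_{2i-3,i} = +\infty$ takes care of the rightmost slot $j = i-1$. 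Since rows $2i-3$ and $2i-1$ are untouched, the odd-starter constraints against the new row $2i-2$ remain valid, because $(J_i(P))_{2i-2,1} \geq M_1 \geq |P_{2i-3,1}|, |P_{2i-1,1}|$.

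The involution property is immediate: applying $J_i$ twice at position $j$ yields $M_j + m_j - (M_j + m_j - P_{2i-2,j}) = P_{2i-2,j}$, and every other row is preserved. For the third assertion, since $J_i$ alters only row $2i-2$ and the $x_k$-exponent in $\woe{P}$ depends only on rows $2k-3, 2k-2, 2k-1$, no exponent other than that of $x_i$ is affected. The sign factor $\sgn(P_{2i-1,1})\sgn(P_{2i-3,1})$ is likewise untouched. Writing $r_{2i-2}^{+\prime}$ for the row-sum after applying $J_i$ (and noting $r^+ = r$ on this non-negative row), the exponent of $x_i$ is negated exactly when
\begin{equation}
r_{2i-2}^+ + r_{2i-2}^{+\prime} \;=\; r_{2i-1}^+ + r_{2i-3}^+.
\end{equation}

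Summing the defining formula over $j = 1, \dots, i-1$ and using $\max(a,b) + \min(a,b) = a+b$ together with a shift of index in the min-sum, the left-hand side evaluates to $\max(|P_{2i-3,1}|, |P_{2i-1,1}|) + \sum_{j=2}^{i-1}(P_{2i-3,j} + P_{2i-1,j}) + P_{2i-1,i}$, where the final term comes from $\min(+\infty, P_{2i-1,i}) = P_{2i-1,i}$. The right-hand side expands to exactly the same expression with $\max(|P_{2i-3,1}|, |P_{2i-1,1}|)$ replaced by $|P_{2i-3,1}| + |P_{2i-1,1}|$. Equality therefore holds precisely when one of the two odd starters vanishes, which is the hypothesis. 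The only real bookkeeping is the pair of edge cases at $j = 1$ (absolute values, and the extra orthogonal-pattern inequality) and $j = i-1$ (the $+\infty$ convention); apart from these I foresee no substantive obstacle.
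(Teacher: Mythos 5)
Your argument is correct and complete: the reflection $P_{2i-2,j}\mapsto M_j+m_j-P_{2i-2,j}$ preserves the interval $[M_j,m_j]$, which by the interlacing and odd-starter inequalities is exactly the admissible range for $P_{2i-2,j}$, and your row-sum computation correctly isolates the defect $\min(|P_{2i-3,1}|,|P_{2i-1,1}|)$, which vanishes under the hypothesis. The paper declares this proof a routine calculation and leaves it to the reader, and what you have written is precisely that intended verification, so there is nothing further to compare.
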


We will only apply the operation $J_i$ to orthogonal patterns such that either of the odd starters in rows $2i-3$ or $2i-1$ is $0$.  The proof of Proposition~\ref{prop:OP-involution} is a routine calculation and left to the interested reader.

We denote by $\mathcal{I}_n$ the set of permutations of 
$
\{x_1,\bar x_1, x_2, \bar x_2, \ldots, x_n, \bar x_n\}
$
that are generated by the transpositions 
$
(x_1, \bar x_1), (x_2, \bar x_2), \ldots, (x_n, \bar x_n).
$
This group is isomorphic to $\mathbb{Z}_2^n$. For $\sigma \in \mathcal{I}_n$, 
$\sigma \, \htp{p_1,\dots,p_n}{2n,k} \allowbreak (x_1,\bar x_1,\dots,x_n,\bar x_n)$ denotes the edge weighted graph $\htp{p_1,\dots,p_n}{2n,k}$ whose row-weights are permuted according to $\sigma$.

\begin{thm}
\label{prop:even-ortho-pattern-character}
For a partition $\lambda = (\lambda_1,\dots,\lambda_n)$, we have
\begin{equation}
\oe_\lambda(x_1,\ldots,x_n)
 = \sum_{\sigma \in \mathcal{I}_n} 
 M(\sigma \, \htp{\lambda_n,\lambda_{n-1}+1,\dots,\lambda_1+n-1}
 {2n,\lambda_1-1}
 (x_1,\x_1,\dots,x_{n},\x_n)).
\end{equation}
\end{thm}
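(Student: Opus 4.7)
The plan is to combine three ingredients: the pattern expansion $\oe_\lambda = \sum_{P \in \OP_\lambda \cup \OP_{\lambda^-}} \woe P$ from Theorem~\ref{even}, the bijection in Lemma~\ref{prop:even-ortho-nonneg-pattern-character} between non-negative orthogonal patterns and matchings of $\hhtp{}{}$-graphs, and the sign-reversing involutions $J_i$ of Proposition~\ref{prop:OP-involution}. The sum over $\sigma\in\mathcal{I}_n$ on the right-hand side will encode, via the bijection $\epsilon\leftrightarrow\tau$ given by $\tau_i=\epsilon_i\epsilon_{i-1}$ with $\epsilon_0=+$, the sign choices available for the odd starters of an orthogonal pattern, while the weight $\tfrac12$ on the rightmost vertical edges of $\htp{}{}$ will compensate for the loss of sign freedom at vanishing odd starters.

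On the matching side, I would first verify that under the bijection of Lemma~\ref{prop:even-ortho-nonneg-pattern-character} an odd starter $P'_{2i-1,1}$ vanishes precisely when the rightmost vertical edge in the corresponding row of the graph lies in the matching. Writing $S(P')=\{i:P'_{2i-1,1}=0\}$ and $A_i(P')=r_{2i-1}-2r_{2i-2}+r_{2i-3}$, summing over $\sigma\in\mathcal{I}_n$ (which swaps $x_i\leftrightarrow\bar x_i$ independently in each pair of consecutive rows) then yields
\begin{equation*}
\sum_{\sigma\in\mathcal{I}_n} M\bigl(\sigma\,\htp{\lambda_n,\ldots,\lambda_1+n-1}{2n,\lambda_1-1}(x_1,\bar x_1,\ldots,x_n,\bar x_n)\bigr) = \sum_{P'}\Bigl(\tfrac12\Bigr)^{|S(P')|}\prod_{i=1}^n\bigl(x_i^{A_i(P')}+\bar x_i^{A_i(P')}\bigr),
\end{equation*}
where $P'$ ranges over non-negative patterns in $\OP_\lambda$. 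On the pattern side, each $P\in\OP_\lambda\cup\OP_{\lambda^-}$ decomposes as a pair $(|P|,\epsilon)$ with $\epsilon_i=\sgn(P_{2i-1,1})$, giving $\woe P = \prod_i x_i^{\tau_i(\epsilon)A_i(|P|)}$ subject to the constraint $\epsilon_i=+$ whenever $i\in S(|P|)$, so Theorem~\ref{even} reads $\oe_\lambda=\sum_{P'}f(P')$ with $f(P')=\sum_{\epsilon\text{ valid}}\prod_i x_i^{\tau_i A_i(P')}$.

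I would then match these two expressions orbit-by-orbit under the commuting involutions $\{J_i:i\in T(P')\}$, where $T(P')=\{i\in\{2,\ldots,n\}:i\in S(P')\text{ or }i-1\in S(P')\}$. By Proposition~\ref{prop:OP-involution} each such $J_i$ preserves $S$ (so every orbit has size $2^{|T|}$), fixes $A_j$ for $j\neq i$, and flips $A_i$; since the factor $x_i^{A_i}+\bar x_i^{A_i}$ is invariant under $A_i\mapsto-A_i$, the matching-side summand is constant on each orbit, and so each orbit contributes exactly $2^{|T|-|S|}\prod_i(x_i^{A_i}+\bar x_i^{A_i})$. A direct factoring, interchanging the inner $\epsilon$-sum with the orbit-sum, shows that the pattern-side orbit sum equals $\prod_{i\in T}(x_i^{A_i}+\bar x_i^{A_i})\cdot\sum_{\epsilon\text{ valid}}\prod_{i\notin T}x_i^{\tau_iA_i}$, so the whole proof reduces to the combinatorial identity
\begin{equation*}
\sum_{\epsilon\text{ valid}} \prod_{i\notin T} x_i^{\tau_i A_i} = 2^{|T|-|S|} \prod_{i\notin T}\bigl(x_i^{A_i}+\bar x_i^{A_i}\bigr).
\end{equation*}

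Establishing this identity is the main obstacle. The tempting argument that the projection $\epsilon\mapsto(\tau_i)_{i\notin T}$ is a uniform surjection with fibers of size $2^{|T|-|S|}$ fails precisely when $1\in S$, since then $\tau_1=+$ is forced and the image in the $\tau_1$-coordinate collapses to $\{+\}$. The saving grace is that $1\in S$ also forces $A_1=r_1^+=|P_{1,1}|=0$, which trivializes the $i=1$ factor to $1$ on both sides of the identity, and which also explains why the involution $J_1$ is deliberately not defined in Proposition~\ref{prop:OP-involution}. My plan is therefore to split on whether $1\in S$: when $1\notin S$ the uniform-fiber claim does hold and can be verified by a short induction on the run structure of $S\subseteq\{1,\ldots,n\}$, while when $1\in S$ the $i=1$ coordinate trivializes and the identity reduces to the $(n-1)$-case by deleting position $1$, completing the proof.
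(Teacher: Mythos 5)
Your proposal uses the same three ingredients as the paper (Theorem~\ref{even}, Lemma~\ref{prop:even-ortho-nonneg-pattern-character}, and the involutions $J_i$), but organizes them quite differently. The paper gives a direct weight-preserving bijection: it fixes a pair $(m,\sigma)$, starts from the non-negative pattern attached to $m$, and then processes $i=1,\dots,n$, using $\epsilon_i$ either to choose the sign of the odd starter $P_{2i-1,1}$ (when it is nonzero) or to decide whether to apply $J_i$ (when it is zero), with the $2^k$-fold replication of patterns having $k$ zero starters absorbing the $\tfrac12$-weights. You instead evaluate both sides as Laurent polynomials, decompose a pattern as $(|P|,\epsilon)$, group the non-negative patterns into orbits of the commuting involutions $\{J_i : i\in T\}$, and reduce everything to an explicit identity about the sign vectors $\epsilon\mapsto\tau$. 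Your route is less bijective in spirit (which is the stated point of the paper) but it isolates cleanly where the $\tfrac12$'s and the factor $2^{|T|-|S|}$ come from; your final identity is correct and is most easily proved by noting that $\epsilon\mapsto(\tau_i)_{i}$ restricted to valid $\epsilon$ is a homomorphism of elementary abelian $2$-groups, hence has uniform fibers over its image, with surjectivity onto the coordinates outside $T$ (except $\tau_1$ when $1\in S$, where $A_1=0$ saves the day exactly as you observe — though note the right-hand factor there is $x_1^0+\bar x_1^0=2$, not $1$; it is cancelled by the $2^{-1}$ contributed to $2^{|T|-|S|}$ by $1\in S$).

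One step is wrong as stated: the action of $\langle J_i : i\in T\rangle$ is not free, so orbits need not have size $2^{|T|}$. For instance a pattern with all entries equal is fixed by every $J_i$, since $(J_i(P))_{2i-2,j}=\max+\min-P_{2i-2,j}$ returns the same value. Your formula ``each orbit contributes exactly $2^{|T|-|S|}\prod_i(x_i^{A_i}+\bar x_i^{A_i})$'' therefore fails literally for such orbits. The repair is routine: the stabilizer is generated by the $J_i$ it contains (if $J_iJ_jP=P$ with $i\neq j$ then comparing row $2i-2$ forces $J_iP=P$), say $F=\{i\in T: J_iP=P\}$, and $J_iP=P$ forces $A_i=0$; the orbit size drops to $2^{|T|-|F|}$, but each $i\in F$ contributes a factor $x_i^{A_i}+\bar x_i^{A_i}=2$ on the matching side, so the orbit contribution is unchanged and your final identity is exactly what is needed in all cases. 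You should also explicitly verify your opening claim that $P'_{2i-1,1}=0$ precisely when the $\tfrac12$-weighted rightmost vertical edge of the corresponding row lies in the matching (it follows from $P_{2i-1,1}=T_{2i,1}-(n-i)$ in the proof of Lemma~\ref{prop:even-ortho-nonneg-pattern-character}, and is consistent with the example in Figure~\ref{fig:eg-htp-matching}). With these two points addressed, your argument is a complete and valid alternative to the paper's bijection.
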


\begin{proof}
We use Theorem~\ref{even} to interpret the left-hand side.  To be more precise, we need to refine this interpretation in the following sense: each orthogonal pattern $P$ appearing on the left-hand side that has precisely $k$ odd starters equal to $0$ is replaced by $2^k$ copies of $P$, the weight of each copy being $\frac{1}{2^k} \woe P$, and we accompany each copy with a (different) $\{0,1\}$-sequence of length $k$ to remedy this modification. 

Fix a pair $(m,\sigma)$, where $m$ is a matching of $\htp{\lambda_n,\lambda_{n-1}+1,\dots,\lambda_1+n-1} {2n,\lambda_1-1}$ and $\sigma \in \mathcal{I}_n$. We give a weight-preserving bijection between such pairs (where the weight is the weight of the matching in $\sigma \, \htp{\lambda_n,\lambda_{n-1}+1,\dots,\lambda_1+n-1}
 {2n,\lambda_1-1}$) and objects from the left-hand side as described in the previous paragraph.

First, let $P$ be the orthogonal non-negative pattern corresponding to $m$ by Lemma~\ref{prop:even-ortho-nonneg-pattern-character}. Now we change inductively, for each $i \in \{1,2,\ldots,n\}$, the odd starter $P_{2i-1}$ (by changing the sign) or row $2i-2$ of $P$ (by applying $J_i$) according to $\sigma$ such that the map is weight-preserving. Here the crucial observation is that the weight of a matching in 
\begin{equation} (x_i,\bar x_{i}) \sigma \, \htp{\lambda_n,\lambda_{n-1}+1,\dots,\lambda_1+n-1}
 {2n,\lambda_1-1} (x_1,\x_1,\dots,x_{n},\x_n)\end{equation} is obtained from its weight in 
 \begin{equation} \sigma \, \htp{\lambda_n,\lambda_{n-1}+1,\dots,\lambda_1+n-1}
 {2n,\lambda_1-1} (x_1,\x_1,\dots,x_{n},\x_n)\end{equation}
 by replacing $x_i$ with $\bar x_i$. 
 
Let $\sigma = (x_1,\bar x_1)^{\epsilon_1} \dots (x_n,\bar x_n)^{\epsilon_n}$ for appropriately chosen $\epsilon_i \in \{0,1\}$. 
For $i=1$, we do the following: If $P_{1,1} > 0$, we change the sign of $P_{1,1}$ iff 
$\epsilon_1=1$; if $P_{1,1}=0$, we leave $P$ unchanged and the corresponding position in the accompanying $\{0,1\}$-sequence is 
$\epsilon_1$. 

 Now suppose we have reached $i>1$. First assume $P_{2i-1,1} > 0$. If $\epsilon_i=0$, we give  $P_{2i-1,1}$  the appropriate sign so that it has the same sign 
 as $P_{2i-3,1}$, otherwise so that it has the opposite sign. 
If, however, 
$P_{2i-1,1}=0$ we apply $J_i$ to $P$ if and only if either $\epsilon_i=1$ and $P_{2i-3,1} \ge 0$, or $\epsilon_i=0$ and $P_{2i-3,1}<0$. If we have applied $J_i$, we record $1$ in the appropriate position of the accompanying $\{0,1\}$-sequence and $0$ otherwise.

To give an example, we consider the matching in Figure~\ref{fig:eg-htp-matching} and  
$\sigma=(x_1, \x_1) (x_3,\x_3)$ to it. The weight is then $(\x_1 x_2 x_3)/2$. Applying the procedure just described, we obtain the following $5$-orthogonal pattern with the same weight
\begin{equation}
\begin{tabular}{cccccc}
$-1$ \\
& $2$ \\
$0$ && $2$ \\
& $2$ && $2$ \\
$2$ && $2$ && $3$ 
\end{tabular}
\end{equation}
and the accompanying $\{0,1\}$-sequence of length $1$ is $(1)$. 
\end{proof}

We now consider the case when the bottom row of the $(2n-1)$-orthogonal pattern consists of half-integers. For this purpose, the following 
variation of Lemma~\ref{prop:even-ortho-nonneg-pattern-character} is helpful.

\begin{lem}
\label{lem:even-ortho-nonneg-pattern-character-half}
For a half-integer partition $\lambda = (\lambda_1,\dots,\lambda_n)$, we have 
\begin{equation}
 \sum_{\substack{P \in \OP_\lambda \\ \text{$P$ non-negative}}} \woe P  = 
\prod_{i=1}^{n} x_i^{1/2} M(\hhtp{\left(\lambda_n - \frac{1}{2}\right),\left(\lambda_{n-1}-\frac{1}{2}\right)+1,\dots,\left(\lambda_1-\frac{1}{2}\right)+n-1}
 {2n,\left(\lambda_1-\frac{1}{2}\right)-1}
 (x_1,\x_1,\dots,x_{n},\x_n)).
\end{equation}
\end{lem}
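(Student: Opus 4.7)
The plan is to reduce the half-integer case to the already established integer case of Lemma~\ref{prop:even-ortho-nonneg-pattern-character} by the obvious shift $P \mapsto P - \tfrac12$ entry-wise. Since $\lambda$ is a half-integer partition and $P \in \OP_\lambda$ is non-negative, all entries of $P$ are half-integers $\ge \tfrac12$, and in particular all odd starters are strictly positive. Setting $P'_{i,j} := P_{i,j} - \tfrac12$ yields an array whose entries are non-negative integers, whose bottom row is $\lambda - \tfrac12$, and that still satisfies the inequalities along $\nearrow$- and $\searrow$-diagonals (they are preserved by a uniform shift). The bound on odd starters $|P'_{2i-1,1}| = P_{2i-1,1} - \tfrac12 \le \min(P_{2i-2,1}, P_{2i,1}) - \tfrac12 = \min(P'_{2i-2,1}, P'_{2i,1})$ also survives, so $P' \in \OP_{\lambda - 1/2}$ is a non-negative integer orthogonal pattern. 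Clearly the map $P \mapsto P'$ is a bijection between the two sets of non-negative patterns.

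Next I would track the weight. Because every entry of $P$ is positive, $\sgn(P_{2i-1,1}) = 1$ for all $i\ge 1$, and with the convention $\sgn(P_{-1,1}) = \sgn(0) = 1$ the sign prefactors in \eqref{wt-even-orthog} are all $+1$ for both $P$ and $P'$. Writing $\ell_i$ for the number of entries in row $i$ of a $(2n-1)$-half pattern, one has $r_i^+(P) = r_i^+(P') + \ell_i/2$ since all entries are non-negative. For $i \ge 1$ one computes
\begin{equation}
\bigl(r_{2i-1}^+ - 2 r_{2i-2}^+ + r_{2i-3}^+\bigr)(P) - \bigl(r_{2i-1}^+ - 2 r_{2i-2}^+ + r_{2i-3}^+\bigr)(P') = \tfrac{i}{2} - (i-1) + \tfrac{i-1}{2} = \tfrac{1}{2},
\end{equation}
using $\ell_{2i-1} = i$ and $\ell_{2i-2} = \ell_{2i-3} = i - 1$ (with the convention $\ell_0 = \ell_{-1} = 0$ handling the base case $i=1$). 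Hence $\woe P = \prod_{i=1}^n x_i^{1/2} \cdot \woe{P'}$.

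Summing the bijection over $P$ and applying Lemma~\ref{prop:even-ortho-nonneg-pattern-character} to the integer partition $\lambda - \tfrac12$ immediately gives
\begin{equation}
\sum_{\substack{P \in \OP_\lambda \\ P \text{ non-negative}}} \woe P = \prod_{i=1}^n x_i^{1/2} \sum_{\substack{P' \in \OP_{\lambda - 1/2} \\ P' \text{ non-negative}}} \woe{P'} = \prod_{i=1}^n x_i^{1/2}\, M\bigl(\hhtp{(\lambda_n - 1/2), (\lambda_{n-1} - 1/2) + 1, \dots, (\lambda_1 - 1/2) + n - 1}{2n, (\lambda_1 - 1/2) - 1}(x_1, \x_1, \dots, x_n, \x_n)\bigr),
\end{equation}
which is precisely the claim. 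The only mildly tricky step is the exponent bookkeeping in the second paragraph; everything else is a direct translation through the shift.
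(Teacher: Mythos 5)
Your proof is correct and follows exactly the paper's route: the paper's own proof consists precisely of the reduction $\sum_{P} \woe P = \prod_{i=1}^{n} x_i^{1/2} \sum_{P'} \woe {P'}$ via the entry-wise shift by $\tfrac12$ together with an appeal to Lemma~\ref{prop:even-ortho-nonneg-pattern-character}. You have simply made explicit the sign and row-length bookkeeping (all signs $+1$, $\ell_{2i-1}=i$, $\ell_{2i-2}=\ell_{2i-3}=i-1$, net exponent shift $\tfrac12$ per variable) that the paper leaves to the reader, and that bookkeeping checks out.
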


\begin{proof} This follows from 
\begin{equation}
\sum_{\substack{P \in \OP_\lambda \\ \text{$P$ non-negative}}} \woe P =
\prod_{i=1}^{n} x_i^{1/2} \sum_{\substack{P \in \OP_{\lambda-\frac{1}{2}} \\ \text{$P$ non-negative}}} \woe P
\end{equation}
and Lemma~\ref{prop:even-ortho-nonneg-pattern-character}.
\end{proof}

We obtain the following combinatorial interpretation for even orthogonal characters for half-integer partitions.

\begin{thm}
\label{prop:even-ortho-pattern-character-half}
For a half-integer partition $\lambda = (\lambda_1,\dots,\lambda_n)$, we have
\begin{equation}
\oe_\lambda(x_1,\ldots,x_n)
 = \sum_{\sigma \in \mathcal{I}_n} \left[ \sigma \prod_{i=1}^{n} x_i^{1/2} \right] \cdot
 M(\sigma \, \hhtp{\left(\lambda_n-\frac{1}{2} \right),\left(\lambda_{n-1}-\frac{1}{2} \right)+1,\dots,\left(\lambda_1-\frac{1}{2} \right)+n-1}
 {2n,\left(\lambda_1-\frac{1}{2} \right)-1}
 (x_1,\x_1,\dots,x_{n},\x_n)),
\end{equation}
where $\sigma \prod\limits_{i=1}^{n} x_i^{1/2}$ is obtained from $\prod\limits_{i=1}^{n} x_i^{1/2}$ by replacing $x_i$ with $\bar x_i$ iff $(x_i, \bar x_i)$ appears in $\sigma$.
\end{thm}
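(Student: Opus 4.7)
The plan is to parallel the proof of Theorem~\ref{prop:even-ortho-pattern-character}, exploiting the key simplification that for a half-integer partition $\lambda$ every odd starter in any pattern $P \in \OP_\lambda \cup \OP_{\lambda^-}$ is a nonzero half-integer. In particular, the Bender--Knuth-style involutions $J_i$ of Proposition~\ref{prop:OP-involution} and the accompanying $\{0,1\}$-sequences used in the integer case will not be needed here.

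I would begin from Theorem~\ref{even}, which gives $\oe_\lambda(x_1,\ldots,x_n) = \sum_{P \in \OP_\lambda \cup \OP_{\lambda^-}} \woe{P}$. Because only odd starters may be negative in an orthogonal pattern, and because no odd starter can vanish for a half-integer pattern, the map $P \mapsto (|P|,S(P))$, where $|P|$ replaces each odd starter by its absolute value and $S(P) \subseteq \{1,\dots,n\}$ records the indices $i$ with $P_{2i-1,1}<0$, is a genuine bijection between $\OP_\lambda \cup \OP_{\lambda^-}$ and the Cartesian product of the set of non-negative patterns in $\OP_\lambda$ with $\{0,1\}^n$.

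The next step is to track how $\woe{P}$ transforms under sign-flipping of odd starters. Writing $a_i := r_{2i-1}^+ - 2r_{2i-2}^+ + r_{2i-3}^+$, the non-negative representative $|P|$ has weight $\prod_{i=1}^n x_i^{a_i}$, and flipping odd starters according to $S$ multiplies the exponent of $x_i$ by $\epsilon_{i-1}\epsilon_i$, where $\epsilon_i = -1$ iff $i \in S$ and $\epsilon_0 = +1$. This is precisely the action on $\prod_i x_i^{a_i}$ of that $\sigma \in \mathcal{I}_n$ which swaps $x_i \leftrightarrow \x_i$ for the indices $i$ with $\epsilon_{i-1}\neq\epsilon_i$. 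The assignment $S \mapsto \{i : \epsilon_{i-1}\neq\epsilon_i\}$ is a bijection of $2^{\{1,\dots,n\}}$ onto itself (its inverse sends $T$ to the set of $i$ with $|T \cap \{1,\dots,i\}|$ odd), so summing over sign-flip sets $S$ is the same as summing over $\sigma \in \mathcal{I}_n$.

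Finally, I would apply Lemma~\ref{lem:even-ortho-nonneg-pattern-character-half} to replace the inner sum over non-negative patterns by $\prod_i x_i^{1/2}$ times a single matching generating function, then apply $\sigma$ to both sides and sum over $\sigma \in \mathcal{I}_n$. The left-hand side reassembles into $\oe_\lambda$, and on the right $\sigma$ acts both on the prefactor, yielding $\sigma\prod_i x_i^{1/2}$, and on the edge-weights of the honeycomb graph, producing the claimed formula. The only point that requires care is the bookkeeping of the $\sigma$-action on the prefactor $\prod_i x_i^{1/2}$, which is precisely why the statement spells it out as $\sigma\prod_i x_i^{1/2}$; beyond this no combinatorial input is needed that was not already present in the integer case.
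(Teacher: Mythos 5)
Your proposal is correct and follows essentially the same route as the paper: the paper's proof consists of the remark that one repeats the argument for Theorem~\ref{prop:even-ortho-pattern-character} (decompose via Theorem~\ref{even} into non-negative patterns times sign choices for the $n$ odd starters, match the sign data to $\sigma\in\mathcal{I}_n$, and invoke Lemma~\ref{lem:even-ortho-nonneg-pattern-character-half}), noting only that the involutions $J_i$ and the auxiliary $\{0,1\}$-sequences are unnecessary since no odd starter vanishes. Your explicit bijection $S\mapsto\{i:\epsilon_{i-1}\neq\epsilon_i\}$ is exactly the inverse of the paper's inductive sign-assignment procedure, so you have merely written out the details the paper leaves implicit.
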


\begin{proof} The proof is similar to the proof of Theorem~\ref{prop:even-ortho-pattern-character}, but considerably simpler because there are no starters equal to $0$. In particular, the maps $J_i$ defined in Proposition~\ref{prop:OP-involution} are not needed.
\end{proof}

\subsection{The odd orthogonal groups}

\begin{defi}[Split orthogonal patterns]
\label{defi_splitpatterns}
Let $n$ be a positive integer. A {\em $(2n)$-split orthogonal (Gelfand-Tsetlin) pattern} $P$ is a $(2n)$-half pattern in which the entries, except for the odd starters, are either all non-negative integers or all non-negative half-integers; each starter is independently either a non-negative integer or a non-negative half-integer.
The {\em weight} $\woo P$ of a $(2n)$-split pattern $P$ is given by
\begin{equation}
\woo P = \prod_{i=1}^n x_i^{r_{2i}-2r_{2i-1}+r_{2i-2}},
\end{equation}
where we set $r_0 = 0$.
\end{defi}

Denote the set of all $(2n)$-split orthogonal patterns with bottom row $\lambda$ in increasing order as $\SOP_\lambda$.

\begin{thm}[{\cite[First part of Theorem 7.1]{Pro94}}]
Let $\lambda$ be a partition with $n$ parts. 
Then 
\begin{equation}
\oo_\lambda(x_1,\ldots,x_n) = \sum_{P \in \SOP_\lambda} \woo P.
\end{equation}
\end{thm}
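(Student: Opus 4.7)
The plan is to proceed by induction on $n$, leveraging the branching rule $SO_{2n+1} \downarrow SO_{2n-1}$ that factors through $O_{2n}$. The two rows $2n-1$ and $2n$ of a $(2n)$-split orthogonal pattern encode the two steps of this chain, and the freedom for each odd starter to be independently an integer or a half-integer accounts precisely for the spin-type constituents appearing in the intermediate $O_{2n}$ step.

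For the base case $n=1$, a $(2)$-split orthogonal pattern consists of $P_{2,1} = \lambda_1$ together with an odd starter $P_{1,1} \in \{0, 1/2, 1, 3/2, \ldots, \lambda_1\}$, contributing weight $x_1^{\lambda_1 - 2 P_{1,1}}$. Summing over the $2\lambda_1+1$ allowed values of $P_{1,1}$ gives the telescoping geometric series
\begin{equation}
\sum_{k=0}^{2\lambda_1} x_1^{\lambda_1 - k} = \frac{x_1^{\lambda_1+1/2} - \x_1^{\lambda_1+1/2}}{x_1^{1/2} - \x_1^{1/2}} = \oo_{\lambda_1}(x_1),
\end{equation}
matching the bialternant formula for $\oo_\lambda$ with a single variable.

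For the inductive step, I would fix $\lambda$ and condition on the choice of row $2n-2$, call it $\mu$. The top $2n-2$ rows then form an arbitrary $(2n-2)$-split orthogonal pattern with bottom row $\mu$, and their contribution is $\oo_\mu(x_1,\ldots,x_{n-1})$ by the inductive hypothesis. What remains is to sum over all valid rows $2n-1$ (an $n$-tuple of non-negative integers or half-integers interlacing both $\lambda$ and $\mu$, with odd starter independently integer or half-integer) against the $x_n$-weight $x_n^{r_{2n} - 2r_{2n-1} + r_{2n-2}}$ and to recognize the resulting expression as the single-variable $SO_{2n+1}/SO_{2n-1}$ branching coefficient against $\oo_\mu$. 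This reduces to a multiple geometric-series identity that generalizes the base-case computation.

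The main obstacle is the explicit verification of this one-variable branching identity, since it must reconcile the interlacing constraints on row $2n-1$ with the two independent choices (integer vs.\ half-integer) for each odd starter in that row. A cleaner alternative, and essentially the route taken by Proctor, is to apply the Lindström–Gessel–Viennot lemma directly to the bialternant in the definition of $\oo_\lambda$: the quotient of determinants expands as a signed sum over non-intersecting families of lattice paths that are in weight-preserving bijection with $(2n)$-split orthogonal patterns, thereby bypassing the explicit branching calculation and yielding the identity in one step.
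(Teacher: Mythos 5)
The paper does not prove this statement at all: it is imported verbatim from Proctor \cite[Theorem 7.1]{Pro94}, so there is no internal argument to measure your attempt against. Judged on its own, your proposal is a plan rather than a proof, and the gap sits exactly where you yourself locate it. The base case is correct, but for $n\ge 2$ the entire content of the theorem is concentrated in the identity
\begin{equation*}
\oo_\lambda(x_1,\ldots,x_n)\;=\;\sum_{\mu}\Bigl(\sum_{\nu}x_n^{\,|\lambda|+|\mu|-2|\nu|}\Bigr)\,\oo_\mu(x_1,\ldots,x_{n-1}),
\end{equation*}
where $\nu$ runs over the admissible rows $2n-1$ (interlacing $\lambda$ from below and interlaced by $\mu$, with the smallest entry $\nu_n$ allowed to be a half-integer). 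The inner sum over $\nu$ does factor into independent geometric series, since the interlacing constraints pin each $\nu_i$ to an interval $[\max(\lambda_{i+1},\mu_i),\min(\lambda_i,\mu_{i-1})]$; that part is routine. What is not routine is showing that the resulting sum over $\mu$ reproduces $\oo_\lambda(x_1,\ldots,x_n)$. That statement \emph{is} the $SO_{2n+1}\downarrow SO_{2n-1}$ branching rule in precisely the combinatorial form you need, i.e.\ it is the one-variable instance of the theorem being proved; you cannot simply ``recognize'' it without either citing the branching rule in this exact form (which begs the question, since Proctor's theorem is the iterated form of that rule) or carrying out the determinant computation — substituting the bialternant for $\oo_\mu$, exchanging the $\mu$-sum with the determinant expansion, and telescoping column by column against the geometric factors. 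That computation is standard but genuinely nontrivial, and it is entirely absent.

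The fallback via Lindstr\"om--Gessel--Viennot is likewise only gestured at: you would need to specify a path model in which the numerator entries $x^{m}-x^{-m}$ acquire a reflection interpretation, show that the surviving non-intersecting families biject weight-preservingly with split orthogonal patterns including the integer/half-integer freedom of the odd starters, and dispose of the denominator determinant; none of these steps is indicated, and each hides real work. The attribution is also off: Proctor's route in \cite{Pro94} is the iterated branching-rule argument (your first strategy), not LGV. So the proposal identifies the correct architecture but leaves its load-bearing step unproved.
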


\begin{example}
Let $n=2$ and $\lambda = (1,0)$. Then the five $4$-split orthogonal patterns contributing to $\oo_{(1,0)}(x_1,x_2)$ and their weights are
\begin{equation}
\begin{array}{ccccc}
\begin{array}{cccc}
1 \\
& 1 \\
0 && 1 \\
& 0 && 1
\end{array}
&
\begin{array}{cccc}
1/2 \\
& 1 \\
0 && 1 \\
& 0 && 1
\end{array}
&
\begin{array}{cccc}
0 \\
& 1 \\
0 && 1 \\
& 0 && 1
\end{array}
&
\begin{array}{cccc}
0 \\
& 0 \\
0 && 1 \\
& 0 && 1
\end{array}
&
\begin{array}{cccc}
0 \\
& 0 \\
0 && 0 \\
& 0 && 1
\end{array} \\
\hline
\x_1 & 1 & x_1 & \x_2 & x_2 
\end{array}
\end{equation}
\end{example}

Let $\hhtm{p_1,\dots,p_n}{2n,k}(x_1,\dots,x_{2n})$ denote the weighted graph obtained from $\htm{p_1,\dots,p_n}{2n,k}(x_1,\dots,x_{2n})$ by adding $1$ to the weight of the first edge in the even rows of zigzags (which is always an edge of type \senw), so that this edge in row $2i$ then has weight $ x_{2 i} + 1$. Similarly to Theorem~\ref{prop:symplectic-pattern-character}, one can show the following.

\begin{thm}
\label{oddorth} For a partition $\lambda=(\lambda_1,\ldots,\lambda_n)$, we 
have 
\begin{equation}\oo_\lambda(x_1,\ldots,x_n) = \m(\hhtm{\lambda_n+1,\lambda_{n-1}+2,\ldots,\lambda_1+n}{2n,\lambda_1}(x_1,\bar x_1,\ldots,x_n,\bar x_n)).
\end{equation}
\end{thm}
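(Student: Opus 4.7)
The plan is to adapt the bijection from the proof of Theorem~\ref{prop:symplectic-pattern-character} to the setting of split orthogonal patterns. The only new combinatorial feature is that odd starters $P_{2i-1,1}$ are independently allowed to be non-negative integers or non-negative half-integers. On the graphical side, $\hhtm{}{}$ differs from $\htm{}{}$ only by boosting the weight of the first (leftmost) edge of each even zig-zag row $2i$ from $\bar x_i$ to $\bar x_i + 1$, and the aim is to show that the additional ``$+1$'' summand corresponds exactly to the half-integer option on $P_{2i-1,1}$.

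First I would expand the matching generating function binomially. For a matching $m$ of $\hhtm{}{}$, let $S(m) \subseteq \{1,\ldots,n\}$ denote the set of indices $i$ for which the first edge of row $2i$ lies in $m$; the contribution $\bar x_i + 1$ of such an edge may be split as $\bar x_i$ or $1$, giving
\begin{equation}
\m(\hhtm{}{}(x_1,\bar x_1,\ldots,x_n,\bar x_n)) = \sum_{m} \sum_{T \subseteq S(m)} w_T(m),
\end{equation}
where $w_T(m)$ is the weight of $m$ in $\htm{}{}$ times $\prod_{i \in T} x_i$ (replacing the factor $\bar x_i$ by $1$ on an edge in the matching multiplies its weight by $x_i$).

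Next I would apply the symplectic bijection of Theorem~\ref{prop:symplectic-pattern-character} to $m$ to obtain a $(2n)$-symplectic pattern $P$, and from the pair $(P, T)$ produce a split orthogonal pattern $Q$ by decreasing $P_{2i-1,1}$ by $\tfrac{1}{2}$ for each $i \in T$. Weight preservation is a one-line computation: decreasing $P_{2i-1,1}$ by $\tfrac{1}{2}$ decreases $r_{2i-1}$ by $\tfrac{1}{2}$, so the exponent $r_{2i} - 2 r_{2i-1} + r_{2i-2}$ of $x_i$ in $\woo Q$ grows by $1$, multiplying the pattern weight by $x_i$; this matches the factor $\prod_{i \in T} x_i$ on the graph side. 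In the reverse direction, every split orthogonal pattern $Q \in \SOP_\lambda$ determines $T = \{i : Q_{2i-1,1}\text{ is a half-integer}\}$, and adding $\tfrac{1}{2}$ to each such starter returns the symplectic pattern $P$ with its corresponding matching $m$; the inequalities along the $\nearrow$- and $\searrow$-diagonals through $P_{2i-1,1}$ are preserved since the entries $P_{2i-2,1}$ and $P_{2i,1}$ are integers at least $P_{2i-1,1}$.

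The main obstacle will be the local compatibility check. On the graph side, the $1$-option is only available for $i \in S(m)$ (the first edge of row $2i$ must be in the matching). On the pattern side, subtracting $\tfrac{1}{2}$ from $P_{2i-1,1}$ produces a valid element of $\SOP_\lambda$ precisely when $P_{2i-1,1} \geq 1$. The bijection therefore rests on the identification $S(m) = \{i : P_{2i-1,1} \geq 1\}$ under the symplectic bijection of Theorem~\ref{prop:symplectic-pattern-character}. Proving this equivalence requires a careful local analysis of the matching $m$ near the leftmost vertices of each pair of zig-zag rows $(2i-1, 2i)$ in $\htm{}{}$: one must determine, from the position $T_{2i-1,1}$ of the leftmost vertical matching edge in row $2i-1$ and from the convention that the odd rows are shifted half a hexagon to the left of the even rows, whether the first edge of row $2i$ is forced to be, or forbidden from being, in $m$. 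Once this step is in place, the map $(m,T) \mapsto Q$ and its inverse give a weight-preserving bijection, finishing the proof.
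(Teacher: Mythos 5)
Your proposal is correct and takes essentially the same route as the paper: the paper also reduces to the symplectic bijection of Theorem~\ref{prop:symplectic-pattern-character}, views rounding up the half-integer odd starters as a $2^r$-to-$1$ map compensated by the weight $\bar x_i+1$, and rests on exactly the local fact you flag as the remaining obstacle, namely that the first edge of row $2i$ is in the matching if and only if the starter in row $2i-1$ is non-zero (which the paper likewise asserts without detailed verification).
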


\begin{proof} For a $(2n)$-split orthogonal pattern, we add $\frac{1}{2}$ to the odd starters that are half-integers to obtain a $(2n)$-symplectic pattern. Using Theorem~\ref{prop:symplectic-pattern-character}, we then obtain the corresponding matching of 
\begin{equation}\htm{\lambda_n+1,\lambda_{n-1}+2,\ldots,\lambda_1+n}{2n,\lambda_1}(x_1,\bar x_1,\ldots,x_n,\bar x_n).\end{equation} 
This is clearly a 
$2^r$-to-$1$ mapping, where $r$ is the number of non-zero odd starters. The weight $\bar x_i +1$ of the first edge of type \senw in row 
$2i$ of 
\begin{equation}\hhtm{\lambda_n+1,\lambda_{n-1}+2,\ldots,\lambda_1+n}{2n,\lambda_1}(x_1,\bar x_1,\ldots,x_n,\bar x_n)\end{equation}
remedies this for the following reason. 
This edge is in the matching if and only if the starter in row $2i-1$ is non-zero. The weight of a $(2n)$-split orthogonal pattern where the odd starter in row $2i-1$ is a half integer is obtained from the weight of the pattern with this odd starter rounded up by multiplying with $x_i$.
\end{proof}

\section{Symmetrizing the weights}
\label{sec_symmetrizing}

Now that we have combinatorial realizations of all the quantities in Theorem~\ref{thm:fact1}, we can proceed with our combinatorial proof. The proof consists of two major steps. The purpose of this section is to provide necessary ingredients for the first step. The second step is then the application of Ciucu's 
\emph{factorization theorem for graphs with reflective symmetry}  \cite{Ciu97}. The unweighted graphs $\T_{n,k}^{\mathbf{p}}$ that are relevant for  Theorem~\ref{thm:fact1} are indeed already endowed with such a reflective symmetry. However, the edge weights of 
$\T_{n,k}^{\mathbf{p}}(x_1,\ldots,x_n)$ are not symmetric and Ciucu's theorem is only applicable if they are. In this section, we show how, in the context of the first part of Theorem~\ref{thm:fact1}, it is possible to reduce the problem to graphs where the edge weights are also symmetric. (Later we will see that the underlying procedure is also useful for proving the second part of Theorem~\ref{thm:fact1}.)
For this purpose, it is necessary to introduce the following different edge weights of $\T_{n,k}^{\mathbf{p}}$.

\begin{defi}[$\ST_{n,k,j}^{\mathbf{p}}(x_1,\ldots,x_n)$] 
Let $\ell_j$ denote the vertical line that contains the $j$-th vertex in the top row, counted from the left, of $\T_{n,k}^{p_1,\ldots,p_n}$. To the left of $\ell_j$, the weights of 
$\ST_{n,k,j}^{\mathbf{p}}(x_1,\ldots,x_n)$ coincide with the weights of  $\T_{n,k,j}^{\mathbf{p}}(x_1,\ldots,x_n)$, and to the right of $\ell_j$ the edges of type \senw are assigned the weight $x_i$ in row $i$, while all other edges are assigned the weight $1$. 
\end{defi} 

See Figure~\ref{ST68} for  $\ST_{6,8,4}^{1,4,7,8,11,14}(x_1,\ldots,x_6)$. 

\begin{figure}
\scalebox{0.35}{
\psfrag{1}{\Large$1$}
\psfrag{2}{\Large$2$}
\psfrag{3}{\Large$3$}
\psfrag{4}{\Large$4$}
\psfrag{5}{\Large$5$}
\psfrag{6}{\Large$6$}\includegraphics{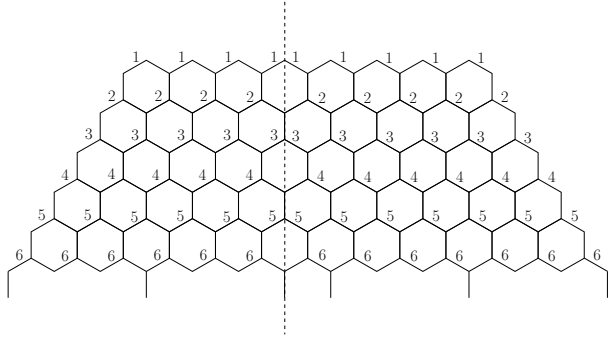}}
\caption{\label{ST68} The graph $\ST_{6,8,4}^{1,4,7,8,11,14}(x_1\ldots,x_6)$.}
\end{figure}

In the following, it is convenient to use a concept which we call \emph{randomized bijection}: Suppose $A$ and $B$ are two finite sets, then a randomized map from $A$ to $B$ is a randomized algorithm that assigns to each element $a \in A$ an element $b \in B$ with some probability $p_{a,b}$ such that
\begin{equation}
\sum_{b \in B} p_{a,b} = 1
\end{equation}
for all $a \in A$. We say that a randomized map is a randomized bijection, if there exists a randomized map from $B$ to $A$ such that the corresponding randomized algorithm sends $b \in B$ to $a \in A$ with probability $p_{a,b}$ (which implies then also $\sum_{a \in A} p_{a,b} = 1$ for all $b \in B$). 
A randomized bijection can only exist if $A$ and $B$ have the same cardinality, as 
\begin{equation}
|A| = \sum_{a \in A} 1 = \sum_{a \in A} \sum_{b \in B} p_{a,b} = \sum_{b \in B} \sum_{a \in A} p_{a,b} = \sum_{b \in B} 1 = |B|.
\end{equation} 
This concept can also be used to show that the generating functions of two sets $A$ and $B$ are the same: if a randomized bijection from $A$ and $B$ is weight-preserving (that is $p_{a,b}=0$ unless $\w(a)=\w(b)$), then it is a randomized bijection between the subset of elements of $A$ that have a prescribed weight $w$ and the subset of elements of $B$ that have the same weight $w$, for every possible weight $w$.

\begin{lem} 
\label{line}
Let $1 \le j \le k$. Then the matching generating function \begin{equation}\m(\T_{2n,k}^{\mathbf{p}}(x_1,\bar x_1,\ldots,x_{n}, \bar x_{n}))\end{equation} is equal to 
\begin{equation}
\frac{1}{2^{n}}  \sum_{\sigma \in \mathcal{I}_n}  \m(\sigma \, \ST_{2n,k,j}^{\mathbf{p}}(x_1,\bar x_1,\ldots,x_n, \bar x_n)).
\end{equation}
\end{lem}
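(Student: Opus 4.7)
The plan is to prove this identity via a weight-preserving randomized bijection. The crucial initial observation is that $\T_{2n,k}^{\mathbf{p}}$ and $\ST_{2n,k,j}^{\mathbf{p}}$ coincide as unweighted graphs and differ only in their edge weights, so they share the same set of matchings. For a matching $M$, let $a_i^{L/R}(M)$ and $b_i^{L/R}(M)$ count the \swne- and \senw-matching edges of $M$ in row $i$, split according to $\ell_j$. A direct calculation yields
\[
w_{\T}(M) = \prod_{i=1}^n x_i^{a_{2i-1}(M)-a_{2i}(M)}, \qquad w_{\sigma \ST}(M) = \prod_{i=1}^n x_i^{\varepsilon_i(A_i(M)-B_i(M))},
\]
where $A_i = a_{2i-1}^L + b_{2i-1}^R$, $B_i = a_{2i}^L + b_{2i}^R$, and $\varepsilon_i \in \{+1,-1\}$ records whether $\sigma$ fixes or swaps the pair $(x_i, \bar x_i)$. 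Summing over $\sigma \in \mathcal{I}_n$ gives
\[
\tfrac{1}{2^n}\sum_{\sigma} w_{\sigma \ST}(M) = \prod_i \tfrac{x_i^{A_i-B_i}+x_i^{B_i-A_i}}{2},
\]
so the assertion reduces to the polynomial identity
\[
\sum_M \prod_i x_i^{a_{2i-1}(M)-a_{2i}(M)} \;=\; \tfrac{1}{2^n}\sum_{\eta \in \{\pm 1\}^n}\sum_M \prod_i x_i^{\eta_i(A_i(M)-B_i(M))}.
\]

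To prove the reduced identity, I would construct, for each $\eta \in \{\pm 1\}^n$, a bijection $\phi_\eta$ on the set of matchings satisfying $(a_{2i-1}-a_{2i})(\phi_\eta M) = \eta_i(A_i(M)-B_i(M))$ for every $i$. The $\eta$-summand on the right then rewrites as $\sum_M \prod_i x_i^{a_{2i-1}(M)-a_{2i}(M)}$, and averaging over $\eta$ closes the argument. The bijection $\phi_\eta$ is to be local and decomposed across $i$: for each $i$ independently, it modifies $M$ only inside the ``$i$-strip'', consisting of zigzag rows $2i-1$ and $2i$ together with the hexagon row between them restricted to the right of $\ell_j$, while leaving the boundary data on $\ell_j$ and at rows $2i-2, 2i+1$ unchanged. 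Equivalently, one obtains a randomized bijection in the paper's sense by tossing a fair coin independently for each $i$, either applying the local transformation (and recording $i$ in a subset $S \subseteq \{1,\ldots,n\}$) or leaving $M$ alone, with output pair $(M', \sigma_S)$ where $\sigma_S=\prod_{i\in S}(x_i, \bar x_i)\in\mathcal{I}_n$.

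The main obstacle is to construct the local transformation correctly. Since the right-of-$\ell_j$ portion of the graph is not in general reflectively symmetric, this transformation cannot be realized as a naive graph reflection. The key structural fact is that in any zigzag row, between two consecutive vertical matching edges, the forced zigzag matching edges are of one single type (either all \swne or all \senw), determined by the parity of the left endpoint. Consequently, a suitable shift of the positions of the vertical matching edges in rows $2i-1$ and $2i$ within the $i$-strip swaps \swne-runs with \senw-runs, interchanging $a_{2i-1}^R \leftrightarrow b_{2i-1}^R$ and $a_{2i}^R \leftrightarrow b_{2i}^R$; when $\eta_i = -1$, one further composes with an operation that effects, at the level of counts, the row swap $2i-1 \leftrightarrow 2i$. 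Verifying that these shifts define well-posed involutions producing valid perfect matchings with the prescribed boundary, and that they yield the correct weight behaviour, will constitute the bulk of the technical work.
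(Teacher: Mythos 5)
Your reduction of the lemma to the polynomial identity
\begin{equation*}
\sum_M \prod_i x_i^{a_{2i-1}(M)-a_{2i}(M)} \;=\; \tfrac{1}{2^n}\sum_{\eta \in \{\pm 1\}^n}\sum_M \prod_i x_i^{\eta_i(A_i(M)-B_i(M))}
\end{equation*}
is correct, as are the preliminary observations ($\T$ and $\ST$ share the same matchings; zig-zag matching edges between consecutive vertical matching edges form runs of a single type). The gap is in your plan for proving the reduced identity: the bijections $\phi_\eta$ you postulate do not exist, because the individual $\eta$-summands on the right are in general \emph{not} equal to the left-hand side --- only their average is. Concretely, take $n=1$, $k=2$, $\mathbf{p}=(1,4)$, $j=1$, so that $\m(\T_{2,2}^{1,4}(x_1,\x_1))=s_{(2,0)}(x_1,\x_1)=x_1^2+1+\x_1^2$. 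The three matchings (indexed by the position $q+1\in\{1,2,3\}$ of the vertical matching edge at the top level) have $(a_1,a_2)=(0,2),(1,1),(2,0)$ and hence $\T$-weights $\x_1^{2},1,x_1^{2}$, but their values of $(A_1,B_1)$ are $(2,0),(2,0),(1,1)$, so $\m(\ST_{2,2,1}^{1,4}(x_1,\x_1))=2x_1^2+1$ and $\m(\sigma\,\ST_{2,2,1}^{1,4}(x_1,\x_1))=2\x_1^2+1$ for $\sigma=(x_1,\x_1)$. The average is indeed $x_1^2+1+\x_1^2$, but for $\eta=+1$ the multiset $\{A_1-B_1\}=\{2,2,0\}$ differs from $\{a_1-a_2\}=\{-2,0,2\}$, so no bijection with your exponent condition can exist; the same happens for the centered line used later in the application (e.g.\ $k=3$, $\mathbf{p}=(1,5)$, $j=2$ gives $\m(\ST)=2x_1+2x_1^3$ against $\m(\T)=x_1+\x_1+x_1^3+\x_1^3$).

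The point your argument misses is that the correspondence must be allowed to mix the $\eta$'s: the paper's randomized bijection maps $\bigcup_{\sigma}\mathcal{M}(\sigma\,\T_{2n,k}^{\mathbf{p}})$ to $\bigcup_{\sigma}\mathcal{M}(\sigma\,\ST_{2n,k,j}^{\mathbf{p}})$ \emph{without} respecting the decomposition by $\sigma$. In particular the coin toss in the odd hexagon row $2i-1$ is not always free --- when both vertices of that row lying on $\ell_j$ are matched into the same side, the side on which one reflects is forced --- and the output weighting $\sigma'$ is determined jointly by the matching and the choices rather than prescribed in advance. Your ``equivalent'' randomized reformulation, which tosses a fair coin independently for every $i$ and outputs a freely chosen $\sigma_S$, inherits exactly the same defect as the deterministic version and would prove the false statement that each summand $\m(\sigma\,\ST_{2n,k,j}^{\mathbf{p}}(x_1,\x_1,\ldots,x_n,\x_n))$ equals $\m(\T_{2n,k}^{\mathbf{p}}(x_1,\x_1,\ldots,x_n,\x_n))$. (You would also need, as the paper does, the identity $\m(\sigma\,\T_{2n,k}^{\mathbf{p}})=\m(\T_{2n,k}^{\mathbf{p}})$, coming from the symmetry of the Schur polynomial, to convert the lemma into a statement about the two unions.)
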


\begin{proof} 
By Theorem~\ref{schur-matching} and the symmetry of the Schur polynomial, 
\begin{equation}
\m(\sigma  \, \T_{2n,k}^{\mathbf{p}}(x_1,\bar x_1,\ldots,x_{n}, \bar x_{n})) =
\m( \T_{2n,k}^{\mathbf{p}}(x_1,\bar x_1,\ldots,x_{n}, \bar x_{n}))
\end{equation}
for any $\sigma \in \mathcal{I}_n$. It follows that 
\begin{equation}
\m( \T_{2n,k}^{\mathbf{p}}(x_1,\bar x_1,\ldots,x_{n}, \bar x_{n})) = \frac{1}{2^n} \sum_{\sigma \in \mathcal{I}_n} \m(\sigma \, \T_{2n,k}^{\mathbf{p}}(x_1,\bar x_1,\ldots,x_{n}, \bar x_{n}))
\end{equation}
and therefore it suffices to show 
\begin{equation}
\sum_{\sigma \in \mathcal{I}_n} \m(\sigma \, \T_{2n,k}^{\mathbf{p}}(x_1,\bar x_1,\ldots,x_{n}, \bar x_{n})) = 
\sum_{\sigma \in \mathcal{I}_n} \m(\sigma \ST_{2n,k,j}^{\mathbf{p}}(x_1,\bar x_1,\ldots,x_{n}, \bar x_{n})).
\end{equation}
We construct a weight-preserving randomized bijection from 
\begin{equation}
\bigcup_{\sigma \in \mathcal{I}_n} \mathcal{M}(\sigma  \, \T_{2n,k}^{\mathbf{p}}(x_1,\bar x_1,\ldots,x_{n}, \bar x_{n}))
\end{equation}
to 
\begin{equation}
\bigcup_{\sigma \in \mathcal{I}_n} \mathcal{M}(\sigma \,  \ST_{2n,k,j}^{\mathbf{p}}(x_1,\bar x_1,\ldots,x_{n}, \bar x_{n})).
\end{equation}
Note that each element in the two unions consists essentially of a pair of a matching of $\T_{2n,k}^{\mathbf{p}}$ and a certain weighting of the edges of the graph which is prescribed by $\sigma$.

We explain the weight-preserving randomized bijection also with the help of the example given in Figure~\ref{T65_matching}. The first modification is already indicated in blue there: with the exception of the top row and the bottom row, each row of hexagons has \emph{precisely} two vertices of degree $2$ (top and bottom row have more vertices of degree two); corresponding vertices of degree $2$ also exist in the top and bottom row. We add paths of length $2$ at these vertices in rows $1,3,\ldots,2n-1$ and extend the matching accordingly. 

\begin{figure}
\scalebox{0.5}{
\psfrag{1}{\large$1$}
\psfrag{1'}{\large$\overline{1}$}
\psfrag{2}{\large$2$}
\psfrag{2'}{\large$\overline{2}$}
\psfrag{3}{\large$3$}
\psfrag{3'}{\large$\overline{3}$}
\includegraphics{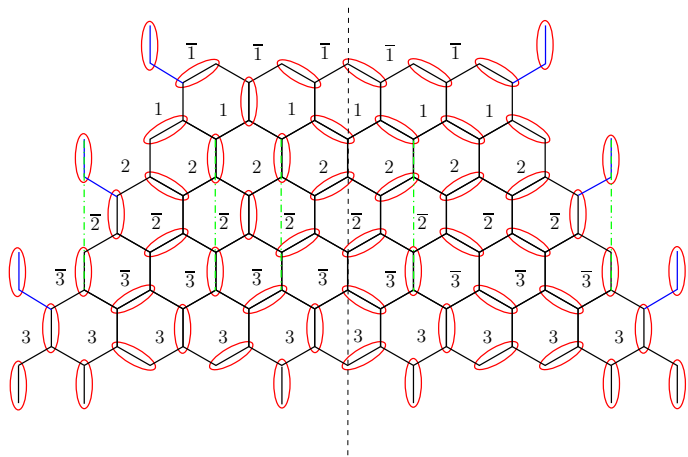}}
\caption{\label{T65_matching} The running example in the proof of Lemma~\ref{line}: a matching of $\T_{6,5}^{1,2,5,7,10,11}(\bar x_1,x_1,x_2,\bar x_2,\bar x_3,x_3)$ }\end{figure}

We consider the subgraph $R_{2i-1}$ that consists of a fixed odd row $2i-1$ of hexagons together with the edges just added so that $R_{2 i-1}$ has a half hexagon on the left side and a half hexagon on the right side. Further we consider the following subdivision of $R_{2 i-1}$ into sections that start and end with half hexagons: The dividing lines are the vertical lines that go through two vertices of $R_{2 i-1}$, where at least one of them is matched to a vertex not contained in $R_{2 i-1}$. In Figure~\ref{T65_matching}, these dividing lines are indicated in green for row $3$. In principle, there are three types of sections as indicated in Figure~\ref{threetypes}.

\begin{figure}
\scalebox{0.5}{\includegraphics{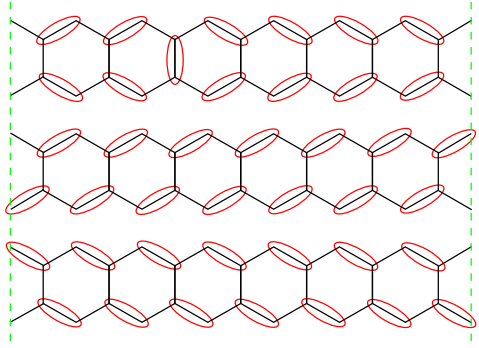}}
\caption{\label{threetypes} The three types of sections of rows: the first type has precisely one vertical matching edge, in the second type the matching edges are precisely those of type \protect\swne, while in the third type the matching edges are precisely those of type \protect\senw.}
\end{figure}

We now perform certain operations to the rows $1,3,\ldots,2n-1$. More specifically, we choose for each of these rows either the part left of the line $\ell_j$ or the part right of the line $\ell_j$ to which we apply the operations  then. The side is fixed if the two vertices of the row that lie on the line $\ell_j$ are matched to vertices that lie in the row and on the  same side of $\ell_j$; in that case we choose the other side. Otherwise the two sides are chosen with equal probability $\frac{1}{2}$. The line $\ell_j$ serves then also as a dividing line of the sections of $R_{2i -1}$ (possibly in addition to the lines that were already identified). 

For each odd row $2i-1$ and each section of type $1$ that is situated on the chosen side of that particular row, we now perform the following operation; see also Figure~\ref{type1}. If the section contains $t$ vertical edges and the unique vertical matching edge is in position $s$, counted from the left, then we transform this section into another section of type $1$, where the unique vertical matching edge is in position $t+1-s$. This surely changes the weight, however, this can be compensated by changing the weights in this section, more specifically we simply interchange weights $x_i$ with $\bar x_i$. We can change the weights accordingly in sections of type $2$ and $3$ on the chosen side, as the weight is $1$ for both choices in these sections.

\begin{figure}
\scalebox{0.5}{
\psfrag{i}{\large$i$}
\psfrag{j}{\large$\overline{i}$}
\includegraphics{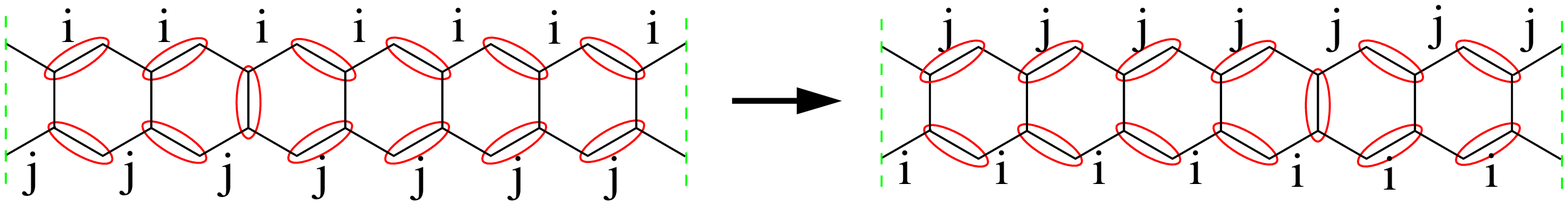}}
\caption{\label{type1} Reflecting the matching edges vertically and the edge weights horizontally leaves the total weight of $\bar x_i^{2}$ unchanged.}\end{figure}

Finally, we modify the weights on the right side of the line $\ell_j$ in such a way that the matching generating function is unchanged; see Figure~\ref{right} for an illustration. The modification is based on the following simple observation: Suppose $G$ is an edge-weighted graph and $v$ is a vertex of $V$. Let $G'$ be the edge-weighted graph obtained from $G$ by multiplying the weights of all edges incident with $v$ with $w$. Then we have the following relation between the two matching generating functions:
\begin{equation}
\m(G') = w \m(G).
\end{equation}
Now consider all vertical edges $e$ that are (fully) contained in row $R_{2i-1}$ and located right of the line $\ell_j$. Let $a$ and $b$ be the two endpoints of $e$. Both vertices have precisely two incident edges with weight $1$ (one of them is $e$), while the third edge has weight $x_i$ for one vertex and weight $\bar x_i$ for the other vertex (these edges are both of type \swne). We assume without loss of generality that there is an edge incident with $a$ that has weight $x_i$. Now we multiply the weights of each edge incident with $a$ by $\bar x_i$ and the weights of each edge incident with $b$ by $x_i$. Now $a$ has precisely one vertex incident with it that has weight $\bar x_i$, while $b$ has precisely one vertex incident with it that has weight $x_i$. These edges are both of type \senw, and we have reached an element of 
\begin{equation}
\bigcup_{\sigma \in \mathcal{I}_n} \mathcal{M}(\sigma  \ST_{2n,k,j}^{\mathbf{p}}(x_1,\bar x_1,\ldots,x_{n}, \bar x_{n})).
\end{equation}

\begin{figure}
\scalebox{0.5}{
\psfrag{i}{\large$i$}
\psfrag{j}{\large$\overline{i}$}
\includegraphics{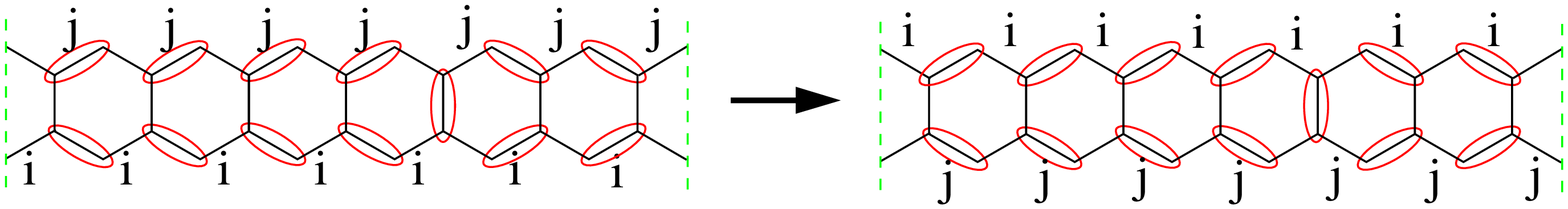}}
\caption{\label{right} Modification of edge weights on the right leaves the weights unchanged.}\end{figure}

Each step of the procedure is obviously invertible and thus this establishes a randomized bijection. The probabilities for transforming one element of the domain of the bijection into an element of the codomain is $\frac{1}{2^m}$ or $0$, where $m$ is the number of odd rows such that one of its vertices on the line $\ell_j$ is matched to the left while the other is matched to the right. It is crucial that the procedure of the matching does not change this number $m$.  
\end{proof}

\begin{rem} The randomized bijection can be transformed into a classical bijection from 
\begin{equation}
\{0,1\}^n \times \bigcup_{\sigma \in \mathcal{I}_n} \mathcal{M}(\sigma  \, \T_{2n,k}^{\mathbf{p}}(x_1,\bar x_1,\ldots,x_{n}, \bar x_{n})) 
\end{equation}
to 
\begin{equation}
\{0,1\}^n \times \bigcup_{\sigma \in \mathcal{I}_n} \mathcal{M}(\sigma \,  \ST_{2n,k,j}^{\mathbf{p}}(x_1,\bar x_1,\ldots,x_{n}, \bar x_{n})), 
\end{equation}
where the $i$-th letter of the $\{0,1\}$-sequence in $\{0,1\}^n$ encodes which side of $\ell_j$ we choose in row $2i-1$, $1 \le i \le n$, whenever there is actually a choice. 
When there is no choice in row $2i-1$, then the $i$-th letter of the word has no effect. The $\{0,1\}$-sequence itself is unchanged in the procedure. 
\end{rem}

\section{Application of Ciucu's factorization theorem and a combinatorial proof of  Theorem~\ref{thm:fact1}(\ref{item:part1})}
\label{sec_ciucu}

\subsection{Ciucu's theorem} For convenience, we recall Ciucu's factorization theorem for graphs with reflective symmetry \cite{Ciu97}. We assume that the edge-weighted graph $G$ has the following properties: 
\begin{itemize} 
\item It is planar, bipartite and connected.
\item It exhibits symmetry with respect to a vertical symmetry axis $\ell$ (including that the edge-weights are symmetric).
\item Removing the vertices on $\ell$ disconnects the graph.
\end{itemize} 
Without loss of generality, we assume that there are an even number of vertices on the symmetry axis. If not, $G$ has an odd number of vertices by symmetry and thus no perfect matching. We denote by $\n(G)$ half of the number of vertices on the symmetry axis and by $a_1,b_1,\ldots,a_{\n(G)},b_{\n(G)}$ the vertices on $\ell$ as they appear from top to bottom. We refer to the vertices in one vertex class of $G$ as the \emph{positive vertices}, while we refer to the vertices in the other vertex class as the \emph{negative vertices}.
 
For a vertex $v$ on $\ell$, we define two cutting operations: ``Cutting left of $v$'' means that we delete all 
incident edges left of $\ell$, while ``cutting right of $v$'' means that we delete all incident edges right of $\ell$.
 Now we define two subgraphs of $G$ as follows: We perform the cutting operation right of all positive $a_i$'s and negative $b_i$'s, and left of all negative $a_i$'s 
and positive $b_i$'s. Reduce the weights of the edges on $\ell$ by half and leave all other weights unchanged. We obtain two disconnected graphs, and denote by $G^{+}$ the left graph and by $G^{-}$ the right graph.
We are now able to state Ciucu's factorization theorem.
\begin{thm}\cite[Theorem~1.2]{Ciu97} With the notations introduced above, we have 
\begin{equation}
\m(G)  = 2^{\n(G)} \m(G^+) \m(G^-).
\end{equation} 
\end{thm}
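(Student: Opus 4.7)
The plan is to prove the factorization by a combinatorial argument exploiting the reflection $\rho$ across the symmetry axis $\ell$ as a weight-preserving involution on $\mathcal{M}(G)$. For any matching $M$, the multiset superposition $M \cup \rho(M)$ covers every vertex with total degree $2$ and hence decomposes into a disjoint union of cycles. These cycles come in three types: (a) doubled axis edges (axis edges of $M$, appearing with multiplicity two in the superposition); (b) $\rho$-invariant cycles, which necessarily cross $\ell$ at an even number of axis vertices; and (c) pairs of cycles that are swapped by $\rho$ and lie entirely off $\ell$. Weight-preservation of $\rho$ gives $w(M \cup \rho(M)) = w(M)^2$, which will ultimately let me split weights between $G^+$ and $G^-$.

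From this decomposition I would read off a pair $(M^+, M^-) \in \mathcal{M}(G^+) \times \mathcal{M}(G^-)$ as follows: a swapped pair of type (c) splits cleanly into its left and right halves, contributing to $M^+$ and $M^-$ respectively; a doubled axis edge of type (a) is assigned to both $M^+$ and $M^-$, and the halving of axis-edge weights exactly compensates for the duplication; and each $\rho$-invariant cycle of type (b) is cut at its axis crossings into arcs assigned to $M^+$ or $M^-$ according to the cutting rule. The cutting rule (right of positive $a_i$'s and negative $b_i$'s, left of negative $a_i$'s and positive $b_i$'s) is engineered precisely so that the resulting assignment is consistent with the bipartite structure: at each axis vertex $v$, the unique $M^{\pm}$-edge incident to $v$ lies on the side of $\ell$ dictated by the rule, guaranteeing that $M^+$ and $M^-$ are genuine perfect matchings of $G^+$ and $G^-$.

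The main obstacle will be verifying that this map $M \mapsto (M^+, M^-)$ is $2^{\n(G)}$-to-one in a weight-preserving manner. The $2^{\n(G)}$-fold ambiguity should come from the $\n(G)$ axis pairs $(a_i, b_i)$: when reconstructing $M$ from $(M^+, M^-)$, each such pair offers one bit of freedom in how a type-(b) cycle through it can be re-glued across the axis. Carrying this out rigorously requires a careful case analysis of cycle incidences at each $(a_i, b_i)$, leveraging the bipartition to propagate the correct parity along each $\rho$-invariant cycle. As a fallback, a purely linear-algebraic route is available: since $G$ is planar bipartite, $\m(G)=|\det B|$ for a signed bi-adjacency (Kasteleyn) matrix $B$, and the $\mathbb{Z}/2$-action of $\rho$ block-diagonalizes $B$ via the orthogonal change of basis $(u,\rho u)\mapsto\bigl((u+\rho u)/\sqrt{2},(u-\rho u)/\sqrt{2}\bigr)$. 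The two resulting blocks are identifiable with the Kasteleyn matrices of $G^+$ and $G^-$, with the $1/\sqrt{2}$ factors on axis vertices producing the halved axis weights and the $2^{\n(G)}$ prefactor emerging automatically from the change-of-basis Jacobian.
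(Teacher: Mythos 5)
The paper offers no proof of this statement---it is quoted directly from Ciucu's article---so there is nothing in the text to compare your argument against; I can only judge the sketch on its own terms. The superposition strategy ($M\cup\rho(M)$, decomposed into doubled edges, $\rho$-invariant cycles and swapped pairs) can be made to work, but two of your central claims are false as stated, and they are exactly the points where the content of the theorem lives. First, the map $M\mapsto(M^+,M^-)$ is \emph{not} uniformly $2^{\n(G)}$-to-one. The two endpoints of an axis edge are adjacent, hence of opposite color, and the cutting rule then sends both of them to the \emph{same} piece (in this paper's application every axis vertex ends up in $G^+$ and none in $G^-$); so an axis edge of $M$ is an edge of exactly one of $G^{\pm}$ and cannot be ``assigned to both $M^+$ and $M^-$''. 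Each axis edge used by $M$ destroys one bit of re-gluing freedom, so the fiber over $(M^+,M^-)$ has size $2^{\n(G)-\#\{\text{axis edges of }M\}}$, and it is the factor $(1/2)^{\#\{\text{axis edges}\}}$ contributed by the halved axis weights in $w(M^+)$ that restores $\sum_{\text{fiber}}w(M)=2^{\n(G)}\,w(M^+)\,w(M^-)$; a small example with one axis pair, one matching using the axis edge (fiber of size $1$) and two matchings avoiding it (fiber of size $2$), already refutes the uniform count. Second, ``an even number of axis crossings'' is too weak to yield the exponent $\n(G)$: you need that every $\rho$-invariant cycle crosses $\ell$ at \emph{exactly two} vertices (this does follow, since the left arcs and their mirror images would otherwise disconnect the cycle, but it must be argued), and then a color/parity argument showing that the left arc between the two crossings has odd length, so that one alternating class of that arc is a perfect matching of it covering both axis endpoints while the matching class of the mirror arc covers only its interior vertices. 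That parity argument is the only place the specific cutting rule is used, and it is precisely the step you defer to ``a careful case analysis''; without it neither the fact that $M^{\pm}$ are perfect matchings of $G^{\pm}$ nor the weight bookkeeping is established. (Your ``arcs assigned to $M^+$ or $M^-$'' also conflates an arc with the alternating sub-matching one must extract from it.)

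The Kasteleyn fallback is closer to a complete argument, but the prefactor $2^{\n(G)}$ does not come from a ``change-of-basis Jacobian'' (the change of basis is orthogonal, so its determinant is $\pm1$); it comes from the factors $\sqrt2$ acquired by the matrix entries coupling the $2\n(G)$ fixed axis rows and columns to the symmetrized/antisymmetrized ones, and extracting $\sqrt2$ from each such row and column multiplies the axis-to-axis entries by $2$, which is exactly why the axis-edge weights must be halved in $G^+$. One must also check that a Kasteleyn orientation of $G$ restricts to Kasteleyn orientations of $G^+$ and $G^-$ with a globally consistent sign. Both routes are completable, but as written your proposal asserts rather than proves the two facts that constitute the theorem.
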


\subsection{Proof of Theorem~\ref{thm:fact1}(\ref{item:part1})} We now apply the gathered information to our problem. Recall that we consider the following specialization of the Schur function 
$
s_{\widehat{\lambda}}(x_1,\bar x_1, \ldots, x_n,\bar x_n)
$
for 
\begin{equation}
\widehat{\lambda} = (\lambda_1+1,\lambda_2+1, \dots, \lambda_n + 1, 
 - \lambda_n,-\lambda_{n-1},\dots, - \lambda_1)+ \lambda_1.
\end{equation}
Using Theorem~\ref{schur-matching}, this is equal to the following 
matching generating function.
\begin{equation}
\label{nonsym}
\m (\T_{2n, 2 \lambda_1+1}^{\mathbf{p}}(x_1,\bar x_1,\ldots,x_n,\bar x_n))
\end{equation}
with 
\begin{equation}
\mathbf{p} = (1-\lambda_1,2-\lambda_2,\ldots,n-\lambda_n,n+2+\lambda_n,n+3+\lambda_{n-1},\ldots,2n+1+\lambda_1)+\lambda_1.
\end{equation}
Now we observe that with this particular choice of parameters, 
$\T_{2n, 2 \lambda_1+1}^{\mathbf{p}}$ is symmetric with respect to the axis $l_{\lambda_1+1}$. However, the weights are not symmetric. To remedy  this issue, we apply Lemma~\ref{line} to see that 
\eqref{nonsym} is equal to 
\begin{equation}
\label{sum}
\frac{1}{2^n} \sum_{\sigma \in {\mathcal I}_n} \m( \sigma 
\ST_{2n,2 \lambda_1+1,\lambda_1+1}^{\mathbf{p}} (x_1,\bar x_1,\ldots,x_n,\bar x_n)).
\end{equation}

Now the weighted graphs $\sigma 
\ST_{2n,2 \lambda_1+1,\lambda_1+1}^{\mathbf{p}} (x_1,\bar x_1,\ldots,x_n,\bar x_n)$ have symmetric edge weights and we may apply Ciucu's factorization theorem. For $n=2$ and $\lambda=(2,0)$, we have $\mathbf{p}=(1,4,6,9)$ and the graph $\ST_{4,5,3}^{1,4,6,9}(x_1,\x_1,x_2,\x_2)$ is displayed in Figure~\ref{ST45}.

We fix the top vertex on the symmetry axis to be a positive vertex (which is $a_1$ in the setting of the factorization theorem). The positive vertices alternate with the negative vertices on the symmetry axis.
By Ciucu's construction, we cut right of all the vertices on the symmetry axis.
It follows that $G^+$ is 
$\htp{\lambda_n+1,\lambda_{n-1}+2,\ldots,\lambda_1+n}{2n,\lambda_1}$, 
while $G^{-}$ is $\htm{\lambda_n+1,\lambda_{n-1}+2,\ldots,\lambda_1+n}{2n,\lambda_1}$. Thus we obtain 
\begin{equation}
2^n \m( \sigma \, \htp{\lambda_n+1,\lambda_{n-1}+2,\ldots,\lambda_1+n}{2n,\lambda_1}(x_1,\bar x_1,\ldots,x_n,\bar x_n)) \m( \sigma \, \htm{\lambda_n+1,\lambda_{n-1}+2,\ldots,\lambda_1+n}{2n,\lambda_1}(x_1,\bar x_1,\ldots,x_n,\bar x_n))
\end{equation}
for the summand of $\sigma \in  {\mathcal I}_n$ in \eqref{sum}. 

\begin{figure}
\scalebox{0.35}{
\psfrag{1}{\Large$1$}
\psfrag{2}{\Large$\bar 1$}
\psfrag{3}{\Large$2$}
\psfrag{4}{\Large$\bar 2$}
\includegraphics{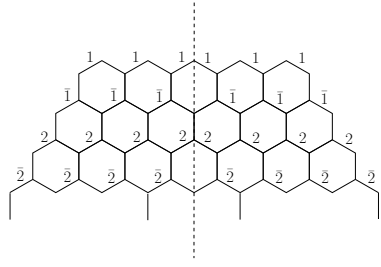}} 
\caption{\label{ST45} The graph $\ST_{4,5,3}^{1,4,6,9}(x_1,\bar x_1,x_2,\bar x_2)$.
}
\end{figure}

As for $\m(\htm{\lambda_n+1,\lambda_{n-1}+2,\ldots,\lambda_1+n}{2n,\lambda_1}(x_1,\bar x_1,\ldots,x_n))$, we know from Theorem~\ref{prop:symplectic-pattern-character} that it is equal to $sp_{\lambda}(x_1,\ldots,x_n)$. Now it is crucial that $sp_{\lambda}(x_1,\ldots,x_n)$ is invariant under replacing $x_i$ by $\x_i$ for any $i$. 
This follows easily, for instance, from the determinantal expression of 
$sp_{\lambda}(x_1,\ldots,x_n)$, but can also be seen from the combinatorial interpretations provided in this paper: Using Theorem~\ref{proc-symb}, one has to employ an operation analogous to $J_i$ as defined in Proposition~\ref{prop:OP-involution} for odd rows, or, alternatively, relying on Theorem~\ref{prop:symplectic-pattern-character}, one has to employ an operation analogous to one from 
Lemma~\ref{line}. Now, since $\m(\sigma \htm{\lambda_n+1,\lambda_{n-1}+2,\ldots,\lambda_1+n}{2n,\lambda_1}(x_1,\bar x_1,\ldots,x_n))$ is obtained from $\m( \htm{\lambda_n+1,\lambda_{n-1}+2,\ldots,\lambda_1+n}{2n,\lambda_1}(x_1,\bar x_1,\ldots,x_n, \bar x_n))$ by replacing $x_i$ by $\x_i$ for all $i$ such that $(x_i,\x_i)$ ``appears'' in $\sigma$, it follows that  
\begin{equation}
sp_{\lambda}(x_1,\ldots,x_n) = \m(\sigma \htm{\lambda_n+1,\lambda_{n-1}+2,\ldots,\lambda_1+n}{2n,\lambda_1}(x_1,\bar x_1,\ldots,x_n, \bar x_n))
\end{equation}
for all $\sigma$. We can conclude that \eqref{sum} is equal to 
\begin{equation}
sp_{\lambda}(x_1,\ldots,x_n) 
\sum_{\sigma \in {\mathcal I}_n} \m( \sigma \, \htp{\lambda_n+1,\lambda_{n-1}+2,\ldots,\lambda_1+n}{2n,\lambda_1}(x_1,\bar x_1,\ldots,x_n,\bar x_n)).
\end{equation}
The first part of Theorem~\ref{thm:fact1} now follows from Theorem~\ref{prop:even-ortho-pattern-character}.

\section{``Doubling'' the graph and a combinatorial proof of Theorem~\ref{thm:fact1}(\ref{item:part2})}
\label{sec_doubling}

As for the second part of Theorem~\ref{thm:fact1}, the largest part of the partition $\widehat{\lambda}$ is even, and therefore we need to work with a graph 
$\T_{n,k}^{\mathbf{p}}$ where there are an even number $k$ of hexagons in the top row (since $k$ is just the largest part of $\widehat{\lambda}$ by Theorem~\ref{schur-matching}). Therefore Lemma~\ref{line} cannot be applied directly to achieve symmetric edge weights, since the symmetry axis of the unweighted graph is not a line $\ell_j$ as described in the statement of Lemma~\ref{line}. Instead it will turn out to be useful to apply the operations provided in the following lemma, simultaneously at various places, with the effect that the graph is (almost) doubled.

\begin{lem} 
\label{replace}
The following replacement rule in a weighted graph leaves the matching generating function invariant, where in the replacement the degree of the black vertices does not change and the red vertices are the connecting points.  In the drawings, the label next to the edge indicates the weight and we assume $a_1 \cdot a_2 =a$, $b_1 \cdot b_2 =b$, $y_1 \cdot y_2 = y$ and 
$z_1 \cdot z_2=z$.
\begin{center}
\scalebox{0.3}{
\psfrag{a}{\Huge$a$}
\psfrag{b}{\Huge$b$}
\psfrag{a1}{\Huge$a_1$}
\psfrag{b1}{\Huge$b_1$}
\psfrag{a2}{\Huge$a_2$}
\psfrag{b2}{\Huge$b_2$}
\psfrag{x}{\Huge$y$}
\psfrag{y}{\Huge$z$}
\psfrag{t}{\Huge$t$}
\psfrag{x1}{\Huge$y_1$}
\psfrag{x2}{\Huge$y_2$}
\psfrag{y1}{\Huge$z_1$}
\psfrag{y2}{\Huge$z_2$}
\psfrag{ab}{\Huge$\frac{t}{a_1 z_1 + b_2 y_2}$}
\includegraphics{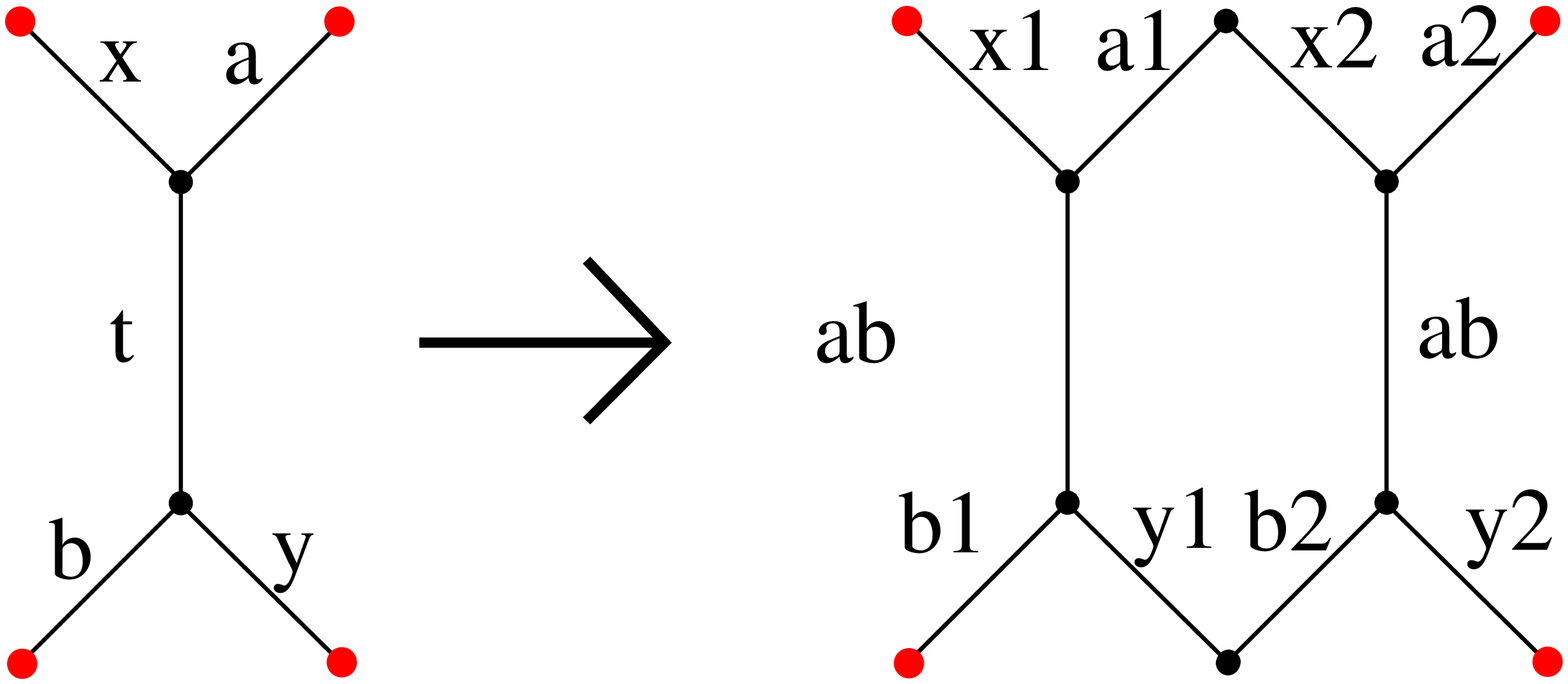}}
\end{center}
\end{lem}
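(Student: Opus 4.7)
The plan is to prove the identity by a purely local analysis on the bounded subgraph $H$ that is affected by the replacement. My first observation is that, since the black vertices in $H$ have the same degree before and after the replacement, they are entirely internal to $H$; consequently, in any perfect matching of the ambient graph, every black vertex is matched by an edge of $H$, and each red (connecting) vertex is either matched internally in $H$ or has its matching edge leaving $H$. Writing
\begin{equation*}
\m(G) = \sum_{S} \m(H \mid S) \cdot \m(G \setminus H \mid \overline{S}),
\end{equation*}
where $S$ ranges over the subsets of red vertices that are matched internally, and $\m(H\mid S)$ denotes the weighted sum of matchings of $H$ saturating the black vertices together with exactly the red vertices in $S$, it suffices to show that $\m(H\mid S)$ is preserved by the replacement for every feasible $S$.

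Next I would enumerate the (finitely many) feasible boundary patterns $S$ and, for each one, list the matchings of $H$ on both sides together with their weights. The identities $a_1 a_2 = a$, $b_1 b_2 = b$, $y_1 y_2 = y$, $z_1 z_2 = z$ are designed precisely so that whenever a matching routes through the doubled structure along a unique path, the product of the split weights collapses to the corresponding weight on the ``before'' side. This immediately takes care of all boundary patterns that do not involve the distinguished $t$-edge, since in those cases the matching on the ``after'' side is forced to traverse one of the split paths in a unique way.

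The main obstacle, and the genuine content of the lemma, is the boundary pattern in which the ``before'' configuration matches the two endpoints of the $t$-edge to each other, contributing the weight $t$. On the ``after'' side, there are in general two ways to saturate the corresponding red vertices through the doubled structure: one routes through the ``upper'' split path contributing $a_1 z_1$ (times the fixed weight on the new edge), and the other routes through the ``lower'' split path contributing $b_2 y_2$ (again times the fixed weight). Their sum is $(a_1 z_1 + b_2 y_2) \cdot \frac{t}{a_1 z_1 + b_2 y_2} = t$, which is exactly the weight on the ``before'' side. This is precisely why the new edge must carry the weight $\frac{t}{a_1 z_1 + b_2 y_2}$.

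Having verified $\m(H \mid S)$ is unchanged in every case, the invariance of the global matching generating function follows at once from the decomposition displayed above, since the exterior factor $\m(G\setminus H \mid \overline{S})$ is unaffected by the replacement. I expect the case enumeration to be short (at most a handful of feasible $S$), with the only nontrivial verification being the $t$-edge case described above; the remaining technical task is purely the bookkeeping of the split weights, which is dictated by the factorizations $a = a_1 a_2$, etc.
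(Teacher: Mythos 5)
Your proposal is correct and is essentially the paper's own argument: the paper's entire proof reads ``The proof proceeds by considering the various (not necessarily perfect) matchings of the subgraphs that cover all black vertices,'' which is exactly your decomposition of $\m(G)$ over boundary patterns $S$ on the connecting (red) vertices, reduced to checking $\m(H\mid S)$ locally. Your identification of the single nontrivial case --- the configuration using the $t$-edge corresponds on the other side to two local matchings of weights $a_1 z_1$ and $b_2 y_2$ times the new weight $\tfrac{t}{a_1 z_1 + b_2 y_2}$, summing to $t$, while all other feasible $S$ are handled uniquely via $a_1a_2=a$, $b_1b_2=b$, $y_1y_2=y$, $z_1z_2=z$ --- is precisely the computation the paper leaves to the reader.
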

\begin{proof} 
The proof proceeds by considering the various (not necessarily perfect) matchings of the subgraphs that cover all black vertices.
\end{proof}

We will apply the rules in Lemma~\ref{replace} to the graph $\T_{n,k}^{\mathbf{p}}(x_1,\ldots,x_n)$ at various places. If we say that we apply the rule to a particular vertical edge then we mean that the vertical edge on the left side of the rule corresponds to this chosen vertical edge. 
By $\DT_{n,k}^{\mathbf{p}}(x_1,\ldots,x_n)$ we denote the graph that is obtained from $\T_{n,k}^{\mathbf{p}}(x_1,\ldots,x_n)$ by applying the rules from Lemma~\ref{replace} to the vertical edges in all odd rows $1,3,5,\ldots$. In row $2i-1$, we specify the weights as follows: $a_1= x_{2i-1}^{1/2}$, $b_1= b_2 = x_{2i}^{1/2}$, $y_2=1$, $z_1=z_2=1$, $t=1$, and 
$a_2 \in \{0,x_{2i-1}^{1/2} \}$ and $y_1 \in \{0,1\}$, where we choose $y_1=0$ if we are on the left boundary and $a_2=0$ if we are on the right boundary.

The graph $\DT_{4,4}^{1,3,6,8}$ is displayed in Figure~\ref{double_graph}. Lemma~\ref{replace} implies
\begin{equation}
\label{dt}
\m(\T_{n,k}^{\mathbf{p}}(x_1,\ldots,x_n)) = 
\m(\DT_{n,k}^{\mathbf{p}}(x_1,\ldots,x_n)).
\end{equation}

\begin{figure}
\scalebox{0.35}{
\psfrag{1}{\Large$\widehat{1}$}
\psfrag{2}{\Large$\widehat{2}$}
\psfrag{3}{\Large$\widehat{3}$}
\psfrag{4}{\Large$\widehat{4}$}
\includegraphics{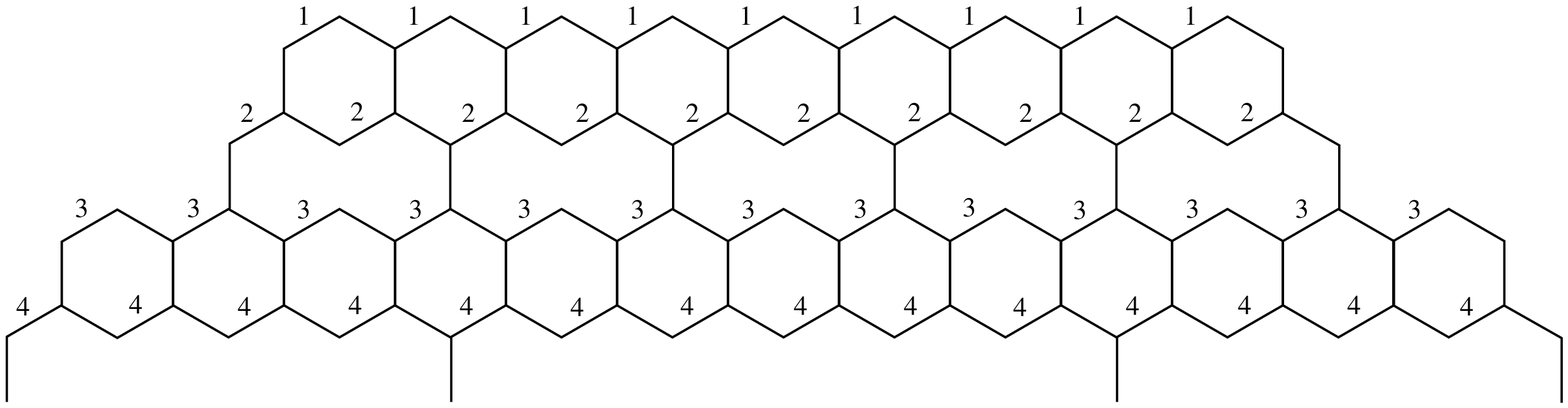}}
\caption{\label{double_graph} The graph $\DT_{4,4}^{1,3,6,8}(x_1,x_2,x_3,x_4)$. The vertical edges in row $1$ carry the weight $\left(x_1^{1/2}+x_2^{1/2}\right)^{-1}$, while the vertical edges in row $3$ carry the weight $\left(x_3^{1/2}+x_4^{1/2}\right)^{-1}$, and $\widehat{i}$ stands for $x_i^{1/2}$.}
\end{figure}

Again we consider also different edge weights of $\DT_{n,k}^{\mathbf{p}}(x_1,\ldots,x_n)$: Let $\ell_j$ denote the vertical line that contains the $j$-th vertex in the top row (counted from the left). Left of $\ell_j$, the weights are unchanged, but right of $\ell_j$ the edges of type \senw are assigned the weight $x_i^{1/2}$ in row $i$, while edges of type \swne are assigned the weight $1$; the weights of the vertical edges do not change. This edge-weighted graph is denoted by $\SDT_{n,k,j}^{\mathbf{p}}(x_1,\ldots,x_n)$. Now, a slight modification\footnote{Only the graphs differ insofar as some edges that are irrelevant for the procedure have been deleted, since they only appear in even rows. Also the edge weights of the vertical edges do not cause any difficulties as they are constant on rows.}
and 
of Lemma~\ref{line} shows that 
\begin{equation}
\label{sdt}
\m\left(\DT_{2n,k}^{\mathbf{p}}(x_1,\bar x_1,\ldots,x_n,\bar x_n) \right) =
\frac{1}{2^n} \sum_{\sigma \in {\mathcal I}_n} \m\left(\sigma \SDT_{2n,k,j}^{\mathbf{p}}(x_1, \bar x_1,\ldots, x_n, \bar x_n) \right).
\end{equation}
In the second part of Theorem~\ref{thm:fact1}, we consider the specialization of the Schur polynomial 
\begin{equation}
s_{\widehat{\lambda}}(x_1,\bar x_1,\ldots,x_n,\bar x_n),
\end{equation}
where 
\begin{equation}
\widehat{\lambda} = 
(\lambda_1,\ldots,\lambda_n,-\lambda_n,\ldots,-\lambda_1) + \lambda_1.
\end{equation}
By Theorem~\ref{schur-matching}, this is equal to the matching generating function 
\begin{equation}
\m(\T_{2n,2 \lambda_1}^{\mathbf{p}}(x_1, \bar x_1,\ldots,x_n, \bar x_n),
\end{equation}
with 
\begin{equation}
\mathbf{p} = (1- \lambda_1,2-\lambda_2,\ldots,n-\lambda_n,
n+1+\lambda_n,\ldots,2n+\lambda_1) + \lambda_1.
\end{equation}
Using \eqref{dt} and \eqref{sdt}, this is equal to 
\begin{equation}
\label{sum2}
\frac{1}{2^n} \sum_{\sigma \in {\mathcal I}_n} \m\left(\sigma \SDT_{2n,2 \lambda_1 ,2 \lambda_1+1}^{\mathbf{p}}(x_1, \bar x_1,\ldots, x_n, \bar x_n) \right).
\end{equation}
Note that $\SDT_{2n,2 \lambda_1, 2 \lambda_1 +1}^{\mathbf{p}}(x_1, \bar x_1,\ldots,x_n,\bar x_n)$ has $4 \lambda_1 +1$ hexagons in the top row and thus $\ell_{2 \lambda_1 +1}$ is its symmetry axis.

Now we apply Ciucu's factorization theorem to each summand in \eqref{sum2}. We declare the top vertex $a_1$ on the symmetry axis to be a positive vertex (in the context of Ciucu's theorem). It follows that all vertices $a_i$ on the symmetry axis are positive, while all vertices $b_i$ on the symmetry axis are negative. Thus we need to cut right of all vertices on the symmetry axis.

We first consider the left graphs $G^+$. The vertices on the former symmetry axis are all of degree $1$. We delete the incident edges as well as all edges adjacent to these edges to obtain a new graph whose matching generating function differs from the original by a multiplicative factor of 
\begin{equation}
\sigma \prod_{i=1}^{n} x_i^{1/2},
\end{equation}
which needs to be multiplied to the generating function of the reduced graph to obtain the generating function of the former graph. 
For our example, the graph is displayed in Figure~\ref{gplus}(a) for $\sigma = \id$. Next we apply Lemma~\ref{replace} in the reverse direction, i.e., we shrink hexagons to vertical edges. To be more precise, we shrink the hexagons in the odd rows that are in odd positions, if counted from the left. We obviously obtain the graph 
\begin{equation}
\sigma \, \hhtp{\lambda_n,\lambda_{n-1}+1,\dots,\lambda_1+n-1}
 {2n,\lambda_1-1}(x_1,\bar x_1,\ldots,x_n,\bar x_n)
\end{equation}
in Figure~\ref{gplus}(b) and so the matching generating function of $G^+$ is 
\begin{equation}
\left[ \sigma \prod_{i=1}^{n} x_i^{1/2} \right]
 M \left( \sigma \, \hhtp{\lambda_n,\lambda_{n-1}+1,\dots,\lambda_1+n-1}
 {2n,\lambda_1-1} (x_1, \bar x_1,\ldots, x_n, \bar x_n) \right).
 \end{equation}

\begin{figure}
\begin{tabular}{cc}
\scalebox{0.35}{
\psfrag{1}{\Large$\widehat{1}$}
\psfrag{2}{\Large$\overline{\widehat{1}}$}
\psfrag{3}{\Large$\widehat{2}$}
\psfrag{4}{\Large$\overline{\widehat{2}}$}
\includegraphics{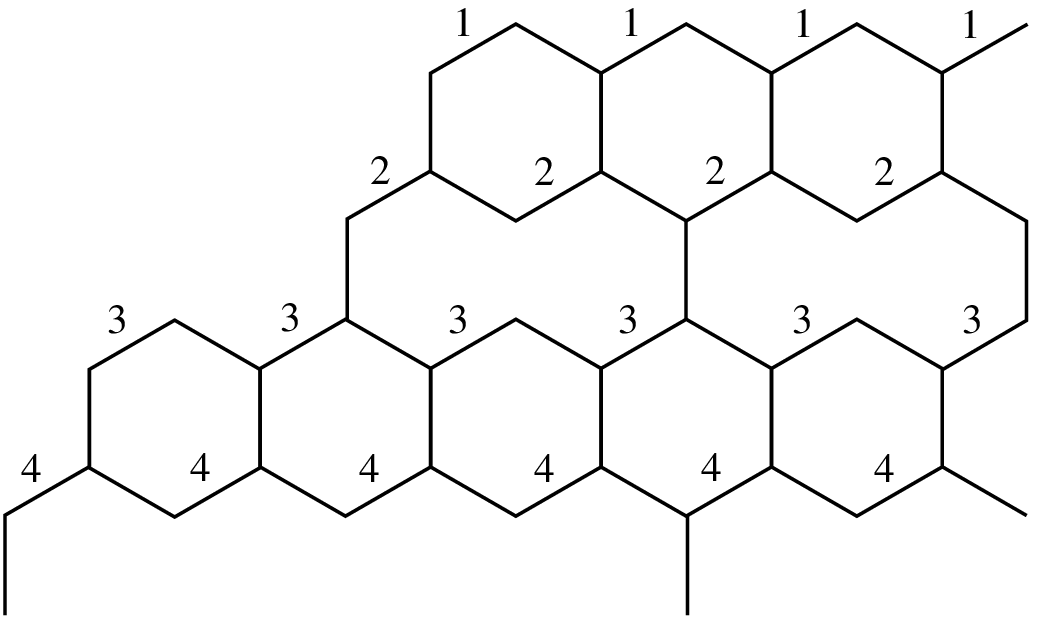}} \qquad \qquad & \qquad \qquad  
\scalebox{0.35}{
\psfrag{1}{\Large$1$}
\psfrag{2}{\Large$\bar 1$}
\psfrag{3}{\Large$2$}
\psfrag{4}{\Large$\bar 2$}
\includegraphics{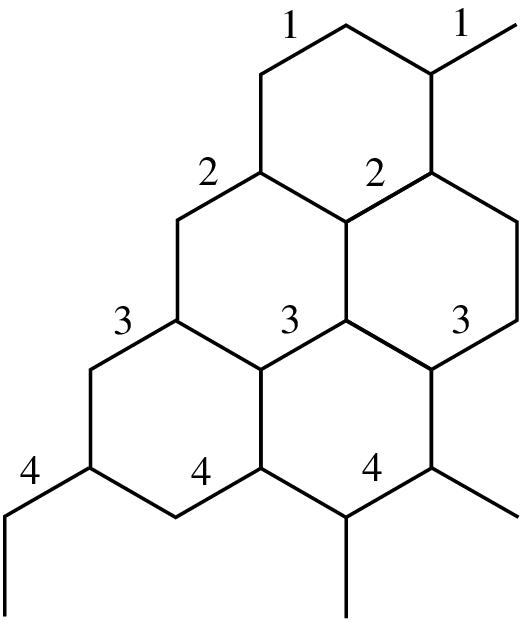}} \\
(a) & (b)
\end{tabular}
\caption{\label{gplus} (a) The graph $G^+$ of  $\SDT_{4,4,5}^{1,3,6,8}(x_1,\bar x_1,x_2, \bar x_2)$. The vertical edge-weights in odd rows are as in Figure~\ref{double_graph}. (b) The graph $G^+$ after application of Lemma~\ref{replace} in reverse direction.}
\end{figure}

We now consider the right graphs $G^-$. For our example, the graph is displayed in Figure~\ref{gminus} when $\sigma=\id$. Again we apply Lemma~\ref{replace} in reverse direction to every other hexagon in odd rows, more precisely to the hexagons in odd positions if counted from the \emph{right}. The weights of the vertical edges are then $1$ again, except for the leftmost vertical edges of the odd rows which still carry the weight $x_i^{1/2} + x_i^{-1/2}$ in row $2i-1$. 
We multiply the weights of the edges incident with the vertices along the left horizontal line in positions $2i$, $1 \le i \le n$, counted from the top with  
$x_i^{1/2} + x_i^{-1/2}$. We obtain the graph 
\begin{equation}
\sigma \, \hhtm{\lambda_n+1,\lambda_{n-1}+2,\dots,\lambda_1+n}
 {2n,\lambda_1}(x_1,\bar x_1,\ldots,x_n,\bar x_n).
\end{equation}
Using Theorem~\ref{oddorth}, the matching generating function is equal to 
$\sigma \oo_\lambda(x_1,\ldots,x_n)$ and thus independent of $\sigma$ as 
$ \oo_\lambda(x_1,\ldots,x_n)$ is invariant under replacing $x_i$ with $\bar x_i$. In total, we obtain 
\begin{multline}
\prod_{i=1}^{n} \left( x_i^{1/2} + x_i^{-1/2} \right)^{-1} \oo_\lambda(x_1,\ldots,x_n) \\ \times \sum_{\sigma \in {\mathcal I}_n} 
\left[ \sigma \prod_{i=1}^{n} x_i^{1/2} \right] 
 M \left(\sigma \, \hhtp{\lambda_n,\lambda_{n-1}+1,\dots,\lambda_1+n-1}
 {2n,\lambda_1-1} (x_1, \bar x_1,\ldots, x_n, \bar x_n) \right).
\end{multline}
Theorem~\ref{thm:fact1}(\ref{item:part2}) now follows from Theorem~\ref{prop:even-ortho-pattern-character-half}.

\begin{figure}
\begin{tabular}{cc}
\scalebox{0.35}{
\psfrag{1}{\Large$\widehat{1}$}
\psfrag{2}{\Large$\overline{\widehat{1}}$}
\psfrag{3}{\Large$\widehat{2}$}
\psfrag{4}{\Large$\overline{\widehat{2}}$}
\includegraphics{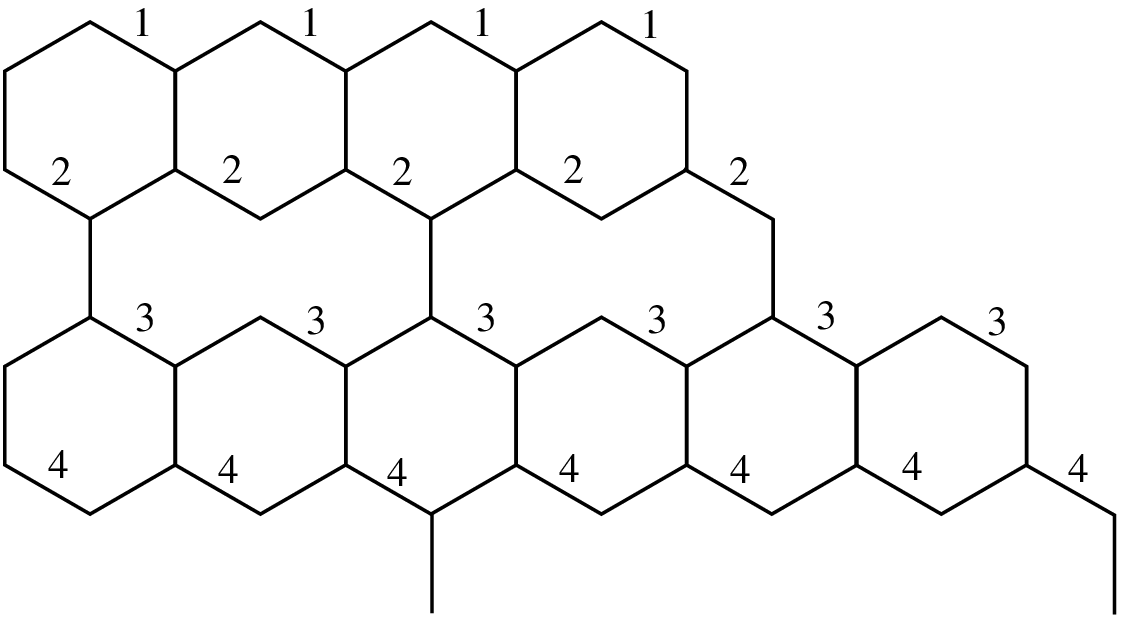}} \qquad \qquad & \qquad \qquad  
\scalebox{0.35}{
\psfrag{1}{\Large$1$}
\psfrag{2}{\Large$\bar 1$}
\psfrag{5}{\Large$I+\bar 1$}
\psfrag{3}{\Large$2$}
\psfrag{4}{\Large$\bar 2$}
\psfrag{6}{\Large$I+\bar 2$}
\includegraphics{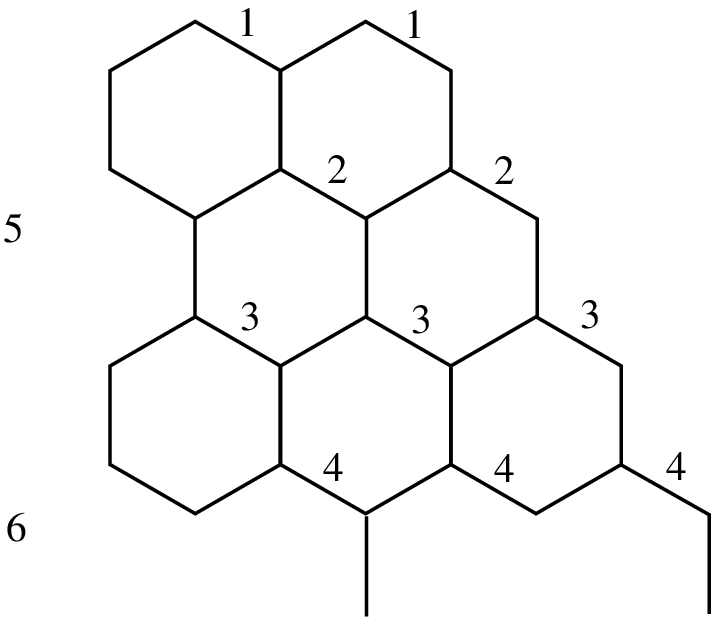}} \\
(a) & (b)
\end{tabular}
\caption{\label{gminus} (a) The graph  $G^-$ of  $\SDT_{4,4,5}^{1,3,6,8}(x_1,\bar x_1,x_2, \bar x_2)$. The vertical edge-weights in odd rows are as in Figure~\ref{double_graph}. (b) The graph $G^-$ after application of Lemma~\ref{replace} in reverse direction. Here we denote the number $1$ by $I$.}
\end{figure}

\section{The skew case: Proof of Theorem~\ref{skew_theo}}
\label{sec_skew}

Our bijective proof of Theorem~\ref{thm:fact1} actually reveals that the theorem can be generalized to skew Schur polynomials, provided that also the inner shape of the skew shape satisfies a certain symmetry property as laid down in Theorem~\ref{skew_theo}. This follows merely from the observation that edges can also stick out at the top of the trapezoidal graph $\T_{n,k}$, just as they stick out at the bottom according to the outer shape. These top edges encode the inner shape of the skew shape. The symmetry property of the inner shape has to guarantee that also the distribution of the top edges is symmetric with respect to the vertical symmetry axis of the graph so that Ciucu's factorization theorem can be applied. Phrased differently, this can also be seen as a refinement of Theorem~\ref{thm:fact1}, where we fix in a certain fixed row, say, $m$ of the graph $\T_{n,k}$ the vertical matching edges. (In the associated semistandard tableaux  this corresponds to fixing the shape of the entries that are less than or equal to $m$.) From this point of view, the skew case concerns the subgraph of $\T_{n,k}$ consisting of what is below this fixed row and adding edges sticking out at the (new) top according to the fixed matching in (the old) row $m$. 
Since our ``procedures'' (in a sense) do not mix between different rows of $\T_{n,k}$ (especially those used in the proof of Lemma~\ref{line}), the proofs of the straight cases generalize easily to the skew cases. The only additional effort is in finding the correct skew generalizations of the objects such as patterns and honeycomb graphs, which is also straightforward as we only need to chop off the appropriate number of top rows. This considerably increases the notational complexity because we also need to involve the positions of the extra edges sticking out at the top of the honeycomb graphs. A detailed proof of Theorem~\ref{skew_theo} will only obscure the main ideas. We therefore chose to give a detailed proof in the straight case and highlight a few major steps of the skew case in this section.

\subsection{Graphical model of skew Schur polynomials} 
Theorem~\ref{schur-matching} can be extended to \emph{skew Schur polynomials}. 
For this purpose, we only need to generalize $\T_{n,k}^{\mathbf{p}}(x_1,\ldots,x_n)$ so that we allow also additional vertical edges attached to the vertices in the topmost row. Let $n,k$ be a positive integer and $m$ be a non-negative integers with $m<n$ and $m \le k$, and consider 
$\T_{n-m,k}$. Adding $n$ vertical edges to a selection of the $n-m+k$ vertices in the bottommost row, while adding $m$ vertical edges to a selection of the $k$ vertices in the topmost row results in a bipartite graph that has the same number of vertices in each vertex class, and may as such possess a perfect matching. Let 
$\mathbf{p}=(p_1,\ldots,p_n)$, $1 \le p_1 < p_2 < \ldots < p_n \le n-m+k$, be these positions at the bottom, and, $\mathbf{q}=(q_1,\ldots,q_m)$, 
$1 \le q_1 < q_2 < \ldots < q_{m} \le k$, be these positions at the top, then 
$\mystrut^{\mathbf{q}}\T_{n-m,k}^{\mathbf{p}}$ denotes the corresponding (unweighted) graph, while 
$\mystrut^{\mathbf{q}}\T_{n-m,k}^{\mathbf{p}}(x_1,\ldots,x_{n-m})$ denotes the corresponding weighted graph. An example is given in Figure~\ref{skew}. Recall that for two partitions $\mu, \lambda$ such that the Young diagram of $\mu$ is contained in the Young diagram of $\lambda$, the skew Schur polynomial $s_{\lambda / \mu}(x_1,\ldots,x_{n-m})$ is the generating function of semistandard fillings of the Young diagram of shape $\lambda / \mu$ with respect to the weight in \eqref{tableau-weight} (replacing $n$ by $n-m$ there). The generalization of Theorem~\ref{schur-matching} is as follows.

\begin{figure}
\scalebox{0.35}{
\psfrag{1}{\Large$1$}
\psfrag{2}{\Large$2$}
\psfrag{3}{\Large$3$}
\psfrag{4}{\Large$4$}
\psfrag{5}{\Large$5$}
\psfrag{6}{\Large$6$}
\includegraphics{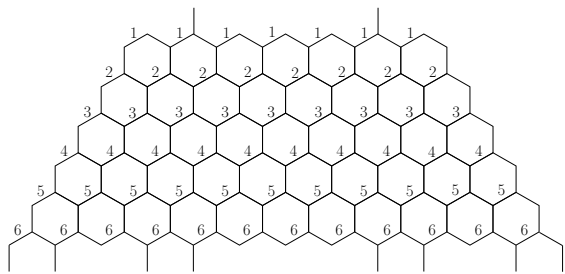}}
\caption{\label{skew} The graph $\mystrut^{2,6} \T_{6,7}^{1,2,4,5,9,10,12,13}(x_1,x_2,x_3,x_4,x_5,x_6)$}
\end{figure}

\begin{thm} 
\label{schur-matching-general}
Let $m,n$ be non-negative integers with $m<n$. Let $\lambda=(\lambda_1,\ldots,\lambda_n)$ and $\mu=(\mu_1,\ldots,\mu_{m})$ be partitions such that $\mu$ is contained in $\lambda$. Then 
\begin{equation}
s_{\lambda / \mu}(x_1,\ldots,x_{n-m}) = \m(^{\mu_{m}+1,\mu_{m-1}+2,\ldots,\mu_{1}+{m}}\T_{n-m,\lambda_1+m}^{\lambda_n+1,\lambda_{n-1}+2,\ldots,\lambda_{1}+n}(x_1,\ldots,x_{n-m})).
\end{equation}
\end{thm}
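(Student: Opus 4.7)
The plan is to follow the straight case (Theorem~\ref{schur-matching}) essentially verbatim, with the only novelty being that the semistandard filling encodes a sequence of partitions starting at~$\mu$ instead of at the empty partition, and that this ``missing bottom'' of the usual Gelfand--Tsetlin pattern is precisely captured by the $m$ extra edges attached to the top of the trapezoidal honeycomb graph.

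First, I would recall the classical bijection between a skew semistandard tableau $T$ of shape $\lambda/\mu$ with entries in $\{1,2,\ldots,n-m\}$ and a \emph{skew Gelfand--Tsetlin pattern}, i.e.\ a triangular array whose rows are indexed by $m, m+1,\ldots, n$, such that row $i$ has $i$ weakly increasing entries, interlacing holds between consecutive rows, row $m$ equals $(\mu_m,\mu_{m-1},\ldots,\mu_1)$, and row $n$ equals $(\lambda_n,\lambda_{n-1},\ldots,\lambda_1)$. As in the straight case, row $i$ of the pattern is the shape of the sub-diagram of entries~$\le i-m$ of~$T$ (together with~$\mu$), and the weight of $T$ equals $\prod_{i=1}^{n-m} x_i^{r_{m+i}-r_{m+i-1}}$, where $r_j$ is the sum of row~$j$ of the pattern.

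Next, I would shift diagonals exactly as in the straight case: replace row~$j$ by the sequence obtained by adding the numbers $1,2,\ldots,j$ along the $\nearrow$-diagonals (equivalently, adding~$s$ to the $s$-th entry from the left). After this shift the top row becomes $(\mu_m+1, \mu_{m-1}+2, \ldots, \mu_1+m) = \mathbf{q}$, the bottom row becomes $(\lambda_n+1,\lambda_{n-1}+2,\ldots,\lambda_1+n) = \mathbf{p}$, strict increase holds along $\nearrow$-diagonals and weak increase along $\searrow$-diagonals, so the shifted array records the positions, counted left to right, of vertical matching edges in the $(n-m+1)$ zig-zag rows of the graph $\mystrut^{\mathbf{q}}\T_{n-m,\lambda_1+m}^{\mathbf{p}}$. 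Since each matching of a honeycomb strip is forced once its vertical edges are chosen, this gives a bijection between the skew patterns with prescribed top row $\mu$ and bottom row $\lambda$ and the matchings of the graph in question; the constraints $\mu \subseteq \lambda$, $m<n$ and $\mu_1 \le \lambda_1$ guarantee that $\mathbf{q}$ lies in the admissible range $[1,\lambda_1+m]$ and $\mathbf{p}$ in $[1,n+\lambda_1]$.

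Finally, I would verify that the bijection is weight-preserving by the same argument used inside the proof of Lemma~\ref{prop:even-ortho-nonneg-pattern-character}: the number of type-\swne\ edges in zig-zag row~$i$ of the matching equals $r_{m+i}-r_{m+i-1}$, where $r_j$ is the sum of the shifted row~$j$ minus the trivial shift contribution (which telescopes), so the matching weight matches the tableau weight term-by-term in each variable $x_i$. There is no genuine obstacle here; the only thing one has to be careful about is that when~$\mu$ is not contained in~$\lambda$ the set of skew patterns is empty and the matching generating function vanishes, matching the convention $s_{\lambda/\mu}=0$ in that case. The resulting identity is precisely the claim of Theorem~\ref{schur-matching-general}.
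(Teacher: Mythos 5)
Your proposal is correct and is essentially the argument the paper intends: the paper states Theorem~\ref{schur-matching-general} without proof as a direct extension of Theorem~\ref{schur-matching}, and your route (skew tableau $\leftrightarrow$ trapezoidal Gelfand--Tsetlin pattern with fixed top row $\mu$ and bottom row $\lambda$, diagonal shift, reading off vertical matching edges with the top attached edges encoding the inner shape, weight check as in the proof of Lemma~\ref{prop:even-ortho-nonneg-pattern-character}) is exactly that extension. The only quibble is cosmetic: the graph $\mystrut^{\mathbf{q}}\T_{n-m,\lambda_1+m}^{\mathbf{p}}$ has $n-m$ zig-zag rows, with the $(n-m+1)$ rows of the shifted pattern recording the top attached edges plus the vertical matching edges in those $n-m$ rows.
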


Note that there is the following ambiguity: Since we can add any number of zeros to $\mu$ and $\lambda$, there is of course an infinite family of graphs that can be used for a particular skew Schur polynomial. However, adding a zero to both $\mu$ and $\lambda$ means that we add a $\nearrow$-diagonal of hexagons left of the graph and we have an edge sticking out at the first position of the top row as well as at the first position of the bottom row. By forcing, the leftmost vertical edge of each row is a matching edge, thus the leftmost $\nearrow$-diagonal can be deleted again without changing the matching generating function.

\subsection{Combinatorial definitions of skew characters of other classical groups}

Here we first define skew versions of the characters of symplectic groups and of the orthogonal groups as they are suggested by our combinatorial proof, thereby clarifying the right-hand sides of the identities in Theorem~\ref{skew_theo}.  More specifically, we need to generalize the patterns provided in Definitions~\ref{def:nsympl}, \ref{defi_evenpatterns}, \ref{defi_splitpatterns} and their weights. The skew characters are then the generating functions of these patterns with respect to the weights. The general principle is very simple as laid down in the introductory paragraph of this section: In all cases, a pattern associated with the parameters $m,n$ ($m,n$ are as usual the number of parts of $\mu, \lambda$, respectively) is obtained by deleting at the top of a pattern of ``order'' $n$ a pattern of order $m$, except for the bottom row of the pattern of order $m$ which remains and corresponds to the inner shape. The generalized patterns are then obviously of trapezoidal shape.

We then also give references to appearances of these skew characters in representation theory, where they show up when restricting the straight characters to a certain subgroup and combinatorial interpretations 
are given in terms of various skew tableaux by Koike and Terada \cite{KoiTer90}. Using standard arguments, these tableaux representations can be transformed into pattern representations, thereby establishing the connection to the patterns appearing in our proof of Theorem~\ref{skew_theo}. 
To unify the relationship between our graphical models, patterns and tableaux, we will use notations that will differ from theirs and we will point to the difference in each case separately.
It is worth noting that Hamel~\cite{Ham97} has given determinantal formulas for skew symplectic and skew odd orthogonal characters.

\subsubsection{Skew symplectic characters} In this section, we assume $m < n$ to be non-negative integers, and $\mu, \lambda$ to be partitions with $m$ and $n$ parts, respectively, where we allow (as usual) also zero parts. 
The proof of Theorem~\ref{skew_theo} suggests the following definition, which is a generalization of Definition~\ref{def:nsympl}.

\begin{defi}[Trapezoidal symplectic patterns]
\label{trapez_sympl} 
Let $m,n,\mu,\lambda$ be as above.
\begin{enumerate}
\item A $(2n)/(2m)$-symplectic pattern has the shape of a $(2n)$-symplectic pattern with a 
$(2m)$-symplectic pattern deleted from the top, except for the bottom row of the $(2m)$-symplectic pattern that remains, such that the entries are non-negative integers and weakly increase along $\nearrow$-diagonals and $\searrow$-diagonals.
\item The rows are indexed from $0$ to $2n-2m$, starting with the top row, and $r_i$ is the sum of entries in row $i$. The weight of the pattern $P$ is then 
\begin{equation}
\ws P = \prod_{i=1}^{n-m} x_i^{r_{2i}-2r_{2i-1}+r_{2i-2}}.
\end{equation}
\item Denote the set of all $(2n)/(2m)$-symplectic patterns with top row $\mu$ and bottom row $\lambda$, both written in increasing order, as $\SP_{\lambda/\mu}$. 
\end{enumerate}
\end{defi} 

Note that our definitions are consistent with the straight case, which is obtained by setting $m=0$. In this case, the $0$-indexed row is empty, which is consistent with setting $r_0=0$ in Definition~\ref{def:nsympl}.
 The skew symplectic character $\sp_{\lambda/\mu}(x_1,\ldots,x_{n-m})$ appearing in Theorem~\ref{skew_theo} is the generating function 
\begin{equation}
\sum_{P \in \SP_{\lambda/\mu}} \ws P, 
\end{equation}
as can be seen when generalizing our combinatorial proof of Theorem~\ref{thm:fact1} to skew shapes. By Proposition~\ref{prop:skew-symp-patterns}, this combinatorially motivated definition coincides with the representation theoretic definition.

\begin{rem} 
\label{containment}
Since only now we have clarified the right-hand side of \eqref{toshow_skew}, let us point out the following subtlety: letting $m=2, n=3, \mu=(1,1), \lambda=(3,2,2)$, the shape $\widehat{\mu}$ is not contained in $\widehat{\lambda}$, so the left-hand side of  \eqref{toshow_skew} is zero. However, this is also true for the right-hand side, because 
$\SP_{\lambda/\mu}$ is empty in this case.
\end{rem}

In representation theory, the skew symplectic character $\sp_{\lambda/\mu}(x_1,\ldots,x_{n-m})$ appears when restricting the (straight) symplectic character to a certain subgroup.  In \cite[Proposition 4.1]{KoiTer90}, Koike and Terada provide a combinatorial interpretation of this skew symplectic character in terms of the generating function of the following skew tableaux.
The barred and unbarred symbols are interchanged in our convention.

\begin{defi}
Let $m,n,\mu,\lambda$ be as above and assume $\mu \subseteq \lambda$.
A {\em skew symplectic semistandard tableau $T$ of $\lambda / \mu$ with entries}
\begin{equation}
\overline{1} < 1 <  \overline{2} < 2 <  \cdots <  \overline{n-m} < n-m
\end{equation}
is a filling of the shape with these entries satisfying the following conditions:
\begin{itemize}
\item the entries increase weakly along rows,
\item the entries increase strictly along columns,
\item the entries in the $(i+m)$-th row must be greater than or equal to $\overline{i}$.
\end{itemize}
For $\alpha$ being any of the entries above, let $n_\alpha(T)$ be the number of occurrences of $\alpha$ in $T$. Then the weight of such a tableau is given by
\begin{equation}
\ws T = \prod_{i=1}^{n-m} x_i^{n_{i}(T) - n_{\overline{i}}(T)}.
\end{equation}
\end{defi}

Let $\SPT_{\lambda/\mu}$ be the set of skew symplectic semistandard tableau of 
shape $\lambda / \mu$ filled with entries $\overline{1},1,\ldots,\overline{n-m},n-m$.
For example, choosing $n=5, m=3$, $\mu=(3,2,1)$ and $\lambda=(5,4,4,2,2)$, then 
\begin{equation}
\label{eg:spt}
\begin{ytableau}
\none & \none & \none &\overline{1} & 1 \\
\none & \none & 1 & 1 \\
\none & 1 & 2 & 2 \\
\overline{1} & \overline{2} \\
\overline{2} & 2
\end{ytableau}
\end{equation}
is a skew symplectic tableau of shape $\lambda / \mu$ with weight $x_1^2 x_2$.

\begin{thm}[{\cite[Proposition 4.1]{KoiTer90}}]
\label{thm:koi-ter-skew-symp}
Let $m,n,\mu,\lambda$ be as above and assume $\mu \subseteq \lambda$.
The skew symplectic character of shape $\lambda / \mu$ is given by
\begin{equation}
\sp_{\lambda / \mu}(x_1,\dots,x_{n-m}) = \sum_{T \in \SPT_{\lambda/\mu}} \ws T.
\end{equation}
\end{thm}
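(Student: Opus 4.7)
The plan is to exhibit a weight-preserving bijection between $\SPT_{\lambda/\mu}$ and $\SP_{\lambda/\mu}$, the set of $(2n)/(2m)$-symplectic patterns from Definition~\ref{trapez_sympl}. Since in our setup $\sp_{\lambda/\mu}(x_1,\ldots,x_{n-m})$ is the generating function of such patterns with respect to the weight $\ws{\cdot}$, the theorem will follow. The construction generalizes the classical bijection between semistandard tableaux and Gelfand-Tsetlin patterns in the presence of both an inner shape $\mu$ and the King-style symplectic constraint.

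Given $T \in \SPT_{\lambda/\mu}$, I would build the pattern $P$ row by row. For each letter $\alpha \in \{\overline{1}, 1, \overline{2}, 2, \ldots, \overline{n-m}, n-m\}$, let $\operatorname{sh}_{\leq \alpha}(T)$ denote the Young diagram consisting of $\mu$ together with all cells of $T$ labeled by letters not exceeding $\alpha$ in the order $\overline{1}<1<\overline{2}<2<\cdots$. I set row $0$ of $P$ to be $\mu$, row $2k-1$ to be $\operatorname{sh}_{\leq \overline{k}}(T)$ and row $2k$ to be $\operatorname{sh}_{\leq k}(T)$, each written in increasing order and padded with zeros on the left so as to have the prescribed length $m+k$. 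I would then verify the following properties: (a) the $\nearrow$- and $\searrow$-monotonicity follows from the containment $\operatorname{sh}_{\leq \alpha}(T) \subseteq \operatorname{sh}_{\leq \beta}(T)$ whenever $\alpha \leq \beta$, combined with the column-strictness of $T$ (which forces at most one new cell per column between successive shapes); (b) all entries are non-negative integers, since they are row-lengths of Young diagrams; (c) row $0$ equals $\mu$ and row $2n-2m$ equals $\lambda$ by construction.

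For the weight: by a telescoping argument, $r_{2k-1}-r_{2k-2} = |\operatorname{sh}_{\leq \overline{k}}(T)| - |\operatorname{sh}_{\leq k-1}(T)| = n_{\overline{k}}(T)$ and similarly $r_{2k}-r_{2k-1} = n_k(T)$. Hence the exponent of $x_k$ in $\ws{P}$ is $(r_{2k}-r_{2k-1})-(r_{2k-1}-r_{2k-2}) = n_k(T)-n_{\overline{k}}(T)$, which matches the exponent of $x_k$ in $\ws{T}$. The inverse map simply reads off the cells added between consecutive shapes, labeling them with the corresponding letter.

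The main obstacle is verifying compatibility of the King-type third condition on tableaux (``entries in the $(i+m)$-th row are $\geq \overline{i}$'') with the shape of the pattern. This condition says exactly that $\overline{i}$ does not occur above row $i+m$, so $\operatorname{sh}_{\leq \overline{i}}(T)$ has at most $m+i$ nonzero parts, which is precisely what is needed for row $2i-1$ of $P$ to fit into its prescribed length $m+i$ with non-negative entries. Conversely, given a pattern in $\SP_{\lambda/\mu}$, the non-negativity of every entry (together with the length constraints on the rows) forces the recovered tableau to satisfy the King condition. Once this correspondence is pinned down cleanly, the remaining verifications are routine and the bijection is established.
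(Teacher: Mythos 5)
Your tableau--pattern bijection is correct, and in fact it is essentially the same map that the paper uses: in the proof of Proposition~\ref{prop:skew-symp-patterns} the authors record, for each letter, the shape of the entries up to that letter and arrange these shapes (in increasing order) as the rows of the trapezoidal pattern, with the same telescoping weight computation and the same use of the King condition to control the number of nonzero parts. The problem is not the bijection but what you feed into it. The paper does not prove Theorem~\ref{thm:koi-ter-skew-symp} at all: it is imported verbatim from Koike and Terada, and its content is representation-theoretic --- it identifies the generating function of $\SPT_{\lambda/\mu}$ with the character $\sp_{\lambda/\mu}$ obtained by restricting the irreducible symplectic character to a certain subgroup. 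Your argument instead starts from the premise that ``in our setup $\sp_{\lambda/\mu}(x_1,\ldots,x_{n-m})$ is the generating function of the patterns $\SP_{\lambda/\mu}$.'' In the paper that premise is precisely Proposition~\ref{prop:skew-symp-patterns}, and it is \emph{deduced from} Theorem~\ref{thm:koi-ter-skew-symp} via the very bijection you construct. So relative to the paper's logical structure your proof is circular: you derive ``tableaux $=$ character'' from ``patterns $=$ character,'' while the paper derives ``patterns $=$ character'' from ``tableaux $=$ character.''

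Concretely, what your argument actually establishes is the weight-preserving equivalence of the two combinatorial models,
\begin{equation}
\sum_{T \in \SPT_{\lambda/\mu}} \ws T \;=\; \sum_{P \in \SP_{\lambda/\mu}} \ws P,
\end{equation}
which is valuable but is not Theorem~\ref{thm:koi-ter-skew-symp}. To prove the theorem as stated you would need the representation-theoretic input: either reproduce Koike and Terada's branching argument, or show directly that the pattern (or tableau) generating function satisfies the defining branching rule for the restriction of $sp_\lambda$ that gives rise to $\sp_{\lambda/\mu}$. Neither step appears in your proposal, and the missing step is exactly the hard part that the paper sidesteps by citation.
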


The following result generalizes Theorem~\ref{proc-symb} and should be well-known although we could not find an explicit reference.
\begin{prop}
\label{prop:skew-symp-patterns}
Let $m,n,\mu,\lambda$ be as above. 
\begin{equation}
\sp_{\lambda/\mu}(x_1,\dots,x_{n-m}) = \sum_{P \in \SP_{\lambda/\mu}} \ws P.
\end{equation}
\end{prop}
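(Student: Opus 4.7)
The plan is to reduce Proposition~\ref{prop:skew-symp-patterns} to Theorem~\ref{thm:koi-ter-skew-symp} by constructing an explicit weight-preserving bijection $\Phi$ between the set $\SPT_{\lambda/\mu}$ of skew symplectic tableaux and the set $\SP_{\lambda/\mu}$ of skew symplectic patterns. This is a direct extension of the classical bijection between semistandard Young tableaux and Gelfand--Tsetlin patterns, as well as its known straight-shape symplectic analogue underlying Theorem~\ref{proc-symb}.

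Given a tableau $T \in \SPT_{\lambda/\mu}$, I would define, for each $k \in \{1,\ldots,n-m\}$, the partition $\lambda^{(\bar k)}$ (resp.\ $\lambda^{(k)}$) as the shape of those cells of $T$ that lie in $\mu$ or contain entries $\leq \bar k$ (resp.\ $\leq k$). Setting $\lambda^{(0)} = \mu$ and $\lambda^{(n-m)} = \lambda$, the pattern $\Phi(T) = P$ is obtained by placing $\mu$ in row $0$, $\lambda^{(\bar k)}$ in row $2k-1$, and $\lambda^{(k)}$ in row $2k$, each written in weakly increasing order and padded with zeros to match the prescribed row length. The row-lengths of $P$ agree with those in Definition~\ref{trapez_sympl} because the additional constraint ``row $i+m$ of $T$ has entries $\geq \bar i$'' forces $\lambda^{(i-1)}$ to have at most $m+i-1$ parts, exactly the length of row $2i-2$; similarly, $\lambda^{(\bar i)}$ and $\lambda^{(i)}$ have at most $m+i$ parts.

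The verification that $P$ is a valid symplectic pattern (weakly increasing along $\nearrow$- and $\searrow$-diagonals, nonnegative integer entries) reduces to the observation that the chain
\begin{equation}
\mu = \lambda^{(0)} \subseteq \lambda^{(\bar 1)} \subseteq \lambda^{(1)} \subseteq \cdots \subseteq \lambda^{(\overline{n-m})} \subseteq \lambda^{(n-m)} = \lambda
\end{equation}
consists of horizontal strips, which is equivalent to the column-strict, row-weak conditions in the definition of a semistandard symplectic tableau. Conversely, any pattern $P \in \SP_{\lambda/\mu}$ yields such a chain by reading off its rows as partitions, and the horizontal-strip property translates back into a legitimate semistandard filling; the row-length constraints of $P$ ensure that the resulting tableau satisfies the symplectic condition ``entries in row $i+m$ are $\geq \bar i$''. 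Hence $\Phi$ is a bijection.

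Weight-preservation follows from a short telescoping calculation: with $r_j$ as in Definition~\ref{trapez_sympl}, one has $r_{2i-1} - r_{2i-2} = |\lambda^{(\bar i)}| - |\lambda^{(i-1)}| = n_{\bar i}(T)$ and $r_{2i} - r_{2i-1} = |\lambda^{(i)}| - |\lambda^{(\bar i)}| = n_i(T)$, so that
\begin{equation}
\ws P = \prod_{i=1}^{n-m} x_i^{r_{2i} - 2r_{2i-1} + r_{2i-2}} = \prod_{i=1}^{n-m} x_i^{n_i(T) - n_{\bar i}(T)} = \ws T.
\end{equation}
Combining $\sum_{P \in \SP_{\lambda/\mu}} \ws P = \sum_{T \in \SPT_{\lambda/\mu}} \ws T$ with Theorem~\ref{thm:koi-ter-skew-symp} then yields the desired identity. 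I anticipate the only delicate point to be the translation of the ``row $i+m$ entries $\geq \bar i$'' condition into the row-length constraint of the skew symplectic pattern; once this dictionary is set up, the remaining steps are entirely routine.
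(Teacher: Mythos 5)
Your proposal is correct and follows essentially the same route as the paper: both reduce to Koike--Terada's tableau interpretation (Theorem~\ref{thm:koi-ter-skew-symp}) and construct the standard bijection sending a skew symplectic tableau to the chain of partitions recording the shapes occupied by entries $\le \bar k$ and $\le k$, arranged as the rows of the trapezoidal pattern, with the same telescoping weight computation. Your write-up is somewhat more detailed than the paper's sketch, in particular in making explicit how the condition ``entries in row $i+m$ are $\ge \bar i$'' matches the row-length constraints of the trapezoidal pattern, which the paper leaves implicit.
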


\begin{proof}
Using Theorem~\ref{thm:koi-ter-skew-symp}, it suffices to find a a weight preserving bijection between 
skew symplectic tableaux and trapezoidal symplectic patterns. The strategy of proof is standard, 
using the general principle used to transform a semistandard tableau into a Gelfand-Tsetlin pattern as explained after Proposition~\ref{schur-matching} with appropriate modifications as follows. 
Recall that the usual semistandard tableau of shape $\lambda/\mu$ is in natural bijection with a sequence of
partitions
\begin{equation}
\mu = \lambda^0 \subseteq \lambda^1 \subseteq \cdots \subseteq \lambda^n = \lambda,
\end{equation}
such that $\lambda^i/\lambda^{i-1}$ is a horizontal strip for each $1 \leq i \leq n$. The entries in the shape 
$\lambda^i/\lambda^{i-1}$ are precisely those filled by $i$ in the tableau.
For skew sympectic semistandard tableau, the only difference is that the entries in the shape $\lambda^i/\lambda^{i-1}$
are filled by $\overline{(i+1)/2}$ if $i$ is odd and $i/2$ if $i$ is even. We now arrange each $\lambda^i$ in increasing order
to form the rows of the trapezoidal symplectic pattern. For the example of the skew symplectic pattern in
\eqref{eg:spt}, we obtain the pattern 
\begin{equation}
\begin{array}{cccccccccc}
& 1 && 2 && 3 &&&& \\
1 && 1 && 2 && 4 &&& \\
& 1 && 2 && 4 && 5 && \\
1 && 2 && 2 && 4 && 5 &  \\
& 2 && 2 && 4 && 4 && 5
\end{array}.
\end{equation}
The weights also match, completing the proof.
\end{proof}

\subsubsection{Skew even orthogonal characters}
In this section, we assume $m < n$ to be non-negative integers, and $\mu, \lambda$ to be partitions or half-integer partitions with $m$ and $n$ parts, respectively. 
The proof of Theorem~\ref{skew_theo} suggests the following definition, which is a generalization of Definition~\ref{defi_evenpatterns}.

\begin{defi}[Trapezoidal orthogonal patterns]
\label{trapez_evenortho}
Let $m,n,\mu,\lambda$ be as above.
\begin{enumerate}
\item 
A $(2n-1)/(2m-1)$-orthogonal pattern has the shape of a $(2n-1)$-orthogonal pattern with a 
$(2m-1)$-orthogonal pattern deleted from the top, except for the bottom row of the $(2m-1)$-orthogonal pattern that remains, such 
that the following conditions are satisfied:
\begin{itemize}
\item the entries are either all integers or all half-integers,
\item all entries except for the odd starters are non-negative,
\item the absolute values of the entries are weakly increasing along $\nearrow$-diagonals and $\searrow$-diagonals.
\end{itemize}
\item The rows are indexed from $-1$ to $2n-2m-1$, starting at the top, and $r_i^+$ is the sum of the absolute values of the entries in row $i$.
The weight of the pattern $P=(P_{i,j})$ is then 
\begin{equation}
\woe P = \prod_{i=1}^{n-m} x_i^{\sgn(P_{2i-1,1}) \sgn(P_{2i-3,1}) (r_{2i-1}^+ - 2r_{2i-2}^+ + r_{2i-3}^+)}.
\end{equation}
\item Denote the set of $(2n-1)/(2m-1)$-orthogonal patterns with top row $\mu$ or $\mu^-$ and bottom row $\lambda$ or $\lambda^-$, both written in increasing order, as $\OP_{\lambda/\mu}$. 
\end{enumerate}
\end{defi}

The even orthogonal character $\oe_{\lambda/\mu}(x_1,\ldots,x_{n-m})$ appearing in Theorem~\ref{skew_theo} is the the generating function 
\begin{equation}
\sum_{P \in \OP_{\lambda/\mu}} \woe P.
\end{equation}
By Proposition~\ref{prop:skew-even-orthog-patterns}, the combinatorially motivated definition coincides with the representation theoretic definition.

\begin{rem} 
Continuing the theme of Remark~\ref{containment}, observe that for $m=2, n=3, \mu=(1,1), \lambda=(3,2,2)$, the right-hand side of  \eqref{toshow_skew} is also zero because $\OP_{(\lambda+1)/(\mu+1)}$ is empty in this case.
\end{rem}

Assuming $\mu, \lambda$ to be integer partitions, Koike and Terada \cite[Proposition 4.3]{KoiTer90} show that the representation theoretical skew even orthogonal characters $\oe_{\lambda/\mu}(x_1,\ldots,x_{n-m})$, which appear when restricting the (straight) even orthogonal characters to a certain subgroup, have a combinatorial interpretation in terms of the generating function of the following skew tableaux.
Their symbols \musSharp{}$_i$, \musFlat{}$_i$, $i, \overline{i}$ correspond to our symbols 
$\hat{i}, \check{i}, \overline{i}, i$ respectively. 

\begin{defi}
\label{def:skew-oe-tableaux}
Let $m,n,\lambda, \mu$ be as above such that $\mu, \lambda$ are integer partitions and $\mu \subseteq \lambda$.
A \emph{skew even orthogonal semistandard tableau $T$ of shape $\lambda / \mu$} with entries
\begin{equation}
\hat{1} < \check{1} < \overline{1} < 1 < \hat{2} < \check{2} < \overline{2} < 2 < \cdots < \widehat{n-m} < \widecheck{n-m} < \overline{n-m} < n-m,
\end{equation}
is a filling of the shape with these entries satisfying the following conditions:
\begin{itemize}
\item the entries increase weakly along rows,
\item the entries increase strictly along columns,
\item the entries in the $(i+m)$-th row must be greater than or equal to $\check{i}$,
\item the entry $\hat{i}$ can only appear in the first column of row $i+m-1$,
 and $\check{i}$ can only appear in the first column of row $i+m$, and either of them only appears if the other does,
\item if $\overline{i}$ appears in the first column of row $i+m$ and also $i$ appear in that row, then there is an
$\overline{i}$ immediately above this $i$.
\end{itemize}
The {\em weight} of such a tableau is given by
\begin{equation}
\woe T = \prod_{i=1}^{n-m} x_i^{n_i(T) - n_{\overline{i}}(T)}.
\end{equation}
The set of skew even orthogonal semistandard tableau of shape $\lambda / \mu$ filled with entries $\hat{1},\dots,n-m$ is denoted by $\EOT_{\lambda/\mu}$.
\end{defi}

\begin{example}
\label{eg:skew-oe-111/1}
Let $n=3$ and $m=1$ and consider the skew shape $(1,1,1)/(1)$.  Then we have the following tableaux in $\EOT_{(1,1,1)/(1)}$ with entries in $\{\hat{1}, \check{1}, \overline{1}, 1, \hat{2}, \check{2}, \overline{2}, 2\}$:
\begin{align}
\begin{ytableau}
\emptyset \\
\overline{1} \\
\overline{2}
\end{ytableau}
\quad
\begin{ytableau}
\emptyset \\
\overline{1} \\
2
\end{ytableau}
\quad
\begin{ytableau}
\emptyset \\
1 \\
\overline{2}
\end{ytableau}
\quad
\begin{ytableau}
\emptyset \\
1 \\
2
\end{ytableau}
\quad
\begin{ytableau}
\emptyset \\
\overline{2} \\
2
\end{ytableau}
\quad
\begin{ytableau}
\emptyset \\
\hat{2} \\
\check{2}
\end{ytableau}
\end{align}
Here $\emptyset$ means that the cell is unoccupied.
It follows that $\oe_{(1,1,1)/(1)}(x_1,x_2) = 2 + x_1 x_2 + x_1 \bar x_2 + \bar x_1 x_2 + \bar x_1 \bar x_2$.
\end{example}

\begin{thm}[{\cite[Proposition 4.3]{KoiTer90}}]
\label{thm:koi-ter-skew-oe}
Let $m,n,\lambda, \mu$ be as above such that $\mu$ and $\lambda$ are integer partitions and $\mu \subseteq \lambda$.
The skew even orthogonal character of the shape $\lambda / \mu$ is given by
\begin{equation}
\oe_{\lambda / \mu}(x_1,\dots,x_{n-m}) = \sum_{T \in \EOT_{\lambda/\mu}} \woe T.
\end{equation}
\end{thm}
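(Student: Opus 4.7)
The plan is to prove the theorem by induction on $n-m$, matching the iterated branching $O_{2k} \downarrow O_{2(k-1)} \times O_2$ (which defines the skew character representation-theoretically) with a block-by-block decomposition of the tableaux.

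For the base case $n - m = 1$, the alphabet of Definition~\ref{def:skew-oe-tableaux} reduces to $\{\hat{1}, \check{1}, \overline{1}, 1\}$. I would enumerate $\EOT_{\lambda/\mu}$ directly and check term by term that the generating function equals the $O_2$-character obtained from restricting $V_\lambda$ and extracting the $V_\mu$-isotypic component. Example~\ref{eg:skew-oe-111/1} is the prototype: the two ``plain'' tableaux of weight $1$ account for the two copies of the trivial $O_2$-representation appearing in the branching, while the four tableaux of weight $x_1^{\pm 1} x_2^{\pm 1}$ account for the two-dimensional $O_2$-representations.

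For the inductive step, I would construct a weight-preserving bijection
\begin{equation}
\EOT_{\lambda/\mu} \;\longleftrightarrow\; \bigsqcup_{\mu \subseteq \nu \subseteq \lambda} \EOT^{\leq n-m-1}_{\nu/\mu} \times \EOT^{= n-m}_{\lambda/\nu}
\end{equation}
that splits each $T$ at the natural index barrier: the cells carrying entries of index at most $n-m-1$ form a tableau $T'$ of shape $\nu/\mu$, and the remaining cells (which use only the top alphabet block) form $T''$ of shape $\lambda/\nu$. Every condition in Definition~\ref{def:skew-oe-tableaux} is local to a single index block — the constraint that row $i+m$ entries are $\geq \check{i}$, the first-column restrictions on $\hat{i},\check{i}$, and the $\overline{i}$-above-$i$ rule all compare rows $i+m-1$ and $i+m$ internal to one block — so the constraints decompose cleanly onto $T'$ and $T''$. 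Combining the inductive hypothesis for $T'$ with the base case for $T''$ yields
\begin{equation}
\sum_{T \in \EOT_{\lambda/\mu}} \woe T = \sum_{\mu \subseteq \nu \subseteq \lambda}\oe_{\nu/\mu}(x_1,\ldots,x_{n-m-1}) \; \oe_{\lambda/\nu}(x_{n-m}),
\end{equation}
which equals $\oe_{\lambda/\mu}(x_1,\ldots,x_{n-m})$ by coassociativity of the iterated restriction that defines the skew character.

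The main obstacle is the base case. The four-letter alphabet $\hat{i} < \check{i} < \overline{i} < i$, rather than the two-letter alphabet that sufficed in the symplectic case (Theorem~\ref{thm:koi-ter-skew-symp}), reflects the fact that $O_{2k} \downarrow O_{2(k-1)} \times O_2$ is \emph{not} multiplicity-free: when both $\nu_n$ and $\lambda_n$ are positive, certain $O_2$-summands appear with multiplicity two. The paired entries $\hat{i}, \check{i}$ — restricted to the first column of rows $i+m-1$ and $i+m$ and required to occur together — are designed to encode exactly this doubled multiplicity, while the rule forcing $\overline{i}$ immediately above $i$ in the first column prevents overcounting in the mixed case where both plain and ``spin-like'' contributions could coexist. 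Carefully matching these combinatorial rules with the branching multiplicities, particularly at the boundary cases where $\lambda_n = 0$ or $\mu_m = 0$ so that the $\hat{\phantom{i}}, \check{\phantom{i}}$ markers become forbidden, is the delicate part of the argument.
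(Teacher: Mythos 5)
First, a point of comparison: the paper does not prove this statement at all --- it is imported verbatim as \cite[Proposition 4.3]{KoiTer90}, and the paper's own contribution in this subsection begins only afterwards, with the pattern--tableau bijection of Proposition~\ref{prop:skew-even-orthog-patterns}. So your proposal is not an alternative to an argument in the paper; it is an attempt to reprove the cited result of Koike and Terada. Judged on its own terms, the inductive skeleton is sound: splitting $T\in\EOT_{\lambda/\mu}$ at the index barrier does produce a pair $(T',T'')$ of tableaux of complementary skew shapes, the conditions of Definition~\ref{def:skew-oe-tableaux} do restrict correctly to each block (the lower bound ``entries in row $i+m$ are $\geq \check{i}$'' even confines index-$i$ entries to rows $\le i+m$, so the intermediate shape $\nu$ is a genuine partition), and the resulting recursion is consistent with Example~\ref{eg:skew-oe-111/1}: one checks that $\oe_{(1,0)/(1)}(x_1)\,\oe_{(1,1,1)/(1,0)}(x_2)+\oe_{(1,1)/(1)}(x_1)\,\oe_{(1,1,1)/(1,1)}(x_2)=1\cdot 2+(x_1+\x_1)(x_2+\x_2)$, which agrees with $\oe_{(1,1,1)/(1)}(x_1,x_2)$.

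The gap is that the two inputs this recursion needs are exactly the two things you do not establish. The base case $n-m=1$ is not a finite check: for general $\lambda/\mu$ it \emph{is} the branching rule for $V_\lambda$ under $O_{2n}\supset O_{2n-2}\times O_2$, with the correct multiplicities for the full (disconnected) orthogonal group --- precisely where the factor $(1+[\lambda_n\neq 0])$ in the definition of $\oe_\lambda$, the multiplicity-two summands you propose to encode by the $\hat{i},\check{i}$ pairs, and the boundary cases $\lambda_n=0$ or $\mu_m=0$ all have to be reconciled. ``Enumerate and check term by term'' is not an argument here; this single-step rule is essentially the entire content of Koike--Terada's proposition. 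Second, the ``coassociativity'' identity $\oe_{\lambda/\mu}(x_1,\ldots,x_{n-m})=\sum_{\nu}\oe_{\nu/\mu}(x_1,\ldots,x_{n-m-1})\,\oe_{\lambda/\nu}(x_{n-m})$ is asserted, not proved: it requires knowing that the multiplicity spaces arising at each intermediate stage of the iterated restriction are again skew even orthogonal modules whose characters are of the form $\oe_{\nu/\mu}$, which is itself a nontrivial statement in Koike--Terada's universal-character framework and is sensitive to leaving the stable range (here $\lambda$ has the maximal number $n$ of rows). As written, the proposal reduces the theorem to two unproved claims, each of which is essentially equivalent to the theorem.
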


This theorem implies the following. Although this should be well-known as well, we again sketch the proof because we were unable to find an explicit reference.

\begin{prop}
\label{prop:skew-even-orthog-patterns}
Let $m,n,\lambda, \mu$ be as above and $\mu \subseteq \lambda$. Then
\begin{equation}
\oe_{\lambda/\mu}(x_1,\dots,x_{n-m}) = \sum_{P \in \OP_{\lambda/\mu}} \woe P.
\end{equation}
\end{prop}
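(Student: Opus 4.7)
Having the Koike--Terada Theorem~\ref{thm:koi-ter-skew-oe} in hand, the task reduces to producing a weight-preserving bijection between $\EOT_{\lambda/\mu}$ and $\OP_{\lambda/\mu}$. The template is exactly that of Proposition~\ref{prop:skew-symp-patterns}: read the tableau letter-by-letter to obtain a chain of shapes, and arrange these shapes as successive rows of a trapezoidal pattern. What has to be added is a mechanism for handling the two ``sign-carrying'' letters $\hat i,\check i$, which encode the signs of the odd starters of the pattern.

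\textbf{Step 1: the chain of shapes.} For $T\in\EOT_{\lambda/\mu}$ and each letter $\alpha$ in the alphabet
$\hat 1<\check 1<\overline 1<1<\hat 2<\check 2<\overline 2<2<\cdots<\widehat{n-m}<\widecheck{n-m}<\overline{n-m}<n-m$,
let $T_{\le\alpha}$ denote the subtableau of cells filled by entries $\le\alpha$ and $\nu^{(\alpha)}$ its shape. Weak increase along rows together with strict increase along columns forces each $\nu^{(\alpha)}/\nu^{(\alpha')}$ (for consecutive letters $\alpha'<\alpha$) to be a horizontal strip. Because the entries in row $i+m$ are bounded below by $\check i$, each such shape fits inside the trapezoidal region prescribed by $\mu$ and $\lambda$.

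\textbf{Step 2: packaging as a $(2n-1)/(2m-1)$-orthogonal pattern.} For each $i\in\{1,\dots,n-m\}$, the four letters $\hat i,\check i,\overline i,i$ produce rows $2i-3$, $2i-2$, $2i-1$ of the pattern as follows. The ``absolute values'' of the entries in these rows are the parts of the shapes $\nu^{(\check i)}$, $\nu^{(\overline i)}$, $\nu^{(i)}$ read in increasing order (with appropriate truncations so that the prescribed row lengths of Definition~\ref{trapez_evenortho} are met). The sign of the odd starter $P_{2i-1,1}$ is taken to be $-1$ precisely when $\check i$ (equivalently, by the pairing rule in Definition~\ref{def:skew-oe-tableaux}, when $\hat i$) appears in $T$, and $+1$ otherwise; positivity of all other entries is automatic because $\hat i,\check i$ are only permitted in the first column of a specified row. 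The column-strict condition, together with the ``only in the first column'' rule for $\hat i,\check i$, translates exactly into the weak monotonicity along $\nearrow$- and $\searrow$-diagonals plus the starter inequality $|P_{2i-1,1}|\le\min(P_{2i-2,1},P_{2i,1})$. The top rows of $P$ encode $\mu$ (with the appropriate sign if $\check 1$-type phenomena occur at the boundary with the deleted sub-pattern), and similarly at the bottom for $\lambda$.

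\textbf{Step 3: matching weights, and the main obstacle.} Once the bijection is defined, the weight identity reduces to showing that for each fixed $i$, the contribution $x_i^{n_i(T)-n_{\overline i}(T)}$ coming from the tableau equals $x_i^{\sgn(P_{2i-1,1})\sgn(P_{2i-3,1})(r^+_{2i-1}-2r^+_{2i-2}+r^+_{2i-3})}$ coming from the pattern. In the ``generic'' case where no $\hat i,\check i,\hat{i+1},\check{i+1}$ appear, both signs are $+1$ and this is a routine horizontal-strip computation that mirrors the straight case (Proctor) and the symplectic skew case of Proposition~\ref{prop:skew-symp-patterns}. The main obstacle is bookkeeping in the boundary cases: verifying that the last bullet of Definition~\ref{def:skew-oe-tableaux} (forcing an $\overline i$ on top of a first-column $i$) and the pairing rule for $\hat i,\check i$ correspond exactly to the right sign flips in the weight formula, and that the starter inequality of the pattern is equivalent to the combined ``first column only'' restrictions in the tableau. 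Modulo this careful case analysis, the construction is straightforwardly reversible and produces the required bijection.
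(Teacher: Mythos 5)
Your overall strategy (invoke Theorem~\ref{thm:koi-ter-skew-oe} and build a weight-preserving bijection between $\EOT_{\lambda/\mu}$ and $\OP_{\lambda/\mu}$ via chains of shapes) is the same as the paper's, but the one step that carries the actual content of the proof --- how the signs of the odd starters are encoded in the tableau --- is specified incorrectly. You declare that $\sgn(P_{2i-1,1})=-1$ precisely when $\hat i$ (equivalently $\check i$) appears in $T$. In the paper's bijection the negative sign does not signal the presence of $\hat i,\check i$; it signals that the roles of the two horizontal strips are \emph{swapped}: when $\sgn(P_{2i-3,1})\sgn(P_{2i-1,1})<0$ one fills $\lambda^{2i-2}/\lambda^{2i-3}$ with $i$ and $\lambda^{2i-1}/\lambda^{2i-2}$ with $\overline i$, and the letters $\hat i,\check i$ arise only in the further degenerate sub-case where this swap would place an $\overline i$ directly below an $i$ in the first column (elsewhere one just interchanges the two offending entries). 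The paper's own example shows a negative odd starter for $i=1$ whose tableau contains only $\overline 1$'s and a $1$ and no $\hat 1,\check 1$ at all, so your criterion misclassifies it. This also breaks Step 2 of your construction: in the swapped case the shape $\nu^{(\overline i)}$ of ``entries $\le\overline i$'' is \emph{not} row $2i-2$ of the pattern (that row is $\nu^{(\check i)}$ together with the cells containing $i$), so reading the rows off the chain $\nu^{(\check i)}\subseteq\nu^{(\overline i)}\subseteq\nu^{(i)}$ does not invert the paper's map and does not produce the correct absolute values.

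The weight verification is where this matters: in the swapped case $r^+_{2i-1}-2r^+_{2i-2}+r^+_{2i-3}$ computes $n_{\overline i}(T)-n_i(T)$, and it is exactly the factor $\sgn(P_{2i-1,1})\sgn(P_{2i-3,1})=-1$ that restores the exponent $n_i(T)-n_{\overline i}(T)$; with your sign rule this cancellation fails whenever the strips are swapped but no $\hat i,\check i$ occur. Since you defer precisely this bookkeeping to ``a careful case analysis'' while sketching it with the wrong mechanism, the proposal has a genuine gap at the heart of the argument rather than a merely cosmetic difference from the paper.
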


\begin{proof}
As in the proof of Proposition~\ref{prop:skew-symp-patterns}, we prove this by constructing a weight-preserving bijection
$\phi : \OP_{\lambda/\mu} \to \EOT_{\lambda/\mu}$.
Let $P \in \OP_{\lambda/\mu}$ and let $\lambda^i$ be the $i$-th row of $P$, written in decreasing order.
For all $1 \leq i \leq n-m$:
\begin{itemize}
\item If $\sgn(P_{2i-3,1}) \sgn(P_{2i-1,1}) > 0$, fill the cells of $\lambda^{2i-2}/\lambda^{2i-3}$ by $\overline{i}$
and those of $\lambda^{2i-1}/\lambda^{2i-2}$ by $i$.
\item If $\sgn(P_{2i-3,1}) \sgn(P_{2i-1,1}) < 0$, fill the cells of $\lambda^{2i-2}/\lambda^{2i-3}$ by $i$
and those of $\lambda^{2i-1}/\lambda^{2i-2}$ by $\overline i$. 
Suppose, at this stage, $\overline{i}$ occurs below $i$, in column $c$.
Then $i$ should be replaced by $\hat{i}$ and $\overline{i}$, by $\check{i}$, if $c =1$, and
these two entries should be interchanged if $c > 1$.
\end{itemize}

We now claim that $\phi(P)$ satisfies all the conditions in Definition~\ref{def:skew-oe-tableaux}. The first, second and fourth conditions are not difficult to prove. The entries $i$ and $\overline{i}$ are filled in $\phi(P)$ only while parsing rows $2i-2$ and $2i-1$
of $P$. The lengths of these rows are $m+i-1$ and $m+i$ respectively. Therefore, these entries cannot occur after row $m+i$.
This proves the third condition. Now, suppose $\phi(P)_{i+m,1} = \overline{i}$, then $\sgn(P_{2i-3,1}) \sgn(P_{2i-1,1}) < 0$ and cell $(i+m,1)$ is contained in $\lambda^{2i-1}/\lambda^{2i-2}$.
If, moreover, $\phi(P)_{i+m,j} = i$ for some $j > 1$, and $\phi(P)_{i+m-1,j} \neq \overline{i}$, that means cell $(i+m,j)$ 
is contained in $\lambda^{2i-2}/\lambda^{2i-3}$, which is clearly impossible. Hence, the fifth condition  holds. It is not difficult to construct the inverse map.
\end{proof}

We illustrate Proposition~\ref{prop:skew-even-orthog-patterns} with the following example.

\begin{example}
The bijection $\phi$ maps the $9/3$-orthogonal pattern on the left onto the skew even orthogonal tableau on the right: 
\begin{equation}
\begin{array}{ccccccccc}
1 && 2 \\
& 1 && 3 \\
-1 && 2 && 3 \\
& 1 && 2 && 4 \\
0 && 1 && 3 && 4 \\
& 1 && 2 && 4 && 4 \\
-1 && 1 && 2 && 4 && 4
\end{array}
\overset{\phi}{\longrightarrow}
\raisebox{1.0cm}{
\begin{ytableau}
\none & \none & 1 & 2 \\
\none & \overline{1} & \overline{2} & 3 \\
\overline{1} & 3 \\
\hat{3}  \\
\check{3}
\end{ytableau}
}
\end{equation}
\end{example}

\subsubsection{Skew odd orthogonal characters}
In this section, we assume $m < n$ to be non-negative integers, and $\mu, \lambda$ to be integer partitions or half-integer partitions with $m$ and $n$ parts, respectively. The proof of Theorem~\ref{skew_theo} suggests the following definition, which is a generalization of Definition~\ref{defi_splitpatterns}.

\begin{defi}[Trapezoidal split orthogonal patterns]
Let $m, n, \mu, \lambda$ be as above.
\begin{enumerate}
\item A $(2n)/(2m)$-split orthogonal pattern has the shape of a $(2n)$-split orthogonal pattern with a 
$2m$-split orthogonal pattern deleted from the top, except for the bottom row of the $2m$-split orthogonal pattern that remains, such that the following conditions are satisfied:
\begin{itemize}
\item the entries except for the odd starters are either all integers are all half-integers,
\item the entries are non-negative, 
\item the entries weakly increase along $\nearrow$-diagonals and $\searrow$-diagonals.
\end{itemize}
\item The rows are indexed from $0$ to $2n-2m$, starting with the top, and $r_i$ is the sum of entries in row $i$. 
The weight of a pattern is then 
\begin{equation}
\woo P = \prod_{i=1}^{n-m} x_i^{r_{2i}-2r_{2i-1}+r_{2i-2}}.
\end{equation}
\item Denote the set of $(2n)/(2m)$-split orthogonal patterns with top row $\mu$ and bottom row $\lambda$, both written in increasing order, as 
$\SOP_{\lambda/\mu}$.
\end{enumerate}
\end{defi}

For integer partitions $\lambda, \mu$, the odd orthogonal character $\oo_{\lambda/\mu}(x_1,\ldots,x_{n-m})$ appearing in Theorem~\ref{skew_theo} is the generating function 
\begin{equation}
\sum_{P \in \SOP_{\lambda/\mu}} \woo P.
\end{equation}
The symbols \musFlat{}$_i$, $i, \overline{i}$ in \cite{KoiTer90} correspond to our symbols 
$\hat{i}, \overline{i}, i$ respectively. 

\begin{defi}
\label{def:skew-oo-tableaux}
Let $m,n,\lambda, \mu$ be as above and 
$\mu \subseteq \lambda$. 
A {\em skew odd orthogonal semistandard tableau $T$ of shape $\lambda / \mu$ with entries}
\begin{equation}
\hat{1} < \overline{1} < 1 < \hat{2} < \overline{2} < 2 < \cdots < \widehat{n-m} < \overline{n-m} < n-m,
\end{equation}
is a filling of $\lambda / \mu$ with these entries satisfying the following conditions:
\begin{itemize}
\item the entries increase weakly along rows,
\item the entries increase strictly along columns,
\item the entries in the $(i+m)$-th row must be greater than or equal to $\widehat{i}$,
\item $\hat{i}$ can only appear in the first column of row $i+m$. 
\end{itemize}
The weight of such a tableau is given by
\begin{equation}
\woo T = \prod_{i=1}^{n-m} x_i^{n_i(T) - n_{\overline{i}}(T)}.
\end{equation}
Let $\OOT_{\lambda/\mu}$ be the set of skew odd orthogonal semistandard tableau of shape $\lambda / \mu$ filled with entries $\hat{1},\dots,n-m$.
\end{defi}

\begin{example}
\label{eg:skew-oo-111/1}
Let $n=3$ and $m=1$ and consider the skew shape $(1,1,1)/(1)$.  Then we have the following tableaux in $\OOT_{(1,1,1)/(1)}$ with entries in $\{\hat{1}, \overline{1}, 1, \hat{2}, \overline{2}, 2\}$:
\begin{align}
\begin{ytableau}
\emptyset \\
\widehat{1} \\
\widehat{2}
\end{ytableau}
\quad
\begin{ytableau}
\emptyset \\
\widehat{1} \\
\overline{2}
\end{ytableau}
\quad
\begin{ytableau}
\emptyset \\
\widehat{1} \\
2
\end{ytableau}
\quad
\begin{ytableau}
\emptyset \\
\overline{1} \\
\widehat{2}
\end{ytableau}
\quad
\begin{ytableau}
\emptyset \\
\overline{1} \\
\overline{2}
\end{ytableau}
\quad
\begin{ytableau}
\emptyset \\
\overline{1} \\
2
\end{ytableau}
\quad
\begin{ytableau}
\emptyset \\
1 \\
\widehat{2}
\end{ytableau}
\quad
\begin{ytableau}
\emptyset \\
1 \\
\overline{2}
\end{ytableau}
\quad
\begin{ytableau}
\emptyset \\
1 \\
2
\end{ytableau}
\quad
\begin{ytableau}
\emptyset \\
\overline{2} \\
2
\end{ytableau}
\end{align}
Here, $\emptyset$ means that the cell is unoccupied.
It follows that 
\begin{equation}
\oo_{(1,1,1)/(1)}(x_1,x_2) = \left(1 + \bar x_1 + x_1 \right)
\left(1 + \bar x_2 + x_2 \right) + 1.
\end{equation}
\end{example}

\begin{thm}[{\cite[Proposition 4.2]{KoiTer90}}]
\label{thm:koi-ter-skew-oo}
Let $m,n,\lambda, \mu$ be as above such $\mu, \lambda$ are integer partitions and 
$\mu \subseteq \lambda$.
The skew odd orthogonal character of the shape $\lambda / \mu$ is given by
\begin{equation}
\oo_{\lambda / \mu}(x_1,\dots,x_{n-m}) = \sum_{T \in \OOT_{\lambda/\mu}} \woo T.
\end{equation}
\end{thm}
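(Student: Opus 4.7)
Paralleling the proofs of Propositions~\ref{prop:skew-symp-patterns} and \ref{prop:skew-even-orthog-patterns}, I would establish the theorem by constructing a weight-preserving bijection $\phi\colon \SOP_{\lambda/\mu}\to\OOT_{\lambda/\mu}$. Since the paper takes $\oo_{\lambda/\mu}(x_1,\ldots,x_{n-m})$ to be the generating function $\sum_{P\in\SOP_{\lambda/\mu}}\woo P$, such a bijection immediately yields the stated identity; the agreement between this combinatorial definition and the representation-theoretic character follows from a standard (and tangential) iterated branching rule argument.

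Construction of $\phi$: given $P\in\SOP_{\lambda/\mu}$, form a chain of integer partitions $\mu=\lambda^0\subseteq\lambda^1\subseteq\cdots\subseteq\lambda^{2(n-m)}=\lambda$ by reading each row of $P$ in decreasing order, where every half-integer starter $P_{2i-1,1}=k+\tfrac{1}{2}$ is rounded \emph{up} to $k+1$ (the non-starter entries are already integers when $\lambda$ is an integer partition). For $i=1,\ldots,n-m$, fill the cells of $\lambda^{2i-1}/\lambda^{2i-2}$ with $\overline{i}$ and those of $\lambda^{2i}/\lambda^{2i-1}$ with $i$; then, whenever $P_{2i-1,1}$ is a half-integer, overwrite the label of the cell $(i+m,1)$, which by the round-up lies in $\lambda^{2i-1}/\lambda^{2i-2}$ and so currently carries an $\overline{i}$, with $\hat{i}$. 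The inverse reads the tableau: $\overline{i}$- and $\hat{i}$-cells go into $\lambda^{2i-1}/\lambda^{2i-2}$, $i$-cells go into $\lambda^{2i}/\lambda^{2i-1}$, and the starter $P_{2i-1,1}$ is set to $\lambda^{2i-1}_{m+i}-\tfrac{1}{2}$ if $\hat{i}$ occurs in $T$ (and to $\lambda^{2i-1}_{m+i}$ otherwise).

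The semistandard conditions of $\OOT_{\lambda/\mu}$ follow from the $\nearrow$- and $\searrow$-monotonicity of $P$, while the boundary and placement conditions on $\hat{i}$ follow from the trapezoidal row-length sequence $m,m+1,m+1,\ldots$ and the fact that only the leftmost entry of an odd row can be a half-integer. Weight preservation reduces to the observation that each half-integer starter reduces the exponent $r_{2i}-2r_{2i-1}+r_{2i-2}$ of $x_i$ by exactly $1$ compared with its rounded-up integer value, which is precisely the effect of converting $\overline{i}$ to $\hat{i}$ on $n_i(\phi(P))-n_{\overline{i}}(\phi(P))$. The main obstacle I anticipate is checking that the chain containments $\lambda^{2i-2}\subseteq\lambda^{2i-1}\subseteq\lambda^{2i}$ survive the round-up of half-integer starters, which reduces to the interleaving inequalities $P_{2i-1,\ell}\leq P_{2i-2,\ell}\leq P_{2i-1,\ell+1}$ (and the analogous inequalities between rows $2i-1$ and $2i$), combined with the observation that whenever $P_{2i-1,1}$ is a half-integer, the adjacent integer entries in rows $2i-2$ and $2i$ must exceed it by at least $\tfrac{1}{2}$, hence by at least $1$ after rounding up. Checking against the ten patterns/tableaux of the case $(n,m,\lambda,\mu)=(3,1,(1,1,1),(1))$ in Example~\ref{eg:skew-oo-111/1} provides a reliable sanity check.
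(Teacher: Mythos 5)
The paper offers no proof of this statement at all: it is imported verbatim from Koike and Terada \cite[Proposition 4.2]{KoiTer90}, where $\oo_{\lambda/\mu}$ is the \emph{representation-theoretically} defined skew character, obtained by restricting the straight odd orthogonal character to the relevant subgroup. So there is no internal argument to match your proposal against. What your proposal actually constructs is the weight-preserving bijection $\phi\colon \SOP_{\lambda/\mu}\to\OOT_{\lambda/\mu}$ (round up half-integer starters, fill $\lambda^{2i-1}/\lambda^{2i-2}$ with $\overline{i}$ and $\lambda^{2i}/\lambda^{2i-1}$ with $i$, convert the $\overline{i}$ in cell $(m+i,1)$ to $\hat{i}$ exactly when the starter of row $2i-1$ is a half-integer), and this coincides step for step with the paper's proof of Proposition~\ref{prop:skew-odd-orthog-patterns}. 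But that proposition is a different statement, and in the paper's logic it \emph{consumes} Theorem~\ref{thm:koi-ter-skew-oo} rather than establishing it: the bijection gives ``pattern generating function $=$ tableau generating function,'' and Koike--Terada supply ``tableau generating function $=$ character.''

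The gap is therefore logical rather than combinatorial. By declaring that the paper ``takes $\oo_{\lambda/\mu}$ to be $\sum_{P\in\SOP_{\lambda/\mu}}\woo P$,'' you turn the theorem into a tautological consequence of your bijection and push its entire content --- the identification of either generating function with the restricted character --- into the clause ``follows from a standard (and tangential) iterated branching rule argument.'' That clause is precisely the substance of \cite[Proposition 4.2]{KoiTer90}: one has to produce the restriction/branching rule and verify that its multiplicities are enumerated by the tableaux of Definition~\ref{def:skew-oo-tableaux}, and nothing in your write-up does this, nor does the paper (which is why the result carries a citation rather than a proof). Two smaller points. First, the sign bookkeeping in your weight-preservation paragraph is reversed: a half-integer starter in row $2i-1$ makes $r_{2i-1}$ smaller by $\tfrac{1}{2}$ and hence \emph{increases} the exponent $r_{2i}-2r_{2i-1}+r_{2i-2}$ by one relative to the rounded-up value, which matches the \emph{increase} of $n_i(T)-n_{\overline{i}}(T)$ caused by turning an $\overline{i}$ into an $\hat{i}$; the two effects do agree, but not with the signs as you state them. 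Second, the containment worry you flag at the end is genuine but is already dispatched by your own observation, together with the fact that the cell created by a rounded-up starter lies in a row where $\lambda^{2i-2}$ has no parts, so only the comparison with the integer entry $P_{2i,1}\geq P_{2i-1,1}$ matters.
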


\begin{prop}
\label{prop:skew-odd-orthog-patterns}
Let $m,n,\lambda, \mu$ be as above such $\mu, \lambda$ are integer partitions and 
$\mu \subseteq \lambda$. Then
\begin{equation}
\oo_{\lambda/\mu}(x_1,\dots,x_{n-m}) = \sum_{P \in \SOP_{\lambda/\mu}} \woo P.
\end{equation}
\end{prop}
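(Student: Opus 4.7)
The plan is to imitate the proof of Proposition~\ref{prop:skew-symp-patterns}, with an additional twist to handle the half-integer odd starters of the split orthogonal patterns, which will correspond to the $\hat i$ symbols in the odd orthogonal tableaux. By Theorem~\ref{thm:koi-ter-skew-oo} it suffices to construct a weight-preserving bijection $\phi : \SOP_{\lambda/\mu} \to \OOT_{\lambda/\mu}$; since $\mu$ is an integer partition, its presence in row $0$ of any $P \in \SOP_{\lambda/\mu}$ forces all non-starter entries of $P$ to be integers, while each odd starter may independently be a non-negative integer or a non-negative half-integer.

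Given $P \in \SOP_{\lambda/\mu}$, let $\epsilon_i \in \{0,1\}$ record whether $P_{2i-1,1}$ is a half-integer, and let $Q$ be obtained from $P$ by adding $\tfrac{1}{2}$ to each half-integer starter. The diagonal inequalities are preserved, because the neighbors $P_{2i-2,1}$ and $P_{2i,1}$ of each odd starter are integers, and whenever $P_{2i-1,1}$ is a half-integer each of these neighbors must exceed it by at least $\tfrac{1}{2}$. Hence $Q \in \SP_{\lambda/\mu}$. Applying the bijection of Proposition~\ref{prop:skew-symp-patterns} to $Q$ yields a skew symplectic tableau $T'$, and we define $\phi(P)$ to be the filling obtained from $T'$ by replacing, for each $i$ with $\epsilon_i = 1$, the entry at position $(m+i,1)$ by $\hat i$. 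This replacement always makes sense: $\epsilon_i=1$ forces $Q_{2i-1,1}\ge 1$, so the cell $(m+i,1)$ lies in $\lambda^{2i-1}/\lambda^{2i-2}$ and is therefore filled with $\bar i$ in $T'$.

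The principal step is to verify that $\phi(P) \in \OOT_{\lambda/\mu}$, and the only delicate issue is column-strictness at the substituted cell (recall that $\hat i < \bar i$ in the alphabet). The only way strictness could fail is if $(m+i-1,1)$ already contained $\bar i$ in $T'$, which would require simultaneously $P_{2i-2,1}=0$ (so that $(m+i-1,1)$ lies in $\lambda^{2i-1}/\lambda^{2i-2}$) and $\epsilon_i=1$ (so that $P_{2i-1,1} \ge \tfrac{1}{2}$), contradicting the diagonal inequality $P_{2i-1,1} \le P_{2i-2,1}$. The remaining conditions of Definition~\ref{def:skew-oo-tableaux}, including that $\hat i$ appears only at $(m+i,1)$, are immediate from the construction and the symplectic case.

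Weight preservation is a short calculation: rounding up the half-integer starters decreases the exponent of $x_i$ in the weight by $\epsilon_i$, so $\ws Q = \prod_i x_i^{-\epsilon_i}\,\woo P$; symmetrically, replacing a $\bar i$ by $\hat i$ (which contributes nothing to $\woo{\cdot}$) decreases $n_{\bar i}$ by $\epsilon_i$, so $\woo{\phi(P)} = \prod_i x_i^{+\epsilon_i}\,\ws{T'}$. Combining these with $\ws{T'} = \ws Q$ from Proposition~\ref{prop:skew-symp-patterns} gives $\woo{\phi(P)} = \woo P$. The inverse map is transparent: read off $(\epsilon_i)$ from the positions of the $\hat i$'s in $T$, replace every $\hat i$ by $\bar i$, invert the skew symplectic bijection to recover $Q$, and finally subtract $\tfrac{1}{2}$ from $Q_{2i-1,1}$ whenever $\epsilon_i=1$.
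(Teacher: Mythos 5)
Your proof is correct and follows essentially the same route as the paper: the paper's bijection also builds the chain of partitions from the rows with odd starters rounded up, fills the skew strips $\lambda^{2i-1}/\lambda^{2i-2}$ and $\lambda^{2i}/\lambda^{2i-1}$ with $\overline{i}$ and $i$, and marks a half-integer starter in row $2i-1$ by replacing the first entry of row $m+i$ with $\hat{i}$ — which is exactly your composite through Proposition~\ref{prop:skew-symp-patterns}. Your write-up merely makes the factorization through the symplectic case explicit and supplies the column-strictness and weight checks that the paper leaves to the reader.
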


\begin{proof}
We construct a weight-preserving bijection $\phi: \SOP_{\lambda/\mu} \to \OOT_{\lambda/\mu}$. The strategy is very similar to the proof of Proposition~\ref{prop:skew-symp-patterns}.

Let $P \in \SOP_{\lambda/\mu}$. The $i$-th row of $P$, when sorted in weakly decreasing order is a partition $\lambda_i$, where 
odd starters are rounded up.
The interlacing conditions ensure that $\lambda_{i-1} \subseteq \lambda_{i}$, with $\lambda_0 = \mu$ and $\lambda_{2n-2m} = \lambda$. We now construct a tableau $T$ of shape $\lambda/\mu$ as follows: 
\begin{itemize}
\item Fill the entries of $\lambda_{2i-1}/\lambda_{2i-2}$ by $\overline{i}$ for $1 \leq i \leq n-m$. If the starter in row $2i-1$ is a half-integer, replace the first entry in row $m+i$ by $\widehat{i}$.
\item Fill the entries of $\lambda_{2i}/\lambda_{2i-1}$ by $i$ for $1 \leq i \leq n-m$.
\end{itemize}

See Example~\ref{eg:skew-odd-orthog-pattern} for an example of this map.
It is easily seen that the map is weight-preserving and the inverse map is easily constructed as well, establishing the result.
\end{proof}

\begin{example}
\label{eg:skew-odd-orthog-pattern}
The bijection $\phi$ of Proposition~\ref{prop:skew-odd-orthog-patterns} maps the $10/4$-split orthogonal pattern on the left onto the skew odd orthogonal tableau on the right: 
\begin{equation}
\begin{array}{cccccccccc}
& 1 && 2 \\
0 && 2 && 2 \\
& 2 && 2 && 3 \\
\frac{3}{2} && 2 && 3 && 3 \\
& 2 && 2 && 3 && 4 \\
1 && 2 && 2 && 3 && 4 \\
& 2 && 2 && 2 && 3 && 4
\end{array}
\overset{\phi}{\longrightarrow}
\raisebox{1.0cm}{
\begin{ytableau}
\none & \none & 1 & 2 \\
\none & \overline{1} & \overline{2} \\
1 & 1 \\
\hat{2} & \overline{2}  \\
\overline{3} & 3
\end{ytableau}
}
\end{equation}
 with weight $x_1^2 \x_2^1 x_3^0$.
\end{example}

\section*{Acknowledgements}
We thank R. Behrend for useful discussions. We also thank the anonymous referees for very useful comments and references.
We acknowledge the hospitality of the Institut Mittag Leffler where part of this was done.
Arvind Ayyer was partially supported by UGC centre for Advanced Study grant and by Department of Science and Technology grant EMR/2016/006624. Ilse Fischer
 acknowledges support from the Austrian Science Foundation FWF, START grant Y463 and SFB grant F50.

\bibliography{asmpp}
\bibliographystyle{plain}

\end{document}